\documentclass[10pt,twoside, a4paper, english, reqno]{amsart}
\usepackage[dvips]{epsfig}
\usepackage{amscd}
\usepackage{a4wide}
\usepackage{amssymb}
\usepackage{amsthm}
\usepackage{amsmath}
\usepackage{latexsym}

\usepackage{upref}
\usepackage[colorlinks,backref]{hyperref}
\usepackage{color}
\usepackage[usenames,dvipsnames]{xcolor}

\usepackage{pdfsync}

\setlength{\topmargin}{-.5cm}
\setlength{\textheight}{23cm}
\setlength{\evensidemargin}{0cm}
\setlength{\oddsidemargin}{0cm}
\setlength{\textwidth}{16cm}
\theoremstyle{plain}
\newtheorem{thm}{Theorem}[section]
\theoremstyle{plain}
\newtheorem{lem}[thm]{Lemma}
\newtheorem{prop}[thm]{Proposition}

\theoremstyle{definition}
\newtheorem{defi}{Definition}[section]
\newtheorem{rem}{Remark}[section]

\newtheorem*{maintheorem*}{Main Theorem}
\newtheorem*{maincorollary*}{Main Corollary}

\newenvironment{Assumptions}
{
\setcounter{enumi}{0}

\begin{enumerate}}
{\end{enumerate} }

\newenvironment{Assumptions2}
{
\setcounter{enumi}{0}

\begin{enumerate}}
{\end{enumerate} }

\def\cprime{$'$}
\newcommand{\norm}[1]{\left\|#1\right\|}

\newcommand{\con} {\ast}
\newcommand{\R}{\ensuremath{\mathbb{R}}}

\newcommand{\rd}{\ensuremath{\mathbb{R}^d}}
\newcommand{\supp}{\ensuremath{\mathrm{supp}\,}}
\newcommand{\goto}{\ensuremath{\rightarrow}}
\newcommand{\grad}{\ensuremath{\nabla}}
\newcommand{\eps}{\ensuremath{\varepsilon}}

\newcommand{\E}{\mathbb{E}}

\numberwithin{equation}{section} \allowdisplaybreaks

\title[Continuous dependence]
{Continuous dependence estimate for a degenerate parabolic-hyperbolic equation with L\'{e}vy noise}

\date{\today}


\keywords{Degenerate Parabolic-Hyperbolic Equation; L\'{e}vy Noise; Young measures; Rate of Convergence; Fractional BV Estimates.}

\thanks{Supported in part by Indo-French Centre for Applied Mathematics (IFCAM) and Institute for the Sustainable Engineering of Fossil Resources (ISIFoR)}

\author[Ujjwal Koley]{Ujjwal Koley}
\address[Ujjwal Koley]{\newline
	Centre for Applicable Mathematics,
	Tata Institute of Fundamental Research,
	P.O.\ Box 6503, GKVK Post Office,
	Bangalore 560065, India}
\email[]{ujjwal@math.tifrbng.res.in}

\author[Ananta K. Majee]{Ananta K. Majee}
\address[Ananta K. Majee]{\newline
	Mathematisches Institut,
	Universit\"{a}t T\"{u}bingen,
	Auf der Morgenstelle 10, D-72076 T\"{u}bingen, Germany}
\email[]{majee@na.uni-tuebingen.de}

\author[Guy Vallet]{Guy Vallet}
\address[Guy Vallet]{\newline 
	LMAP UMR- CNRS 5142, IPRA BP 1155, 64013 Pau Cedex, France}
\email[]{guy.vallet@univ-pau.fr}

\begin{document}
\begin{abstract}
In this article, we are concerned with a multidimensional degenerate parabolic-hyperbolic equation driven by L\'{e}vy processes. Using bounded variation (BV) estimates for vanishing viscosity approximations, we derive an explicit continuous dependence estimate on the nonlinearities of the entropy solutions under the assumption that L\'{e}vy noise depends only on the solution. This result is used to show the error estimate for the stochastic vanishing viscosity method. In addition, we establish fractional BV estimate for vanishing viscosity approximations in case the noise coefficients depend on both the solution and spatial variable.
\end{abstract}
\maketitle
\tableofcontents
\section{Introduction}
The last couple of decades have witnessed remarkable advances on the larger area of stochastic partial differential equations that are driven by L\'{e}vy noise. An worthy reference on this subject is \cite{peszat}. However, very little is available on the specific problem of degenerate parabolic-hyperbolic equation with L\'{e}vy noise, and there are still a number of issues waiting to be explored. In this paper, we aim at deriving continuous dependence estimates based on nonlinearities for stochastic degenerate parabolic-hyperbolic equation driven by multiplicative L\'{e}vy noise. A formal description of our problem requires a filtered probability space $\big(\Omega, \mathbb{P}, \mathcal{F}, \{\mathcal{F}_t\}_{t\ge 0} \big)$, and we are interested in an $L^2$-valued predictable process $u(t,\cdot)$
which satisfies the following Cauchy problem
\begin{equation}
\label{eq:stoc_con_brown}
\begin{cases} 
du(t,x)- \mbox{div} f(u(t,x)) \,dt-\Delta A(u(t,x))\,dt =
\sigma(u(t,x))\,dW(t) + \int_{|z|>0} \eta(u(t,x);z)\widetilde{N}(dz,dt), & x \in \Pi_T, \\
u(0,x) = u_0(x), &  x\in \R^d,
\end{cases}
\end{equation}
where $\Pi_T= \R^d \times (0,T)$ with $T>0$ fixed, $u_0:\R^d \mapsto \R$ is the given initial
function, $f:\R \mapsto \R^d$ is a given (sufficiently smooth) scalar valued flux function 
(see Section~\ref{sec:tech} for the complete list of assumptions), and $A:\R \mapsto \R$ is a given nonlinear diffusion. Regarding this, the basic assumption is that $A(\cdot)$ is nondecreasing with $A(0)=0$. Moreover, \eqref{eq:stoc_con_brown} is allowed to be strongly degenerate in the sense that $A^{\prime}(\cdot)$ is allowed to be zero on an interval, see \cite{carrillo_1999}. 
Furthermore, $W(t)$ is a real valued Brownian noise and $ \widetilde{N}(dz,dt)= N(dz,dt)-m(dz)\,dt $, 
where $N$ is a Poisson random measure on $\R\times (0,\infty)$ with intensity measure $m(dz)$, a Radon measure on $\R \setminus \lbrace0\rbrace$
with a possible singularity at $z=0$ satisfying $\int_{|z|>0} (1\wedge |z|^2)\,m(dz) <  + \infty$.\footnote{Here we denote $ x \wedge y :=\min{\lbrace x,y \rbrace}$}
Finally, $ u \mapsto \sigma (u)$ and $(u,z)\mapsto \eta(u,z)$ are given real valued functions signifying
the multiplicative nature of the noise.

The equation \eqref{eq:stoc_con_brown} could be viewed as a stochastic perturbation of parabolic-hyperbolic equation. Equations of this type model the phenomenon of convection-diffusion of ideal 
fluids and therefore arise in a wide variety of important applications, including for instance two or three phase flows in porous media \cite{EspedalKarlsen} or sedimentation-consolidation processes \cite{Bustosetalbook}.
In the case $\sigma=\eta=A=0$, the equation \eqref{eq:stoc_con_brown} becomes 
a standard conservation laws in $\R^d$.
For the conservation laws, the question of existence and uniqueness of solutions  
was first settled in the pioneer papers of Kru\v{z}kov \cite{Kruzkov} and Vol'pert \cite{Volpert}. 
In the case $\sigma=\eta=0$, the equation \eqref{eq:stoc_con_brown} becomes 
a degenerate parabolic-hyperbolic equation in $\R^d$. For degenerate parabolic-hyperbolic equations entropy solution were first considered by Vol'pert and Hudajev \cite{VolpertHudajev}, while uniqueness of entropy solutions was first proved by
Carrillo \cite{carrillo_1999}. A number of authors have contributed since then, and we mention the
works of Andreianov $\&$ Maliki \cite{boris_2010}, Cockburn et al. \cite{cockburn}, Bendahmane $\&$ Karlsen \cite{mostafakarlsen_2004, mostafakarlsen_2005},
Evje et al. \cite{karl-resibro-2000} and Vallet \cite{Vallet_2005}.

\subsection{Stochastic Balance Laws}   
The study of stochastic balance laws has so far been limited mostly to equations of the type \eqref{eq:stoc_con_brown} with $A=0$. 
In fact, Kim \cite{KIm2005} extended the Kru\v{z}kov well-posedness theory to one dimensional balance laws that are driven by 
additive Brownian noise, and Vallet \& Wittbold \cite{Vallet_2009} to the multidimensional Dirichlet problem. However, when the noise 
is of multiplicative nature, one could not apply a straightforward Kru\v{z}kov's doubling method to get uniqueness. 
This issue was settled by Feng $\&$ Nualart \cite{nualart:2008}, who established uniqueness of entropy solution by 
recovering additional information from the vanishing viscosity method. The existence was proven using stochastic 
version of compensated compactness method and it was valid for \emph{one} spatial dimension.
To overcome this problem, Debussche $\&$ Vovelle \cite{Vovelle2010} introduced
kinetic formulation of such problems and as a result they were able to establish the well-posedness
of multidimensional stochastic balance law via \emph{kinetic} approach. 
A number of authors have contributed since then, and we mention the works of 
Bauzet et al. \cite{BaVaWit_2012,BaVaWit_JFA}, Biswas et al. \cite{BisMaj,BisMajKarl_2014}.
We also mention works by Chen et al. \cite{Chen-karlsen_2012}, and Biswas et al. \cite{BisKoleyMaj}, where well-posedness of entropy solution is established in $L^p \cap BV$, via BV framework. 
Moreover, they were able to develop continuous dependence theory for multidimensional balance laws and as a by product they derived an explicit \emph{convergence rate} of the approximate solutions to the underlying problem.

\subsection{Degenerate Stochastic Balance Laws}   
Stochastic degenerate parabolic-hyperbolic equations are one of the most important classes of nonlinear stochastic PDEs. Nonlinearity and degeneracy are two main features of these equations and yields several striking phenomena. In fact, due to strong degeneracy, one cannot expect smooth solutions even if the initial data is smooth. Existence and uniqueness of solutions of \eqref{eq:stoc_con_brown} was first settled by Bauzet et al. \cite{BaVaWit_2014} (in the case of $\eta=0$), and by Biswas et al. \cite{BisMajVal} (in the case of $\eta\neq0$). In fact, they have extended their previous works (\cite{BaVaWit_2012} and \cite{BisMajKarl_2014}, respectively) to the context of degenerate parabolic-hyperbolic problem in the spirit of Carrillo's work \cite{carrillo_1999}. The existence of solution is proved by using a vanishing viscosity method, based on the compactness proposed by the theory of Young measures. The uniqueness of the solution is obtained via Kruzkov's doubling variable method.
We also mention the work of Debussche et al. \cite{martina}, where the authors have established the well-posedness theory for solutions of the Cauchy problem \eqref{eq:stoc_con_brown} in any space dimension. They have adapted the notion of kinetic formulation and kinetic solution which has
already been studied in the case of hyperbolic scalar conservation laws in both
deterministic \cite{lions} and stochastic setting \cite{Vovelle2010}.

Independently of the smoothness of the initial data $u_0$, due to the presence of nonlinear flux term, degenerate diffusion term, and a nonlocal term in equation \eqref{eq:stoc_con_brown}, solutions to \eqref{eq:stoc_con_brown} are not necessarily smooth and weak solutions must be sought. Before introducing the concept of weak solutions, we first recall the notion of predictable $\sigma$-field. By a predictable $\sigma$-field on $[0,T]\times \Omega$, denoted
by $\mathcal{P}_T$, we mean that the $\sigma$-field generated by the sets of the form: $ \{0\}\times A$ and $(s,t]\times B$  for any $A \in \mathcal{F}_0; B \in \mathcal{F}_s,\,\, 0<s,t \le T$.
The notion of stochastic weak solution is defined as follows:
 
\begin{defi}[Stochastic weak solution]\label{defi:weak-solution}
 A square integrable $ L^2(\R^d )$-valued $\{\mathcal{F}_t: t\geq 0 \}$-predictable stochastic process $u(t)= u(t,x)$ is said to be a weak solution
 	to our problem \eqref{eq:stoc_con_brown} provided 
 \begin{itemize}
 \item [(i)]  $u \in L^2(\Omega \times \Pi_T)$ and $A(u)\in L^2((0,T)\times \Omega; H^1(\R^d))$.
 \item[(ii)] $\frac{\partial}{\partial t} \Big[u-\int_0^t \sigma(u(s,\cdot))\,dW(s)- \int_0^t \int_{|z|>0}\eta(u(s,\cdot);z)\,\widetilde{N}(dz,ds)\Big]
 \in L^2((0,T)\times \Omega; H^{-1}(\R^d))$ in the sense of distribution.
 \item[(iii)] For almost every $t\in [0,T]$ and $ P-$ a.s, the following variational formulation holds: 
 \begin{align}
 &\Big\langle \frac{\partial}{\partial t} \Big[u-\int_0^t  \sigma(u(s,\cdot)) \,dW(s)-\int_0^t \int_{|z|>0}\eta(u(s,\cdot);z)\,\widetilde{N}(dz,ds)\Big],v \Big\rangle_{\big({H^{-1}(\R^d),H^1(\R^d)} \big)}\notag \\
 & \qquad \quad + \int_{\R^d} \Big\{ \grad A(u(t,x)) + f(u(t,x))\Big\}.\grad v\,dx=0,
 \end{align}
for any $v \in H^1(\R^d)$. 
 \end{itemize}
 \end{defi}


However, it is well-known that weak solutions may be discontinuous and they are not uniquely determined by their initial data. Consequently, an
admissible condition so called {\em entropy solution}  must be imposed to single out the physically correct solution. Since the notion of
entropy solution is built around the so called entropy flux triple, we begin with the definition of entropy flux triple.

\begin{defi}[Entropy flux triple]
 	A triplet $(\beta,\zeta,\nu) $ is called an entropy flux triple if $\beta \in C^2(\R) $ and $\beta \ge0$,
 	$\zeta = (\zeta_1,\zeta_2,....\zeta_d):\R \mapsto \R^d$ is a vector valued function, and $ \nu :\R \mapsto \R $ is a scalar valued function such that 
 	\[\zeta'(r) = \beta'(r)f'(r) \quad \text{and}\quad \nu^\prime(r)= \beta'(r)A'(r).\]
 An entropy flux triple $(\beta,\zeta,\nu)$ is called convex if $ \beta^{\prime\prime}(s) \ge 0$.  
\end{defi}
To define entropy solution, we first define associated Kirchoff's function of $A$, denoted by 
$G(x)$ as $G(x)= \int_0^x \sqrt{A^\prime(r)}\,dr$. With the help of a convex entropy flux triple $(\beta,\zeta,\nu)$, the notion of stochastic entropy solution is defined as follows:

 \begin{defi} [Stochastic entropy solution]
 \label{defi:stochentropsol}
A square integrable $ L^2(\R^d )$-valued $\{\mathcal{F}_t: t\geq 0 \}$-predictable stochastic process $u(t)= u(t,x)$ is called a stochastic entropy solution of \eqref{eq:stoc_con_brown} if
 \begin{itemize}
 \item[(i)] For each $ T>0$, 
 \begin{align*}
 G(u) \in L^2((0,T)\times \Omega;H^1(\R^d)), \,\, \text{and} \,\, 
 \underset{0\leq t\leq T}\sup  \E\big[||u(t,\cdot)||_{2}^{2}\big] <+ \infty. 
 \end{align*}
 \item[(ii)] Given a non-negative test function  $\psi\in C_{c}^{1,2}([0,\infty )\times\R^d) $ and a convex entropy flux triple $(\beta,\zeta,\nu)$, the following inequality holds:
 \begin{align}
 &  \int_{\Pi_T} \Big\{ \beta(u(t,x)) \partial_t\psi(t,x) +  \nu(u(t,x))\Delta \psi(t,x) -  \grad \psi(t,x)\cdot \zeta(u(t,x)) \Big\}dx\,dt \notag \\
 & + \int_{\Pi_T} \sigma(u(t,x))\beta^\prime (u(t,x))\psi(t,x)\,dW(t)\,dx
 + \frac{1}{2}\int_{\Pi_T}\sigma^2(u(t,x))\beta^{\prime\prime} (u(t,x))\psi(t,x)\,dx\,dt \notag \\
  &  + \int_{\Pi_T} \int_{|z|>0} \int_0^1 \eta(u(t,x);z)\beta^\prime \big(u(t,x) + \lambda\,\eta(u(t,x);z)\big)\psi(t,x)\,d\lambda\,\widetilde{N}(dz,dt)\,dx  \notag \\
 &\quad +\int_{\Pi_T} \int_{|z|>0}  \int_0^1  (1-\lambda)\eta^2(u(t,x);z)\beta^{\prime\prime} \big(u(t,x) + \lambda\,\eta(u(t,x);z)\big)
 \psi(t,x)\,d\lambda\,m(dz)\,dx\,dt \notag \\
 &  \quad \ge  \int_{\Pi_T} \beta^{\prime\prime}(u(t,x)) |\grad G(u(t,x))|^2\psi(t,x)\,dx\,dt - \int_{\R^d} \beta(u_0(x))\psi(0,x)\,dx, \quad \mathbb{P}-\text{a.s}.\label{inq:entropy-solun}
 \end{align}
 \end{itemize}
 \end{defi} 
 
Due to nonlocal nature of the  It\^{o}-L\'{e}vy  formula  and the noise-noise interaction, 
the Definition~\ref{defi:stochentropsol} alone does not give the $L^1$-contraction principle in the sense of average and hence the uniqueness.
In fact, classical ``doubling of variable'' technique in time variable does not work when one tries to compare directly two entropy 
solutions defined in the sense of Definition~\ref{defi:stochentropsol}. To overcome this problem, the authors 
in \cite{BaVaWit_2014, BisMajVal} used a more direct approach by comparing solutions of two regularized problems and subsequently 
sending the regularized parameter to zero, relying on ``weak compactness'' of the regularized approximations.

In order to successfully implement the direct approach, one needs to weaken the 
notion of stochastic entropy solution, and subsequently install the notion of so called generalized entropy solution.
\begin{defi} [Generalized Entropy Solution]
\label{defi: young_stochentropsol}
A square integrable $ L^2\big(\R \times (0,1)\big)$-valued $\{\mathcal{F}_t: t\geq 0 \}$-predictable stochastic
process $u(t)= u(t,x,\alpha)$ is called a generalized stochastic entropy solution of \eqref{eq:stoc_con_brown} if 
	\begin{itemize}
 \item[(i)] For each $ T>0$, 
 \begin{align*}
 G(u) \in L^2((0,T)\times \Omega \times(0,1);H^1(\R^d)), \,\, \text{and} \,\, 
 \underset{0\leq t\leq T}\sup  \E\big[||u(t,\cdot, \cdot)||_{2}^{2}\big] < \infty. 
 \end{align*}
 \item[(ii)] Given a non-negative test function  $\psi\in C_{c}^{1,2}([0,\infty )\times\R^d) $ and a convex entropy flux triple $(\beta,\zeta,\nu)$, the following inequality holds:
 \begin{align}
 &  \int_{\Pi_T} \int_0^1\Big\{ \beta(u(t,x, \alpha)) \partial_t\psi(t,x) +  \nu(u(t,x, \alpha))\Delta \psi(t,x) -  \grad \psi(t,x)\cdot \zeta(u(t,x, \alpha)) \Big\}\, d\alpha \,dx\,dt \notag \\
 & \quad + \int_{\Pi_T} \int_0^1 \sigma(u(t,x, \alpha))\beta^\prime (u(t,x, \alpha))\psi(t,x)\,d\alpha \,dW(t)\,dx \notag \\
 &\quad + \frac{1}{2}\int_{\Pi_T}\int_0^1\sigma^2(u(t,x, \alpha))\beta^{\prime\prime} (u(t,x, \alpha))\psi(t,x)\,d\alpha\,dx\,dt \notag \\
  &  \quad + \int_{\Pi_T}\int_0^1 \int_{|z|>0} \int_0^1 \eta(u(t,x, \alpha);z)\beta^\prime \big(u(t,x, \alpha) + \lambda\,\eta(u(t,x,\alpha);z)\big)\psi(t,x)\,d\alpha\,d\lambda\,\widetilde{N}(dz,dt)\,dx  \notag \\
 &\quad +\int_{\Pi_T}\int_0^1 \int_{|z|>0}  \int_0^1  (1-\lambda)\eta^2(u(t,x, \alpha);z)\beta^{\prime\prime} \big(u(t,x, \alpha) + \lambda\,\eta(u(t,x, \alpha);z)\big)
 \psi(t,x)\,d\alpha\,d\lambda\,m(dz)\,dx\,dt \notag \\
 &  \quad \ge  \int_{\Pi_T}\int_0^1 \beta^{\prime\prime}(u(t,x, \alpha)) |\grad G(u(t,x, \alpha))|^2\psi(t,x)\,d\alpha\,dx\,dt
 - \int_{\R^d} \beta(u_0(x))\psi(0,x)\,dx, \quad \mathbb{P}-\text{a.s}.\label{inq:entropy-solun_1}
 \end{align}
\end{itemize}
\end{defi}
As we mentioned earlier, in \cite {BaVaWit_2014,BisMajVal},
the authors have revisited \cite{boris_2010,carrillo_1999,chenkarlsen_2005} and established the well posedness of the entropy solution in the sense of Definition \ref{defi:stochentropsol} \textit{via} Young's measure theory.

\subsection{Scope and Outline of the Paper}
The above discussions clearly highlight the lack of stability estimates for the entropy solutions of degenerate 
parabolic-hyperbolic stochastic balance laws driven by L\'{e}vy noise. In this paper, drawing preliminary motivation from \cite{BisKoleyMaj,Chen-karlsen_2012,karl-resibro-2000},
we intend to develop a continuous dependence theory for stochastic entropy solution which in turn can be used to derive an error estimate for the vanishing viscosity method. However, it seems difficult to develop such a theory without securing a BV estimate for stochastic entropy solution. As a result, we first address the question of existence, uniqueness of stochastic BV entropy solution in $L^2(\R^d) \cap BV(\R^d)$ of the problem \eqref{eq:stoc_con_brown}. Making use of the crucial BV estimate, we provide a continuous dependence estimate and error estimate
for the vanishing viscosity method provided initial data lies in $u_0 \in L^2(\R^d) \cap BV(\R^d)$.
Finally, we turn our discussions to more general degenerate stochastic balance laws driven by L\'{e}vy processes,
namely when the functions $ \sigma, \eta$ appear in the L\'{e}vy noise has explicit dependency on the spatial position $x$ as well. In view of the discussions in \cite{BisKoleyMaj,Chen-karlsen_2012}, in this case we can't expect BV estimates, but instead a fractional BV estimate is expected. However, that does not prevent us to provide an existence proof for more general class of equations in $L^2(\R^d)$.
 
The rest of the paper is organized as follows:  we describe technical framework and state the main results in Section~\ref{sec:tech}. In Section~\ref{sec:apriori+existence}, we derive uniform spatial BV bound for viscous solutions. Using this bound, we establish well posedness of BV entropy solution of the Cauchy problem \eqref{eq:stoc_con_brown}. Section~\ref{sec:main-thm} is devoted on deriving the continuous dependence estimate on nonlinearities, while Section~ \ref{sec:cor} deals with the error estimates. Finally, in Section~\ref{sec:frac}, we establish a fractional BV estimate for a larger class of degenerate stochastic balance laws.
  
 
\section{Technical Framework and Statement of the Main Results}
\label{sec:tech}
Throughout this paper, we use the letter $C$ to denote various generic constants. There are situations where constant may change from line to line, but the notation is kept unchanged so long as it does not impact central idea. 
Moreover, for any separable Hilbert space $H$, we denote by $N_w^2(0,T,H)$, the Hilbert space of all the predictable $H$-valued processes $u$ such that $\E\Big[\int_0^T \|u\|^2_H\Big]<+\infty$.
Furthermore, we denote $BV(\R^d)$ as the set of integrable 
 functions with bounded variation on $\R^d$ endowed with the norm $|u|_{BV(\R^d)}= \|u\|_{L^1(\R^d)} + TV_{x}(u)$, where $TV_{x}$ is the total variation of $u$ defined on $\R^d$. 
The primary aim of this paper is to derive continuous dependence estimates for the entropy solutions of the Cauchy problem \eqref{eq:stoc_con_brown}, and we do so under the following assumptions:
 \begin{Assumptions}
 	\item \label{A1} The initial function $u_0$ is a $\mathcal{F}_0$ measurable random variable satisfying 
 	\begin{align*}
 	\E\Big[ \|u_0\|_2^2 + |u_0|_{BV(\R^d)}\Big] < + \infty.
 	\end{align*}
 	\item \label{A2}  $ A:\R\rightarrow \R$ is a non-decreasing  Lipschitz continuous function with $A(0)=0$. Furthermore, we assume that $A^{\prime\prime}$ is bounded.
 	\item \label{A3}  $ f=(f_1,f_2,\cdots, f_d):\R\rightarrow \R^d$ is a Lipschitz continuous function with $f_k(0)=0$, for 
 	all $1\le k\le d$.
 	\item \label{A4} We assume that $\sigma(0)=0$. Moreover, there exists positive constant $K > 0$  such that 
 	\begin{align*} \big| \sigma(u)-\sigma(v)\big|  \leq K |u-v|, ~\text{for all} \,\,u,v \in \R.
 	\end{align*}  
\item \label{A5} There exist positive constants $\lambda^* \in (0,1)$, and $C>0$ such that for all $u,v \in \R;~~z\in \R$
 \begin{align*}
  \big| \eta(u;z)-\eta(v;z)\big|  \leq \lambda^* |u-v|( |z|\wedge 1),\, \,\,\text{and}\,\,\, |\eta(u,z)|\le C\,|u|( |z|\wedge 1).
 \end{align*}
 Moreover, we assume that $\eta(0,z)=0$, for all $z\in \R$. 
 \item \label{A6} The L\'{e}vy measure $m(dz)$ is a Radon measure on $\R\backslash \{0\}$ with a possible singularity at $z=0$,
 which satisfies
\begin{align*}
    \int_{|z|>0}(1 \wedge |z|^2)\, m(dz) < +\infty. 
\end{align*}
 \end{Assumptions}

\begin{rem}
We remark that, one can accommodate polynomially growing flux function as a result of the requirement that the entropy solutions satisfy $L^p$ bounds for all $p\ge 2$. This in turn forces to choose initial data that are in $L^p$, for all $p$. However, we have chosen to work with the assumptions ~\ref{A1} and ~\ref{A3}. The assumption ~\ref{A5} is natural in the context of L\'{e}vy noise with the exception of $\lambda^*\in (0,1)$, which is necessary for the uniqueness.
Finally, the assumptions \ref{A1}-\ref{A6} collectively ensures existence and uniqueness of stochastic entropy solution,  and the continuous dependence estimate as well. 
\end{rem}

Like its deterministic counterpart, existence of entropy solution largely related to the study of associated viscous problem. To this end, for $\eps>0$, we consider viscous approximation of \eqref{eq:stoc_con_brown} as
 \begin{align}
 	 du_\eps(t,x) -\Delta A_\eps(u_\eps(t,x))\,dt - \mbox{div}_x f(u_\eps(t,x)) \,dt&= \sigma(u_\eps(t,x))\,dW(t) + \int_{|z|>0} \eta(u_\eps(t,x);z)\widetilde{N}(
 	 dz,dt),\label{eq:viscous-Brown} \\
 	 u_\eps(0,x)&=u_0(x), \notag
\end{align}
where $A_\eps(x)=A(x)+ \eps x$. One can follow the argument presented in \cite{BaVaWit_2014,BisMajVal,Vallet_2008} to ensure existence of weak solutions for the problem \eqref{eq:viscous-Brown}. More precisely, we have the following proposition from \cite{BaVaWit_2014,BisMajVal,Vallet_2008}.

\begin{prop}\label{prop:vanishing viscosity-solution}
Let the assumptions \ref{A1},\,\ref{A3},\,\ref{A4},\,\ref{A5} and \ref{A6} hold and $A:\R\mapsto \R$ is non-decreasing Lipschitz continuous function. Then, for any $\eps>0$, there exists a unique weak solution $u_\eps \in N_w^2(0,T,H^1(\R^d))$  with $\partial_t \big(u_\eps - \int_0^t \sigma(u_\eps(s,\cdot))\, dW(s) - \int_0^t \int_{|z|>0} \eta(u_\eps(s,\cdot);z)\widetilde{N}(dz,ds)\big)
\in L^2(\Omega\times(0,T),H^{-1}(\R^d))$,  to the problem \eqref{eq:viscous-Brown}. Moreover,
$u_\eps \in L^\infty(0,T;L^2(\Omega \times\R^d))$ and there exists a constant $C>0$, independent of $\eps$, such that
\begin{align}
	 \sup_{0\le t\le T} \E\Big[\big\|u_\eps(t)\big\|_2^2\Big]  + \eps \int_0^T \E\Big[\big\|\grad u_\eps(s)\big\|_2^2\Big]\,ds
	 + \int_0^T \E\Big[\big\|\grad G(u_\eps(s))\big\|_2^2\Big]\,ds \le C,\label{bounds:a-priori-viscous-solution}
\end{align} 
where $G$ is the associated Kirchoff's function of $A$. 
\end{prop}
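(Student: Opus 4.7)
The plan is to exploit the fact that, for fixed $\eps>0$, the diffusion $A_\eps$ is uniformly elliptic: $A_\eps'(r)=A'(r)+\eps\ge\eps$, so the problem is genuinely non-degenerate parabolic. This brings the equation into the well-established framework of monotone stochastic PDEs with Lipschitz noise, and existence and uniqueness can be obtained following \cite{BaVaWit_2014,BisMajVal,Vallet_2008} almost verbatim. The only ingredients to check are (i) the monotonicity of $u\mapsto -\Delta A_\eps(u)$ combined with the Lipschitz convective perturbation $-\mathrm{div}\,f(u)$ as operators $H^1(\R^d)\to H^{-1}(\R^d)$, and (ii) the Lipschitz/growth control on $\sigma$ and $\eta$, supplied by \ref{A4}--\ref{A6}.

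First I would build a Galerkin approximation $u_\eps^n$ on an increasing sequence of finite-dimensional subspaces of $H^1(\R^d)$ generated by compactly supported smooth basis functions. Standard finite-dimensional SDE theory for equations driven by Brownian motion and a compensated Poisson random measure gives existence on $[0,T]$ for each $n$. Applying It\^o's formula to $\|u_\eps^n(t)\|_2^2$, the convective contribution vanishes after integration by parts (since $f(0)=0$ and $u_\eps^n$ decays at infinity), while the parabolic part gives
\begin{align*}
\int_{\R^d}\grad A_\eps(u_\eps^n)\cdot\grad u_\eps^n\,dx=\eps\,\|\grad u_\eps^n\|_2^2+\|\grad G(u_\eps^n)\|_2^2.
\end{align*}
The Brownian It\^o correction is controlled by $K^2\,\E\|u_\eps^n\|_2^2$ via \ref{A4}, and the compensated Poisson contribution, using the mean-value form of the It\^o--L\'evy expansion together with \ref{A5} and \ref{A6}, by $C\int_{|z|>0}(1\wedge|z|^2)\,m(dz)\cdot\E\|u_\eps^n\|_2^2$. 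Gronwall's inequality then yields \eqref{bounds:a-priori-viscous-solution} uniformly in $n$.

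Next I would pass to the limit $n\to\infty$. The uniform bound gives weak-$*$ convergence in $L^\infty(0,T;L^2(\Omega\times\R^d))$ and weak convergence of $\grad A_\eps(u_\eps^n)$ in $L^2(\Omega\times\Pi_T)$. Strong local convergence of $u_\eps^n$ in space and time is obtained via an Aldous-type tightness argument in the Skorokhod $J_1$ topology, adapted to c\`adl\`ag processes as in \cite{BisMajVal}, together with a spatial cutoff exploiting the $H^1$ bound. Lipschitzness then propagates this convergence to $f(u_\eps^n)$, $\sigma(u_\eps^n)$ and $\eta(u_\eps^n;\cdot)$, and Minty's monotonicity trick identifies the weak limit of $\grad A_\eps(u_\eps^n)$ with $\grad A_\eps(u_\eps)$. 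The limit $u_\eps$ is thus a weak solution in the sense of Definition~\ref{defi:weak-solution}, and passing to the limit in the energy identity preserves \eqref{bounds:a-priori-viscous-solution} by lower-semicontinuity. Uniqueness follows by applying It\^o's formula to $\|u_\eps-\tilde u_\eps\|_2^2$ for two solutions with the same datum, using monotonicity of $A_\eps$, Lipschitz continuity of $f$, the bounds \ref{A4}--\ref{A5}, and Gronwall.

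The main obstacle is the limit passage in the non-monotone convective term on the unbounded spatial domain in the presence of jump noise. The $H^1$ bound supplies spatial compactness, but temporal compactness is delicate because of the c\`adl\`ag nature of the Poisson integral: one must either invoke a Skorokhod-type representation on a new probability space, or prove tightness of the laws directly via an Aldous-type criterion. Once this compactness is secured, the passage to the limit in every remaining term is routine and all bounds in the proposition are inherited from the Galerkin estimates.
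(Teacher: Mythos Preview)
The paper does not supply its own proof of this proposition; it is stated as a direct consequence of \cite{BaVaWit_2014,BisMajVal,Vallet_2008}, and the text merely says ``One can follow the argument presented in \cite{BaVaWit_2014,BisMajVal,Vallet_2008} to ensure existence of weak solutions''. So there is no in-paper argument to compare against.

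Your sketch is consistent with what those references do and is essentially correct: Galerkin truncation, the energy identity via It\^o--L\'evy (where the convective term integrates to zero and the elliptic term produces exactly $\eps\|\nabla u_\eps^n\|_2^2+\|\nabla G(u_\eps^n)\|_2^2$), Gronwall for the uniform bound, and then passage to the limit with Minty's trick for the monotone part. One remark: for this \emph{uniformly elliptic} viscous problem the Skorokhod/Aldous machinery you invoke is heavier than necessary. The cited works handle the non-monotone Lipschitz convection $-\mathrm{div}\,f(u)$ more directly, either by a Banach fixed-point argument (freeze $u$ in $f$, solve the monotone problem, iterate using the Lipschitz bound on $f$ and the contraction in $L^2(\Omega\times\Pi_T)$) or by exploiting the $H^1$-bound and weak lower-semicontinuity without changing probability space. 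Either route avoids the delicate c\`adl\`ag tightness step you flag as the main obstacle. Your argument would still go through, but it is worth knowing that the obstacle you identify is not actually present in the references the paper relies on.
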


We are now in a position to state the main results of this paper. 
\begin{maintheorem*} [Continuous dependence estimate]
 Let the assumptions \ref{A1}-\ref{A6} hold for two sets of given data $(u_0, f, A,\sigma, \eta)$ and $(v_0,g,B,\widetilde{\sigma}, \widetilde{\eta})$. 
 Let $u(t,x)$ be any BV entropy solution of \eqref{eq:stoc_con_brown} with initial data $u_0(x)$ and $v(s,y)$ be another BV entropy solution with initial
 data $v_0(x)$ and satisfies 
\begin{align}
dv(s,y) -\Delta  B(v(s,y))\,ds - \mbox{div} g(v(s,y)) \,ds =\widetilde{\sigma}(v(s,y))\,dW(s) + \int_{|z|>0} \widetilde{\eta}(v(s,y);z)\,\widetilde{N}(dz,ds).\label{eq:stoc_con_brown-1}
\end{align}
Moreover, define
\begin{align*}
 \mathcal{E}(\sigma, \widetilde{\sigma}):=& \sup_{\xi \neq0} \frac{|\sigma(\xi)-\widetilde{\sigma}(\xi)|}{|\xi|}, \\
 \mathcal{D}(\eta,\widetilde{\eta}):=& \displaystyle{ \sup_{u \neq0} \int_{|z|>0} \frac{\big| \eta(u;z)-\widetilde{\eta}(u;z)\big|^2
}{|u|^2}\, m(dz)},
\end{align*}
and, in addition, assume that $f^{\prime\prime}\in L^\infty$.
Then, there exists a constant $C_T$, depending on  $T$, $ |u_0|_{BV(\R^d)}$, $|v_0|_{BV(\R^d)}$, $\|f^{\prime\prime}\|_{\infty}$, 
$\|f^\prime\|_{\infty}$, $\|\Phi\|_{1}$ and $\|B^\prime\|_{\infty}$ such that for a.e. $0<t<T<+\infty$, 
  \begin{align*}
     & \E \Big[\int_{\R^d}\big| u(t,x)-v(t,x)\big|\Phi(x)\,dx \Big] \notag \\
     & \hspace{1cm} \le C_Te^{Ct} \Bigg\{ \E\Big[\int_{\R^d}\big| u_0(x) -v_0(x)\big| \,dx\Big] 
     + \max \bigg\{ \mathcal{E}(\sigma, \widetilde{\sigma}),\sqrt{\mathcal{D}(\eta,\widetilde{\eta})}\bigg\} \sqrt{t} + ||f^\prime-g^\prime||_{\infty} t  \\
      & \hspace{5cm}+ \max\bigg\{ \sqrt{\|A'-B'\|_\infty}, \, \sqrt{\mathcal{E}(\sigma, \widetilde{\sigma})}, \sqrt[4]{\mathcal{D}(\eta,\widetilde{\eta})}  \bigg\}\sqrt{t} \Bigg\},
\end{align*}
   where $\Phi \in L^1(\R^d)$ such that $ 0\le \Phi(x)\le 1$, for all $x\in \R^d$. 
\end{maintheorem*}

As a by product of the above theorem, we have the following corollary:
\begin{maincorollary*}[Rate of convergence]
Let the assumptions \ref{A1}-\ref{A6} hold and $f^{\prime\prime}\in L^\infty$. Let $u(t,x)$ be any BV entropy solution of \eqref{eq:stoc_con_brown}
with $\E\Big[|u(t,\cdot)|_{BV(\R^d)}\Big] \le \E\Big[|u_0(\cdot)|_{BV(\R^d)}\Big]$ and 
  $u_\eps(s,y)$ be a weak solution to the problem \eqref{eq:viscous-Brown}. 
  Then there exists a constant $C$ depending only on 
  $|u_0|_{BV(\R^d)}, \|f^{\prime\prime}\|_{\infty}, \|f^\prime\|_{\infty}$, and $\|A^\prime\|_{\infty}$ such that for a.e. $t>0$, 
\begin{align*}
 \E\Big[\|u_\eps(t,\cdot)-u(t,\cdot)\|_{L^1(\R^d)}\Big] \le Ce^{Ct} \,\eps^\frac{1}{2}.
\end{align*}	
\end{maincorollary*}

Before concluding this section, we introduce a  special class of entropy functions, 
called convex approximation of absolute value function. To do so,  let $\beta:\R \rightarrow \R$ be a $C^\infty$ function satisfying 
\begin{align*}
\beta(0) = 0,\quad \beta(-r)= \beta(r),\quad 
\beta^\prime(-r) = -\beta^\prime(r),\quad \beta^{\prime\prime} \ge 0,
\end{align*} 
and 
\begin{align*}
\beta^\prime(r)=\begin{cases} -1\quad \text{when} ~ r\le -1,\\
\in [-1,1] \quad\text{when}~ |r|<1,\\
+1 \quad \text{when} ~ r\ge 1.
\end{cases}
\end{align*} 
For any $\xi> 0$, define $\beta_\xi:\R \rightarrow \R$ by 
$\beta_\xi(r) = \xi \beta(\frac{r}{\xi})$. 
Then
\begin{align}\label{eq:approx to abosx}
|r|-M_1\xi \le \beta_\xi(r) \le |r|\quad 
\text{and} \quad |\beta_\xi^{\prime\prime}(r)| 
\le \frac{M_2}{\xi} {\bf 1}_{|r|\le \xi},
\end{align} 
where $M_1 := \sup_{|r|\le 1}\big | |r|-\beta(r)\big |$ and 
$M_2 := \sup_{|r|\le 1}|\beta^{\prime\prime} (r)|$.

\begin{rem}\label{beta}
Note that if $\beta_\xi$ is an even, non-negative, convex function and if  $\beta^{\prime\prime}_\xi$ is non-increasing on the positive reals, then, for any positive $r$,
\begin{align}
&C_\xi r^2 \geq 2\beta_\xi(r) = 2\int_0^r\int_0^s \beta^{\prime\prime}_\xi(\sigma)d\sigma ds \geq r^2 \beta^{\prime\prime}_\xi(r) \label{beta_1}
\\ \text{ and }&\ \forall \alpha \geq 1,  \nonumber
\\&
\beta_\xi(\alpha r) =\alpha^2 \int_0^r\int_0^s \beta^{\prime\prime}_\xi(\alpha\sigma)d\sigma ds \leq \alpha^2 \beta_\xi(r).  \label{beta_2}
\end{align}
\end{rem}

Moreover, for $\beta=\beta_\xi$, we define 
\begin{align*}
\begin{cases}
f_k^\beta(a,b)= \int_b^a \beta_\xi ^\prime(r-b) f_k^\prime(r)\,dr, \\
f^\beta(a,b)=\big(f_1^\beta(a,b),f_2^\beta(a,b),\cdots,f_d^\beta(a,b)\big), \\
A^\beta(a,b)= \int_b^a \beta_\xi ^\prime(r-b) A^\prime(r)\,dr.
\end{cases}
\end{align*}

\section{A Priori Estimates}
\label{sec:apriori+existence}
In this section, we derive uniform spatial BV bound for the solutions of degenerate parabolic-hyperbolic stochastic balance laws driven by L\'{e}vy noise \eqref{eq:stoc_con_brown} under the assumptions $\ref{A1}$-$\ref{A6}$. Like its deterministic counter part, we first secure uniform spatial BV bound for the viscous solutions, i.e., solutions of \eqref{eq:viscous-Brown}. Regarding this, we have the following theorem. 
\begin{thm}
\label{thm:bv-viscous}
Let the assumptions $\ref{A1}$-$\ref{A6}$ hold. For $\eps>0$, let $u_\eps(t,x)$  be a solution to the Cauchy problem \eqref{eq:viscous-Brown}. Then there exists a constant $C>0$, independent of $\eps$, such that for any time $t>0$, 
\begin{align*}
\sup_{\eps>0} \E\Big[ \|u_\eps(t)\|_{L^1(\R^d)}\Big] \le  C\E\Big[\|u_0\|_{L^1(\R^d)} \Big], \quad
&\sup_{\eps>0} \E\Big[ TV_x(u_\eps(t))\Big] \le  \E\Big[TV_x(u_0) \Big].
 \end{align*}
\end{thm}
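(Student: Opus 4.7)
The plan is to establish an $L^1$-contraction principle for two viscous solutions driven by the \emph{same} realization of $W$ and $\widetilde{N}$, and then derive both stated bounds as immediate consequences. Specifically, if $u_\eps$ and $v_\eps$ solve \eqref{eq:viscous-Brown} with initial data $u_0$ and $v_0$ respectively, I would prove
\begin{equation*}
\E\big[\|u_\eps(t,\cdot) - v_\eps(t,\cdot)\|_{L^1(\R^d)}\big] \le \E\big[\|u_0 - v_0\|_{L^1(\R^d)}\big].
\end{equation*}
The $L^1$ bound then follows by taking $v_\eps \equiv 0$, which is a legitimate solution thanks to $A(0)=f(0)=\sigma(0)=\eta(0;z)=0$. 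Since the SPDE is translation-invariant in $x$, the shifted process $u_\eps(t,\cdot+h)$ is also a solution (with initial datum $u_0(\cdot+h)$ and the same noise), and the $BV$ estimate follows by applying the contraction with $v_\eps(t,x) = u_\eps(t,x+h)$, dividing by $|h|$, and taking the supremum over nonzero $h$.

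To prove the contraction, I would apply It\^o's formula to $\int_{\R^d}\beta_\xi(u_\eps - v_\eps)\,\psi_R(x)\,dx$ with a smooth compactly supported cutoff $\psi_R$, integrate in $x$, take expectation, and then send $\xi\to 0$ followed by $\psi_R \uparrow 1$. The convection term is treated via the structural identity associated with the entropy flux $f^{\beta_\xi}$ defined in the excerpt: a direct computation shows that $\int \beta_\xi'(u_\eps - v_\eps)\,\Div[f(u_\eps)-f(v_\eps)]\,dx - \int \Div f^{\beta_\xi}(u_\eps, v_\eps)\,dx$ is bounded by $M_2\|f''\|_\infty \xi\,TV(v_\eps)$ and vanishes as $\xi\to 0$. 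For the degenerate diffusion term, integration by parts produces a manifestly non-positive dissipative contribution (which we discard) together with the cross term
\begin{equation*}
-\int \beta_\xi''(u_\eps - v_\eps)\big[A_\eps'(u_\eps)-A_\eps'(v_\eps)\big]\,\grad(u_\eps - v_\eps)\cdot\grad v_\eps\,dx,
\end{equation*}
whose modulus is controlled by $M_2\|A''\|_\infty \int |\grad(u_\eps - v_\eps)||\grad v_\eps|\mathbf{1}_{|u_\eps - v_\eps|\le \xi}\,dx$ and tends to zero as $\xi\to 0$ by dominated convergence, using the $L^2(\Omega\times(0,T); H^1(\R^d))$ regularity granted by Proposition~\ref{prop:vanishing viscosity-solution}. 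The Brownian It\^o correction is bounded using $|\sigma(u_\eps)-\sigma(v_\eps)|\le K|u_\eps - v_\eps|$ together with $\beta_\xi''(r)\,r^2 \le M_2\xi|r|$, so that the integrand is dominated by $\tfrac{1}{2}M_2K^2|u_\eps - v_\eps|\mathbf{1}_{|u_\eps - v_\eps|\le\xi}$ and vanishes in the limit. The jump compensator is treated analogously, with the hypothesis $\lambda^* < 1$ in \ref{A5} ensuring that the relevant indicator set $\{|u_\eps - v_\eps + \lambda(\eta(u_\eps;z) - \eta(v_\eps;z))|\le \xi\}$ is contained in $\{|u_\eps - v_\eps|\le\xi/(1-\lambda^*)\}$. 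All purely martingale contributions disappear upon taking expectation.

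The main obstacle I anticipate is the diffusion cross term above: its sign is indeterminate, and its control rests critically on the assumption that $A''$ is bounded (\ref{A2}) together with the fact that, for fixed $\eps>0$, the viscous approximations belong to $L^2(\Omega\times(0,T); H^1(\R^d))$. A secondary technical point is the passage to global $\R^d$ integrability: since we do not know a priori that $u_\eps - v_\eps\in L^1$, we must carry the spatial cutoff $\psi_R$ through the passage $\xi\to 0$ and only afterwards send $R\to\infty$, controlling the boundary contributions coming from $\|\grad\psi_R\|_\infty$ and $\|\Delta\psi_R\|_\infty$ by exploiting the Lipschitz continuity of $f$, $A$, $\sigma$ and $\eta$ together with their vanishing at zero. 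Once the contraction is secured, both statements of the theorem follow at once; the $BV$ bound is in fact sharp (constant $1$) because the underlying contraction is non-expansive in $L^1$.
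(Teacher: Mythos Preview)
Your overall architecture is exactly the paper's: prove an $L^1$-contraction between two viscous solutions driven by the same noise, then specialize to $v_\eps\equiv 0$ for the $L^1$ bound and to $v_\eps=u_\eps(\cdot+h)$ for the $BV$ bound. The treatment of the diffusion cross term, the Brownian correction and the jump compensator is also essentially what the paper does.

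There is, however, a genuine gap in your handling of the convection term. You propose to control
\[
\int \beta_\xi'(u_\eps-v_\eps)\,\Div\!\big[f(u_\eps)-f(v_\eps)\big]\,dx
-\int \Div f^{\beta_\xi}(u_\eps,v_\eps)\,dx
\]
by $M_2\|f''\|_\infty\,\xi\,TV_x(v_\eps)$. This fails under the standing hypotheses on two counts. First, assumption~\ref{A3} only gives $f$ Lipschitz; $f''\in L^\infty$ is \emph{not} part of \ref{A1}--\ref{A6} (it is added only later for the Main Theorem). Second, the factor $TV_x(v_\eps)$ is precisely the quantity you are trying to bound uniformly in $\eps$, so invoking it here is circular for the $BV$ part.

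The paper sidesteps both issues by exploiting the artificial viscosity. Writing the convection contribution after It\^o's formula as
\[
\mathcal{C}=-\E\!\int_0^t\!\!\int_{\R^d}\beta_\xi''(u_\eps-v_\eps)\,
\big(f(u_\eps)-f(v_\eps)\big)\cdot\grad(u_\eps-v_\eps)\,dx\,ds,
\]
one applies Young's inequality to obtain
\[
\mathcal{C}\le \frac{\eps}{4}\,\E\!\int\beta_\xi''(u_\eps-v_\eps)\,
|\grad(u_\eps-v_\eps)|^2
+C(\eps)\,\E\!\int\beta_\xi''(u_\eps-v_\eps)\,|f(u_\eps)-f(v_\eps)|^2.
\]
The first piece is absorbed into the good term $-\eps\,\E\int\beta_\xi''(u_\eps-v_\eps)|\grad(u_\eps-v_\eps)|^2$ coming from $A_\eps$, while the second is dominated by $C(\eps)\,\E\int|u_\eps-v_\eps|\mathbf{1}_{\{|u_\eps-v_\eps|<\xi\}}\to 0$ as $\xi\to 0$ by dominated convergence. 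This uses only the Lipschitz continuity of $f$ and the $H^1$ regularity of $u_\eps,v_\eps$ for fixed $\eps$, with no appeal to $f''$ or to any a~priori $BV$ bound. If you replace your entropy-flux manipulation by this absorption trick, the rest of your argument goes through.
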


\begin{rem}
In view of the lower semi-continuity property and the positivity of the total variation $TV_x$, we point out that $u \mapsto \E[TV_x(u)]$ makes sense for 
 any $u\in L^1(\Omega \times \R^d)$ as a real-extended lsc convex function.
\end{rem}

\begin{proof}
For a proof of the first part of the above theorem, consult Appendix~\ref{appendix}. For the second part, we proceed as follows: 
Set $\eps>0$ and let $u_\eps$ be the weak solution to the problem  \eqref{eq:viscous-Brown} and $v_\eps$ be a weak solution to the stochastic equation 
\begin{align}
dv_\eps(t,x) -\Delta A_\eps(v_\eps(t,x))\,dt - \mbox{div}_x f(v_\eps(t,x)) \,dt&=\sigma(v_\eps(t,x))\,dW(t) +  \int_{|z|>0} \eta(v_\eps(t,x);z)\widetilde{N}(dz,dt),\notag \\
  v_\eps(0,x)&=v_0(x),
\end{align}
Then, it is evident that $u_\eps -v_\eps$ is a stochastic weak solution to the problem 
\begin{align*}
 \begin{cases}
  d(u_\eps(t,x)-v_{\eps}(t,x))- \Delta \big( A_\eps(u_\eps(t,x))-A_\eps(v_\eps(t,x))\,dt - \mbox{div}_x\big( f(u_\eps(t,x))- f(v_\eps(t,x))\big)\,dt\\
  \hspace{1.5cm} = \big(\sigma(u_\eps(t,x))-\sigma(v_\eps(t,x))\Big)\,dW(t) + \int_{|z|>0}\big(\eta(u_\eps(t,x);z)-\eta(v_\eps(t,x);z)\big)\,\widetilde{N}(dz,dt),\\
  u_\eps -v_\eps\big|_{(t=0,x)} = u_0(x)-v_0(x).
 \end{cases}
 \end{align*}
Note that $u_\epsilon-v_\epsilon$ is a weak solution and not a strong one. Thus, we apply a slight modification 
of It\^{o}-L\'{e}vy formula (as proposed in Fellah \cite{fellah} and Biswas et al. \cite{BisMajVal}) to $\int_{\R^d}\beta_\xi(u_\eps-v_\eps)dx$, 
where $\beta_\xi$ is defined in Section \ref{sec:tech} and then take expectation. The result is
\begin{align}
& \E\Big[ \int_{\R^d} \beta_\xi \big(u_\eps(t,x)-v_\eps(t,x)\big)\,dx\Big] \notag \\
&= \E\Big[ \int_{\R^d} \beta_\xi \big(u_0(x)-v_0(x)\big)\,dx\Big] \notag \\
& - \E\Big[ \int_{\R^d}
\int_0^t \beta_\xi^{\prime\prime} \big( u_\eps(s,x)-v_\eps(s,x)\big) \grad \big(A_{\eps}(u_\eps(s,x))-A_\eps(v_\eps(s,x))\big)\cdot \grad \big(u_\eps(s,x)-v_\eps(s,x)\big)  \,ds\,dx\Big] \notag \\
&\quad - \E\Big[ \int_{\R^d}
\int_0^t \beta_\xi^{\prime\prime} \big( u_\eps(s,x)-v_\eps(s,x)\big) \big(f(u_\eps(s,x))-f(v_\eps(s,x))\big)\cdot \grad \big(u_\eps(s,x)-v_\eps(s,x)\big)  \,ds\,dx\Big] \notag \\
& \qquad + \frac{1}{2}  \E\Big[ \int_{\R^d}
\int_0^t \beta_\xi^{\prime\prime} \big( u_\eps(s,x)-v_\eps(s,x)\big) \big(\sigma(u_\eps(s,x))-\sigma(v_\eps(s,x))\big)^2 \,ds\,dx\Big] \notag \\
&\quad \quad  + \E\Big[ \int_{\R^d}
\int_0^t \int_{|z|>0} \int_0^1 (1-\lambda) \beta_\xi^{\prime\prime} \Big( u_\eps(s,x)-v_\eps(s,x) + \lambda \big(\eta(u_\eps(s,x);z)-\eta(v_\eps(s,x);z)\big)\Big)\notag \\
 & \hspace{6cm} \times \big(\eta(u_\eps(s,x);z)-\eta(v_\eps(s,x);z)\big)^2 \,d\lambda\,m(dz)\,ds\,dx\Big] \notag \\
& := \mathcal{A} + \mathcal{B}+ \mathcal{C} + \mathcal{D} +\mathcal{G}.
\end{align}
Our aim is to estimate each of the above terms separately. Let us first consider the term $\mathcal{B}$. Note that, since $-A_\eps^\prime(x)\le -\eps$, we have 
\begin{align}
\mathcal{B}&= - \E\Big[ \int_{\R^d}
\int_0^t \beta_\xi^{\prime\prime} \big( u_\eps(s,x)-v_\eps(s,x)\big) A_\eps^\prime(u_\eps(s,x)) \Big|\grad \big(u_\eps(s,x)-v_\eps(s,x)\big)\Big|^2  \,ds\,dx\Big] \notag \\
& - \E\Big[ \int_{\R^d}
 \int_0^t \beta_\xi^{\prime\prime} \big( u_\eps(s,x)-v_\eps(s,x)\big)  \big(A_{\eps}^\prime(u_\eps(s,x))-A_\eps^\prime(v_\eps(s,x))\big) \grad v_\eps(s,y)\cdot \grad \big(u_\eps(s,x)-v_\eps(s,x)\big)  \,ds\,dx\Big] \notag \\
&\le -  \eps \, \E\Big[ \int_{\R^d}
\int_0^t \beta_\xi^{\prime\prime} \big( u_\eps(s,x)-v_\eps(s,x)\big) \Big|\grad \big(u_\eps(s,x)-v_\eps(s,x)\big)\Big|^2  \,ds\,dx\Big] \notag \\
& - \E\Big[ \int_{\R^d}
\int_0^t \beta_\xi^{\prime\prime} \big( u_\eps(s,x)-v_\eps(s,x)\big)  \big(A_{\eps}^\prime(u_\eps(s,x))-A_\eps^\prime(v_\eps(s,x))\big) \grad v_\eps(s,y)\cdot \grad \big(u_\eps(s,x)-v_\eps(s,x)\big)  \,ds\,dx\Big] \notag \\
& := \mathcal{B}_1 + \mathcal{B}_2.\label{eq:b}
\end{align}
Now consider the term $\mathcal{B}_2$. Thanks to the Young's inequality, we obtain 
\begin{align}
\big|\mathcal{B}_2\big| \le & \frac{\eps}{4} \, \E\Big[ \int_{\R^d}
\int_0^t \beta_\xi^{\prime\prime} \big( u_\eps(s,x)-v_\eps(s,x)\big) \Big|\grad \big(u_\eps(s,x)-v_\eps(s,x)\big)\Big|^2  \,ds\,dx\Big] \notag \\ 
& + C(\eps) \, \E\Big[ \int_{\R^d}
\int_0^t \beta_\xi^{\prime\prime} \big( u_\eps(s,x)-v_\eps(s,x)\big)  \big(A_{\eps}^\prime(u_\eps(s,x))-A_\eps^\prime(v_\eps(s,x))\big)^2 \big|\grad v_\eps(s,x)\big|^2 \,ds\,dx\Big].\label{esti:b2}
\end{align}
Thus, combining \eqref{esti:b2} and \eqref{eq:b}, we get 
\begin{align}
\mathcal{B} \le & - \frac{3 \eps}{4} \E\Big[ \int_{\R^d}
\int_0^t \beta_\xi^{\prime\prime} \big( u_\eps(s,x)-v_\eps(s,x)\big) \Big|\grad \big(u_\eps(s,x)-v_\eps(s,x)\big)\Big|^2  \,ds\,dx\Big] \notag \\ 
& + C(\eps) \, \E\Big[ \int_{\R^d}
\int_0^t \beta_\xi^{\prime\prime} \big( u_\eps(s,x)-v_\eps(s,x)\big)
\big(A^\prime(u_\eps(s,x))-A^\prime(v_\eps(s,x))\big)^2 \big|\grad v_\eps(s,x)\big|^2 \,ds\,dx\Big] \notag \\
&:= \mathcal{B}_3 + \mathcal{B}_4.\label{esti:b}
\end{align}
 Let us focus on the term $\mathcal{B}_4$. Note that, in view of last part of the assumption \ref{A2}, $A^{\prime\prime}$ is bounded. 
Using that along with the estimate \eqref{bounds:a-priori-viscous-solution} and the fact that 
 $r^2 \beta_{\xi}^{\prime\prime}(r) \le C \xi$ for any $r\in \R$,  we estimate $\mathcal{B}_4$ as 
\begin{align}
 \mathcal{B}_4& \le C(\eps) \xi \E\Big[\int_0^t \int_{\R^d} \big|\grad v_\eps(s,x)\big|^2\,dx\,ds \Big] \notag \\
 & \le \xi C(\eps) \int_0^T \E\Big[ \|\grad v_\eps(s)\|_2^2 \Big] ds
\le C(\eps) \,\xi \mapsto 0, \,\, \text{as}\,\, \xi \mapsto 0 \,(\text{keeping}\, \eps>0\,\,\text{fixed} ).\notag
\end{align}
Next we move on to estimate the flux term $\mathcal{C}$. In view of the Young's inequality, one has
\begin{align}
\mathcal{C} \le& \frac{\eps}{4}  \E\Big[ \int_{\R^d}
\int_0^t \beta_\xi^{\prime\prime} \big( u_\eps(s,x)-v_\eps(s,x)\big) \Big|\grad \big(u_\eps(s,x)-v_\eps(s,x)\big)\Big|^2  \,ds\,dx\Big] \notag \\ 
& \qquad \qquad + C(\eps) \, \E\Big[ \int_{\R^d}
\int_0^t \beta_\xi^{\prime\prime} \big( u_\eps(s,x)-v_\eps(s,x)\big) \big| f(u_\eps(s,x))-f(v_\eps(s,x))\big|^2  \,ds\,dx\Big] \notag \\ 
&:= \mathcal{C}_1 + \mathcal{C}_2. \notag
\end{align}
In view of the Lipschitz continuity of $f$ and \eqref{eq:approx to abosx}, we see that 
\begin{align*}
 \beta_\xi^{\prime\prime} \big( u_\eps(s,x)-v_\eps(s,x)\big) \big| f(u_\eps(s,x))-f(v_\eps(s,x))\big|^2 &  \le C \big|u_\eps(s,x)-v_\eps(s,x)\big|
 \textbf{1}_{\big\{ 0<\big| u_\eps(s,x)-v_\eps(s,x)\big| <\xi \big\}} \notag \\
 & \le C \big| u_\eps(s,x)-v_\eps(s,x)\big| \in L^1(\Omega \times (0,T)\times \R^d).
\end{align*}
On the other hand, $\big|u_\eps(s,x)-v_\eps(s,x)\big|
 \textbf{1}_{\big\{ 0<\big| u_\eps(s,x)-v_\eps(s,x)\big| <\xi \big\}} \longrightarrow 0$ as $\xi \goto 0$ for almost every $(t,x)$ and almost surely. Thus by dominated convergence theorem, we conclude that 
$\mathcal{C}_2 \le \eps(\xi)$ where $\eps(\xi) \goto 0$ as $\xi \goto 0$. A similar calculation reveals that
 $\mathcal{D}\le \eps(\xi)$  where $\eps(\xi) \goto 0$ as $\xi \goto 0$.
 \vspace{.2cm}
 
Now we move on to estimate $\mathcal{G}$. Let $a= u_\eps(s,x)-v_\eps(s,x)$ and $b=\eta(u_\eps(s,x);z)-\eta(v_\eps(s,x);z)$. Then we have, in view of assumption 
\ref{A5}, 
\begin{align}
 \mathcal{G}= &\E\Big[ \int_{\R^d}\int_0^t \int_{|z|>0} \int_0^1 (1-\lambda) b^2 \beta_\xi^{\prime\prime} \big(a + \lambda\,b\big) \,d\lambda\,m(dz)\,ds\,dx\Big] \notag \\
 & \le  \E\Big[ \int_{\R^d}\int_0^t \int_{|z|>0} \int_0^1 (1-\lambda) a^2 \beta_\xi^{\prime\prime} \big(a + \lambda\,b\big)
 (1\wedge |z|^2) \,d\lambda\,m(dz)\,ds\,dx\Big]. \label{eq:nonlocal-estim-0}
\end{align}
Note that $\beta^{\prime\prime}$ is nonnegative and symmetric around zero. 
Thus, we can assume without loss of generality that $a \ge 0$. 
Then, by assumption \ref{A5},
\begin{align*}
u_\eps(s,x)-v_\eps(s,x) +\lambda \,b \ge (1- \lambda^*) \big( u_\eps(s,x)-v_\eps(s,x)\big)
\end{align*}
for $\lambda \in [0,1]$. 
In other words    
\begin{align}
0 \le a \le (1-\lambda^*)^{-1}(a+ \lambda b). 
\label{eq:nonlocal-estim-1}
\end{align}
We combine \eqref{eq:nonlocal-estim-0} and \eqref{eq:nonlocal-estim-1} to obtain 
\begin{align*}
 \mathcal{G} \le C(\lambda^*)  \E\Big[ \int_{\R^d}\int_0^t \int_{|z|>0} \int_0^1 (1-\lambda) (a+ \lambda b)^2 
 \beta_\xi^{\prime\prime} \big(a + \lambda\,b\big) (1\wedge |z|^2) \,d\lambda\,m(dz)\,ds\,dx\Big].
\end{align*}
In view of \eqref{eq:approx to abosx}, and the assumption on $\eta$ that $\eta(0,z)=0$ for all $z\in \R$,  we see that, for each $\lambda \in [0,1]$
\begin{align*}
 (a+ \lambda b)^2 \beta_\xi^{\prime\prime} \big(a + \lambda\,b\big)& \le |a +\lambda b | \textbf{1}_{\{ 0< |a + \lambda b | <\xi \}}  \notag \\
 & \le |a +\lambda b | \in L^1(\Omega \times(0,T)\times \R^d),
\end{align*}
for $m(dz)$-almost every $z\in \R$. Again, $|a +\lambda b | \bf{1}_{\{ 0< |a + \lambda b | <\xi \}}\longrightarrow 0 $ as $\xi \goto 0$
for almost every $(t,x)$ and almost surely. Since $\int_{|z|>0} (1\wedge |z|^2) m(dz) < \infty $, by dominated convergence theorem, we conclude that 
$\mathcal{G} \goto 0$ as $\xi \goto 0$.

Combining all the above estimates, we arrive at 
\begin{align}
 \E\Big[ \int_{\R^d} \beta_\xi \big(u_\eps(t,x)-v_\eps(t,x)\big)\,dx\Big]& \le
 - \frac{ \eps}{2} \E\Big[ \int_{\R^d}
 \int_0^t \beta_\xi^{\prime\prime} \big( u_\eps(s,x)-v_\eps(s,x)\big) \Big|\grad \big(u_\eps(s,x)-v_\eps(s,x)\big)\Big|^2  \,ds\,dx\Big] \notag \\
  & \qquad + \eps(\xi) + \E\Big[ \int_{\R^d} \beta_\xi \big(u_0(x)-v_0(x)\big)\,dx\Big]\notag \\
  & \le  \eps(\xi) + \E\Big[ \int_{\R^d} \beta_\xi \big(u_0(x)-v_0(x)\big)\,dx\Big].
  \label{esti:final-bv}
\end{align}
 Keeping $\eps >0$ fixed, we pass to the limit $\xi\goto 0$ in \eqref{esti:final-bv} and the resulting expressions reads as
\begin{align*}
 \E\Big[ \int_{\R^d} \big|u_\eps(t,x)-v_\eps(t,x)\big|\,dx\Big] \le \E\Big[ \int_{\R^d} \big|u_0(x)-v_0(x)\big|\,dx\Big].
\end{align*} 
Assume that $v_0(x)=u_0(x+c)$ for fixed $c\in \R^d$. Then, since $\sigma$ and $\eta$ do not depend on $x$ explicitly, by uniqueness of the weak solution, one can conclude that $v_\eps(t,x)=u_\eps(t,x+c)$ and hence
 \begin{align*}
 \E\Big[ \int_{\R^d}  \frac{ \big|u_\eps(t,x)-v_\eps(t,x)\big|}{|c|}\,dx\Big]& \le \E\Big[ \int_{\R^d} \frac{\big|u_0(x)-u_0(x+c)\big|}{|c|}\,dx\Big]\le C, 
 \end{align*}
independent of $c$, if $u_0 \in BV(\R^d)$. This implies that, for any $t>0$, since $v_\eps(t,x)=u_\eps(t,x+c)$
 \begin{align}
\sup_{\eps>0} \E\Big[TV_x(u_\eps(t))] \le  \E\Big[TV_x(u_0)\Big] . \label{bv-bound}
 \end{align}
 This completes the proof.
 \end{proof}
 
 In view of the well-posedness results from \cite{BaVaWit_2014,BisMajVal}, one can conclude that, under the assumptions \ref{A1}-\ref{A6}, the family $\{u_\eps(t,x)\}_{\eps>0}$ converges to the Young measure valued narrow limit process $u(t,x,\varsigma),$ called generalized entropy solution which is indeed the unique entropy solution $u(t,x)$ of the underlying problem \eqref{eq:stoc_con_brown}. Now, our aim is to show that $u(t,x)$ is actually a spatial $BV$ solution of \eqref{eq:stoc_con_brown} provided the initial function $u_0$ lies in $L^2 \cap BV(\R^d)$. 
Since $u_\eps$ converges to $u$ weakly in $L^2(\Omega\times(0,T)\times\R^d)$, for any $R>0$, by convexity arguments, 
\begin{align*}
\E\Big[\int_{\Pi_T}|u| {\bf 1}_{B_{\R^d}(0,R)}\,dx\,dt\Big] \leq \lim\inf_\eps \E\Big[\int_{\Pi_T}|u_\eps| {\bf 1}_{B_{\R^d}(0,R)}\,dx\,dt\Big] \leq M,
\end{align*}
thanks to \eqref{L1-bound} and $u \in L^1(\Omega\times(0,T)\times\R^d)$.
In view of the lower semi-continuity property of $TV_x$ and 
Fatou's lemma, we have, for a.e. $t>0$,
\begin{align*}
 \E\Big[ TV_x(u(t))\Big] \le \liminf_{\eps \goto 0} \E\Big[ TV_x(u_\eps(t))\Big] \le \E \Big[ TV_x(u_0)\Big],
\end{align*}
where the last inequality follows from Theorem \eqref{thm:bv-viscous}. Thus, $u(t,x)$ is a function of bounded variation in spatial variable.
In other words, we have existence of BV entropy solution for the problem \eqref{eq:stoc_con_brown} given by the following theorem.
 \begin{thm} [BV entropy solution]\label{thm:existence-bv}
 Suppose that the assumptions \ref{A1}-\ref{A6} hold. Then there exists a constant $C>0$, and an unique entropy solution
 of \eqref{eq:stoc_con_brown} such that for a.e. $t>0$
\begin{align*}
   \E \Big[|u(t,\cdot)|_{BV(\R^d)} \Big] \le C\E \Big[|u_0|_{BV(\R^d)} \Big].
\end{align*}	
\end{thm}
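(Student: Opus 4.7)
The plan is to obtain the BV entropy solution $u$ as the vanishing viscosity limit of $\{u_\eps\}_{\eps>0}$ and then transfer the uniform estimate from Theorem \ref{thm:bv-viscous} to the limit by lower semi-continuity. First, I invoke the well-posedness results from \cite{BaVaWit_2014, BisMajVal}: under \ref{A1}--\ref{A6}, the viscous family $\{u_\eps\}$ provided by Proposition \ref{prop:vanishing viscosity-solution} converges, in the narrow Young measure sense along a subsequence, to a generalized entropy solution which, by the uniqueness theory developed there, reduces to a Dirac mass $\delta_{u(t,x)}$ supported on the unique entropy solution $u(t,x)$ of \eqref{eq:stoc_con_brown} in the sense of Definition \ref{defi:stochentropsol}. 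In particular, $u_\eps \to u$ weakly in $L^2(\Omega \times \Pi_T)$ and, after a further diagonal extraction, pointwise-in-$t$ weakly in $L^2(\Omega \times \R^d)$ for almost every $t \in (0,T)$.

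Next, Theorem \ref{thm:bv-viscous} supplies the two uniform-in-$\eps$ bounds
\[
\sup_{\eps>0} \E\bigl[\|u_\eps(t)\|_{L^1(\R^d)}\bigr] \le C\,\E\bigl[\|u_0\|_{L^1(\R^d)}\bigr], \qquad \sup_{\eps>0} \E\bigl[TV_x(u_\eps(t))\bigr] \le \E\bigl[TV_x(u_0)\bigr],
\]
valid for every $t>0$. Since the functional $u \mapsto \E[TV_x(u)]$ is convex and lower semi-continuous on $L^1(\Omega \times \R^d)$ (as recorded in the remark following Theorem \ref{thm:bv-viscous}), Fatou's lemma combined with the pointwise-in-$t$ convergence above gives, for a.e.\ $t \in (0,T)$,
\[
\E\bigl[TV_x(u(t,\cdot))\bigr] \le \liminf_{\eps \to 0} \E\bigl[TV_x(u_\eps(t,\cdot))\bigr] \le \E\bigl[TV_x(u_0)\bigr].
\]
An entirely analogous argument, using lsc of $u \mapsto \E[\|u\|_{L^1(B(0,R))}]$ for each ball $B(0,R) \subset \R^d$ followed by monotone convergence as $R \to \infty$, yields $\E[\|u(t,\cdot)\|_{L^1(\R^d)}] \le C\,\E[\|u_0\|_{L^1(\R^d)}]$. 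Summing the two inequalities delivers the asserted bound on $\E[|u(t,\cdot)|_{BV(\R^d)}]$.

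The only delicate issue is matching the narrow/space-time mode of convergence to a pointwise-in-$t$ statement compatible with the time-slice bounds of Theorem \ref{thm:bv-viscous}. Because those bounds hold for every $t$, and the viscous approximations are uniformly bounded in $L^\infty(0,T; L^2(\Omega \times \R^d))$ by Proposition \ref{prop:vanishing viscosity-solution}, a standard subsequence extraction (as carried out in \cite{BaVaWit_2014, BisMajVal}) makes the pointwise-in-$t$ passage rigorous. No estimate beyond Theorem \ref{thm:bv-viscous} is needed; the proof is essentially the combination of that theorem with the existence/uniqueness machinery already established in the cited references.
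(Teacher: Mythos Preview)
Your proposal is correct and follows essentially the same route as the paper: invoke the well-posedness theory of \cite{BaVaWit_2014,BisMajVal} to identify the vanishing-viscosity limit as the unique entropy solution, then transfer the uniform $L^1$ and $TV_x$ bounds from Theorem~\ref{thm:bv-viscous} to $u$ via weak convergence, lower semi-continuity of $TV_x$, and Fatou's lemma. Your discussion of the pointwise-in-$t$ extraction is slightly more explicit than the paper's, but the argument is the same.
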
 

\begin{rem}\label{estimGradAu}
Note that $\nabla A(u_\eps)$ converges to $\nabla A(u)$ weakly in $L^2(\Omega\times(0,T)\times\R^d)$. Moreover,
\begin{align*}
\E\Big[ \int_{\Pi_T} |\nabla A(u_\eps)|\Big] = \E\Big[ \int_{\Pi_T} A^\prime(u_\eps)|\nabla u_\eps|\Big] \leq \|A^\prime\|_\infty \E\Big[ \int_{\Pi_T} |\nabla u_\eps|\Big] \leq C
\end{align*}
thanks to the estimate of the total variation of $u_\eps$. Then, an argument similar to the above one concerning the sequence $u_\eps$ yields $\nabla A(u) \in L^1(\Omega\times(0,T)\times\R^d)$.
\end{rem}

 
 \section{Proof of the Main Theorem}
 \label{sec:main-thm}
It is worth mentioning that, the average $L^1$-contraction principle (cf. \cite{BaVaWit_2014, BisMajVal}) gives the continuous dependence on the initial data in stochastic balance laws of the type \eqref{eq:stoc_con_brown}. However, we intend to
establish continuous dependence also on the nonlinearities, i.e., on the flux functions and the noise coefficients.
To achieve that, we proceed as follows: 
For $\eps>0$, let $v_\eps$ be a weak solution to the problem 
  \begin{align}
  dv_\eps(s,y) -\Delta  B(v_\eps(s,y))\,ds - \mbox{div}_y g(v_\eps(s,y)) \,ds&=\widetilde{\sigma}(v_\eps(s,y))dW(s) + \int_{|z|>0} \widetilde{\eta}(v_\eps(s,y);z)\,\widetilde{N}(dz,ds) \notag \\
   & \hspace{4cm}+ \eps \Delta_{yy} v_{\eps}(s,y),\label{eq:viscous} \\
  v_\eps(0,y)&=v_0(y)\notag
  \end{align}
  and $u_\theta(t,x)$ be a weak solution to the viscous problem \eqref{eq:viscous-Brown} with small positive parameter $\theta$ which is
  different from $\eps$.
  In view of the Theorem \ref{thm:existence-bv}, we see that $v_\eps(s,y)$ converges to the unique BV entropy solution $v(s,y)$ 
  of \eqref{eq:stoc_con_brown-1} with initial data $v_0(y)$  and $u_\theta(t,x)$ converges to $u(t,x)$ which is 
   the unique BV entropy solution to the problem \eqref{eq:stoc_con_brown}. Our aim is to derive  expected value of
   the $L^1$-norm of $u-v$ and the proof is done by adapting the method of ``{\it doubling of variables}" to the stochastic case.
In \cite{BisKoleyMaj,Chen-karlsen_2012}, the authors directly compare one entropy solution $u(t,x)$ to
the viscous solutions $v_\eps(s,y)$ and then pass to the limit in a Kato's inequality. Due to lack of regularity
of the solution (see e.g. estimation of the term $\mathcal{A}_1$), here we compare one weak solution $u_\theta(t,x)$ to another weak solution $v_\eps(s,y)$ and then pass to the limits 
as viscous parameters tend to zero. This approach is somewhat different from the deterministic approach, where one can directly compare two entropy solutions. For deterministic continuous dependence theory consult \cite{perthame,cockburn,chenkarlsen_2005,kal-resibro} and references therein.

Note that, one can show that $v_\eps \in H^1(\R^d)$. However, to prove such Kato inequality (see \cite{BaVaWit_2014,BisMajKarl_2014}), one typically requires higher regularity of $v_\eps$. Therefore, we need to regularize $v_\eps$ by convolution.
Let $ \{\tau_\kappa\} $ be a sequence of mollifier in $\R^d$. Since $v_\eps$ is a viscous solution to the problem \eqref{eq:viscous}, one gets that $v_{\eps} \con \tau_\kappa$ is a solution to the problem
\begin{align}
   d (v_\eps \con \tau_\kappa) -\Delta (B(v_\eps)\con \tau_\kappa )\,ds =& \mbox{div}_y (g(v_\eps)\con \tau_\kappa) \,ds
   + (\widetilde{\sigma}( v_\eps)\con \tau_\kappa )\,dW(s) + \int_{|z|> 0} (\widetilde{\eta}( v_\eps;z)\con \tau_\kappa )\widetilde{N}(dz,ds)  \notag \\
   & + \eps \Delta( v_\eps \con \tau_\kappa(s,y))\,ds, \quad s>0, ~ y\in \R^d. \label{eq:viscous-regularize}\end{align}
Note that, $ \Delta (v_\eps \con \tau_\kappa) \in L^2(\Omega \times \Pi_T)$, for fixed $\eps>0$.

To proceed further, let $\rho$ and $\varrho$ be the standard mollifiers on $\R$ and  $\R^d$ respectively such that $\supp(\rho) \subset [-1,0)$ and $\supp(\varrho) = B_1(0)$. For $\delta > 0$ and $\delta_0 > 0$, let $\rho_{\delta_0}(r) = \frac{1}{\delta_0}\rho(\frac{r}{\delta_0})$ and
$\varrho_{\delta}(x) = \frac{1}{\delta^d}\varrho(\frac{x}{\delta})$. Let  $\psi\in C_c^{1,2}([0,\infty)\times \rd)$ be any nonnegative test function. For two positive constants $\delta, \delta_0 $, we define the test function        
  \begin{align}
  \label{eq:doubled-variable} \varphi_{\delta,\delta_0}(t,x, s,y) = \rho_{\delta_0}(t-s) \varrho_{\delta}(x-y) \psi(s,y). 
  \end{align}  
 Furthermore, let $\varsigma$ be the standard symmetric nonnegative mollifier on $\R$ with support in $[-1,1]$ and $\varsigma_l(r)= \frac{1}{l} \varsigma(\frac{r}{l})$ for $l > 0$.
We write down the It\^{o}-L\'{e}vy formula for weak solution $u_\theta(t,x)$ against the convex entropy flux triple $(\beta_\xi(\cdot-k), f^{\beta_\xi}(\cdot, k),A^{\beta_\xi}(\cdot,k))$, multiply by $\varsigma_l(v_\eps\con \tau_\kappa(s,y)-k)$ for $k\in \R$, and then integrate with respect to $ s, y$ and $k$. The result is, keeping in mind that $\beta=\beta_{\xi}$
 \begin{align}
 &  \theta \,\E\Big[\int_{\Pi_T^2} \int_{\R} \beta^{\prime \prime }(u_{\theta}(t,x)-k) |\grad u_\theta(t,x))|^2
 \varphi_{\delta,\delta_0}(t,x,s,y)\varsigma_l(v_\eps\con \tau_\kappa(s,y)-k)\,dk\,dx\,dt\,dy\,ds\Big]  \notag \\
 &  +  \E\Big[\int_{\Pi_T^2} \int_{\R} \beta^{\prime \prime }\big( u_\theta(t,x)-k\big) |\grad G(u_\theta(t,x))|^2
 \varphi_{\delta,\delta_0}(t,x,s,y)\varsigma_l(v_\eps\con \tau_\kappa(s,y)-k)\,dk\,dx\,dt\,dy\,ds\Big] \notag \\
  & \le  \E\Big[\int_{\Pi_T}\int_{\R^d}\int_{\R} \beta(u_0(x)-k)\varphi_{\delta,\delta_0}(0,x,s,y) \varsigma_l(v_{\eps}\con \tau_\kappa(s,y)-k)\,dk \,dx\,dy\,ds\Big] \notag \\
 & + \E\Big[ \int_{\Pi_T^2} \int_{\R} \beta(u_\theta(t,x)-k)\partial_t \varphi_{\delta,\delta_0}(t,x,s,y)
 \varsigma_l(v_{\eps}\con \tau_\kappa(s,y)-k)\,dk \,dx\,dt\,dy\,ds\Big]\notag \\ 
  & + \E\Big[\int_{\Pi_T^2}\int_{\R}  \sigma(u_\theta(t,x)) \beta^\prime (u_\theta(t,x)-k) \varphi_{\delta,\delta_0}(t,x,s,y)\varsigma_l(v_{\eps}\con \tau_\kappa(s,y)-k)\,dk \,dx\,dW(t) \,dy\,ds\Big]\notag\\
 & + \frac{1}{2}\E\Big[\int_{\Pi_T^2} \int_{\R} \sigma^2(u_\theta(t,x))
 \beta^{\prime\prime}(u_\theta(t,x)-k) \varphi_{\delta,\delta_0}(t,x,s,y) \varsigma_l(v_{\eps}\con \tau_\kappa(s,y)-k)dk\,dx\,dt\,dy\,ds\Big]\notag \\
 & - \E\Big[\int_{\Pi_T^2} \int_{\R}  f^\beta(u_\theta(t,x),k)\grad_x \varphi_{\delta,\delta_0}(t,x,s,y)
 \varsigma_l(v_{\eps}\con \tau_\kappa(s,y)-k)\,dk\,dx\,dt\,dy\,ds\Big] \notag \\
 & + \E\Big[\int_{\Pi_T^2} \int_{\R}  A^\beta(u_\theta(t,x),k)\Delta_x \varphi_{\delta,\delta_0}(t,x,s,y) \varsigma_l(v_{\eps}\con \tau_\kappa(s,y)-k)\,dk\,dx\,dt\,dy\,ds\Big] \notag \\
 &  - \theta \,\E\Big[\int_{\Pi_T^2} \int_{\R} \beta^{\prime}(u_{\theta}(t,x)-k) \grad_x u_{\theta}(t,x)\cdot \grad_x \varphi_{\delta,\delta_0}(t,x,s,y)
 \varsigma_l(v_\eps\con \tau_\kappa(s,y)-k)\,dk\,dx\,dt\,dy\,ds\Big] \notag \\
& + \E\Big[\int_{\Pi_T^2}\int_{|z|>0}\int_{\R} \int_0^1  \eta(u_\theta(t,x);z) \beta^\prime \big(u_\theta(t,x)+ \lambda  \eta(u_\theta(t,x);z)
  -k\big) \varphi_{\delta,\delta_0}(t,x,s,y) \notag \\
  & \hspace{7cm} \times \varsigma_l(v_{\eps}\con \tau_\kappa(s,y)-k)\,d\lambda\,dk \,\widetilde{N}(dz,dt)\,dx \,dy\,ds\Big]\notag\\
 &+ \E\Big[\int_{\Pi_T^2} \int_{\R} \int_{|z|>0} \int_0^1 (1-\lambda)\eta^2(u_\theta(t,x);z)
 \beta^{\prime\prime}\big(u_\theta(t,x)+\lambda \eta(u_\theta(t,x);z)-k\big) \varphi_{\delta,\delta_0}(t,x,s,y) \notag \\
 &  \hspace{7cm} \times \varsigma_l(v_{\eps}\con \tau_\kappa(s,y)-k)\,d\lambda\,m(dz)\,dx\,dt\,dy\,ds\Big]\notag \\
 & \text{ i.e.,} \qquad  I_{0,1} + I_{0,2} \le  I_1 + I_2 + I_3 +I_4 + I_5 + I_6 + I_7 +I_8 + I_9. \label{stoc_entropy_1}
 \end{align}
 Again we apply It\^{o} formula to \eqref{eq:viscous-regularize} and multiply with the test function $\varphi_{\delta,\delta_0}$
 and $\varsigma_l(u_{\theta}(t,x)-k)$. Taking expectation and integrating with respect to $k,t$ and $x$, the resulting inequality reads, for $\beta=\beta_{\xi}$ 
 
 \begin{align}
 &\eps \,\E\Big[\int_{\Pi_T^2} \int_{\R} \beta^{\prime \prime}(v_\eps\con \tau_\kappa(s,y) -k) |\grad(v_\eps\con \tau_\kappa)|^2
 \varphi_{\delta,\delta_0} \varsigma_l(u_\theta(t,x)-k)\,dk\,dx\,dt\,dy\,ds\Big] \notag \\
 +&  \E\Big[\int_{\Pi_T^2} \int_{\R} \beta^{\prime \prime}(v_\eps\con \tau_\kappa(s,y) -k)
 \grad( B(v_\eps)\con \tau_\kappa)\cdot \grad(v_\eps\con \tau_\kappa)
 \varphi_{\delta,\delta_0}(t,x,s,y) \varsigma_l(u_\theta(t,x)-k)\,dk\,dx\,dt\,dy\,ds\Big] \notag \\
 \le &  \E \Big[\int_{\Pi_T}\int_{\R^d}\int_{\R} \beta(v_\eps\con \tau_\kappa(0,y)-k)\varphi_{\delta,\delta_0}
 (t,x,0,y) \varsigma_l(u_\theta(t,x)-k)\,dk\,dx\,dy\,dt\Big] \notag \\
 & + \E \Big[ \int_{\Pi_T^2}\int_{\R} \beta(v_\eps\con \tau_\kappa(s,y)-k)\partial_s \varphi_{\delta,\delta_0}
 \varsigma_l(u_\theta(t,x)-k)\,dk \,dy\,ds\,dx\,dt\Big]\notag \\ 
 & + \E \Big[\int_{\Pi_T^2} \int_{\R}  (\widetilde{\sigma}(v_\eps)\con \tau_\kappa(s,y)) \beta^\prime(v_\eps\con \tau_\kappa(s,y)-k)
  \varphi_{\delta,\delta_0}(t,x,s,y)\varsigma_l(u_\theta(t,x)-k)\,dy\,dk \,dW(s)\,dx\,dt\Big]\notag\\
 & + \frac{1}{2} \E \Big[\int_{\Pi_T^2}  \int_{\R}  (\widetilde{\sigma}(v_\eps)\con \tau_\kappa(s,y))^2
 \beta^{\prime\prime}(v_\eps\con \tau_\kappa(s,y)-k) \varphi_{\delta,\delta_0}(t,x,s,y)\varsigma_l(u_\theta(t,x)-k)\,dy\,dk\,ds\,dx\,dt\Big]\notag \\
  &- \E \Big[\int_{\Pi_T^2} \int_{\R}\beta^\prime(v_\eps\con \tau_\kappa(s,y)-k) \grad(B(v_\eps)\con \tau_\kappa) 
 \grad_y \varphi_{\delta,\delta_0}(t,x,s,y)  \varsigma_l(u_\theta(t,x)-k)\,dk\,dx\,dt\,dy\,ds\Big] \notag \\
 & -\E \Big[\int_{\Pi_T^2}\int_{\R} \beta^\prime(v_\eps\con \tau_\kappa(s,y)-k)(g(v_\eps)\con \tau_\kappa(s,y)) \grad_y
 \varphi_{\delta,\delta_0}(t,x,s,y) \varsigma_l(u_\theta(t,x)-k)\,dk\,dx\,dt\,dy\,ds\Big] \notag \\
 &- \E \Big[\int_{\Pi_T^2} \int_{\R} \beta^{\prime\prime}(v_\eps\con \tau_\kappa(s,y)-k) (g(v_\eps)\con \tau_\kappa) \grad_y
 (v_\eps \con \tau_\kappa) \varphi_{\delta,\delta_0}(t,x,s,y) \varsigma_l(u_\theta(t,x)-k)\,dk\,dx\,dt\,dy\,ds\Big] \notag \\
 &- \eps\, \E \Big[\int_{\Pi_T^2} \int_{\R} \beta^\prime(v_\eps\con \tau_\kappa(s,y)-k)
 \grad_y (v_\eps\con \tau_\kappa)\cdot\grad_y  \varphi_{\delta,\delta_0}(t,x,s,y)\varsigma_l(u_\theta(t,x)-k)\,dk \,dy\,ds\,dx\,dt\Big] \notag \\
 & + \E \Big[\int_{\Pi_T^2} \int_{|z|>0} \int_{\R}\int_0^1  (\widetilde{\eta}(v_\eps;z)\con \tau_\kappa(s,y)) \,\beta_\xi^\prime\big(v_\eps\con \tau_\kappa(s,y) + \lambda
  (\widetilde{\eta}(v_\eps;z)\con \tau_\kappa(s,y))-k\big) \notag \\
  & \hspace{6cm} \times \varphi_{\delta,\delta_0}(t,x,s,y)\varsigma_l(u_\theta(t,x)-k)\,d\lambda\,dk\,\widetilde{N}(dz,ds)\,dy\,dx\,dt\Big]\notag\\
 & + \E \Big[\int_{\Pi_T^2}  \int_{\R}\int_{|z|>0}\int_0^1 (1-\lambda)  (\widetilde{\eta}(v_\eps;z)\con \tau_\kappa(s,y))^2
 \beta_\xi^{\prime\prime}\big(v_\eps\con \tau_\kappa(s,y) + \lambda (\widetilde{\eta}(v_\eps;z)\con \tau_\kappa(s,y)) -k\big) \notag \\
 &\hspace{6cm} \times \varphi_{\delta,\delta_0}(t,x,s,y)\varsigma_l(u_\theta(t,x)-k) \,d\lambda\,m(dz)\,dk\,dy\,ds\,dx\,dt\Big] \notag \\
 & \text{i.e.,} \qquad J_{0,1} + J_{0,2} \le  J_1 + J_2 + J_3 + J_4 + J_5 + J_6 + J_7 + J_8 + J_9 + J_{10}. \label{stoc_entropy_2}
 \end{align} 
Our aim is to add inequalities \eqref{stoc_entropy_1} and \eqref{stoc_entropy_2}, and pass to the limits with respect to the various parameters involved. We do this by claiming a series
of lemma's and proofs of these lemmas follow from \cite{BaVaWit_2014,BisMajKarl_2014} modulo cosmetic changes.

\begin{lem}\label{lem:additional degenerate-terms}
Let $\widetilde{G}(x)=\int_0^x \sqrt{B^{\prime}(r)}\,dr$. Then the following holds
\begin{align*}
 	\lim_{l\goto 0}\lim_{\kappa \goto 0}\lim_{\delta_0\goto 0} \big(I_{0,2} + J_{0,2}\big)
 	&= \E\Big[\int_{\Pi_T}\int_{\R^d} \beta_{\xi}^{\prime\prime} \big( u_\theta(t,x)-v_{\eps}(t,y)\big) 
 	\Big( |\grad G(u_\theta(t,x))|^2 + |\grad \widetilde{G}(v_\eps(t,y))|^2 \Big) \notag \\
 	& \hspace{6cm} \times \psi(t,y) \varrho_{\delta}(x-y) \,dx\,dy\,dt \Big].
\end{align*}
\end{lem}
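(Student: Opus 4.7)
The plan is to remove the three mollifying parameters in the prescribed order: $\delta_0 \to 0$ first, then $\kappa \to 0$, and finally $l \to 0$. At each stage I would invoke dominated convergence, with dominating functions furnished by Proposition~\ref{prop:vanishing viscosity-solution} (bounds for $\grad u_\theta$, $\grad G(u_\theta)$, $\grad v_\eps$) together with the pointwise bounds $\|\beta_\xi''\|_\infty \le M_2/\xi$, $\|\varsigma_l\|_\infty \le C/l$, and $\|B'\|_\infty < \infty$; all of $\eps, \theta, \xi, l, \kappa, \delta_0$ stay strictly positive throughout these manipulations.

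First, for fixed $l, \kappa > 0$, the $s$-dependence in $I_{0,2}$ and $J_{0,2}$ enters only through the kernel $\rho_{\delta_0}(t-s)$ and through the smoothed quantities $v_\eps \con \tau_\kappa(s,y)$, $\grad(v_\eps \con \tau_\kappa)(s,y)$, $\grad(B(v_\eps) \con \tau_\kappa)(s,y)$ and $\psi(s,y)$, each of which is continuous in $s$ as a map into $L^2(\Omega \times \R^d)$. Standard $L^1$-continuity of translations inside the expectation then identifies $s$ with $t$ in the $\delta_0 \to 0$ limit, producing the same integrands evaluated at $s=t$.

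Next, for $\kappa \to 0$, the contribution from $I_{0,2}$ is immediate: $v_\eps \con \tau_\kappa \to v_\eps$ strongly in $L^2(\Omega \times \Pi_T)$ and $\varsigma_l$ is continuous and uniformly bounded. For $J_{0,2}$, the nontrivial point is the identification
\begin{align*}
\grad(B(v_\eps) \con \tau_\kappa) \cdot \grad(v_\eps \con \tau_\kappa)
&= \bigl((B'(v_\eps)\grad v_\eps) \con \tau_\kappa\bigr) \cdot \bigl(\grad v_\eps \con \tau_\kappa\bigr) \\
&\longrightarrow B'(v_\eps) |\grad v_\eps|^2 = |\grad \widetilde G(v_\eps)|^2
\end{align*}
in $L^1(\Omega \times \Pi_T)$; this holds because both factors converge strongly in $L^2(\Omega \times \Pi_T)$, thanks to boundedness of $B'$ and to $v_\eps \in L^2(\Omega \times (0,T); H^1(\R^d))$ from Proposition~\ref{prop:vanishing viscosity-solution}.

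Finally, for the outermost limit $l \to 0$ I would use the convolution identity
\begin{align*}
\int_{\R} \beta_\xi''(u-k)\,\varsigma_l(v-k)\,dk = (\beta_\xi'' \star \varsigma_l)(u-v) \longrightarrow \beta_\xi''(u-v),
\end{align*}
valid pointwise, together with the uniform bound $\|\beta_\xi'' \star \varsigma_l\|_\infty \le \|\beta_\xi''\|_\infty$. Since $|\grad G(u_\theta)|^2$ and $|\grad \widetilde G(v_\eps)|^2$ both lie in $L^1(\Omega \times \Pi_T)$, dominated convergence closes the argument. I expect the main obstacle to be the $\kappa \to 0$ step for $J_{0,2}$: it is a commutator-type identification in which the chain rule is pushed through the convolution, requiring spatial $H^1$-regularity of $v_\eps$, boundedness of $B'$, and strong $L^2$-convergence of both $\grad v_\eps \con \tau_\kappa$ and $(B'(v_\eps)\grad v_\eps) \con \tau_\kappa$ to their natural limits — precisely the motivation for regularizing $v_\eps$ spatially before comparing it with $u_\theta$.
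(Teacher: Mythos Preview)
Your proposal is correct and follows precisely the standard approach the paper defers to its references \cite{BaVaWit_2014,BisMajKarl_2014}; the three limits are taken in the stated order, with the $\kappa\to 0$ step for $J_{0,2}$ hinging exactly on the $L^2$--$L^2$ product argument you describe (using $v_\eps\in L^2(\Omega\times(0,T);H^1(\R^d))$ and boundedness of $B'$), and the $l\to 0$ step on the convolution identity and evenness of $\beta_\xi''$.
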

 
\begin{lem}\label{lem:initial+time-terms}
It holds that $J_1=0$ and 
\begin{align*}
 \lim_{l\goto 0}\lim_{\kappa \goto 0}\lim_{\delta_0\goto 0} \big(I_1 + J_1\big)& =  \E \Big[\int_{\R^d \times \R^d}
 	\beta_{\xi}(u_0(x)-v_0(y))\psi(0,y)\varrho_{\delta} (x-y)\,dx\,dy\Big]\notag.\\
 	\lim_{l\goto 0}\lim_{\kappa \goto 0}\lim_{\delta_0\goto 0} \big(I_2 + J_2\big) 
 	&=  \E \Big[\int_{\Pi_T}\int_{\R^d} \beta_\xi \big(v_\eps(s,y)-u_\theta(s,x)\big) \partial_s\psi(s,y)
 	\, \varrho_\delta(x-y)\,dy\,dx\,ds\Big].
\end{align*}
\end{lem}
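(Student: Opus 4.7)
The plan is to treat the three assertions in turn, relying on three ingredients: the one-sided support $\supp\rho\subset[-1,0)$, the evenness of the mollifiers $\varsigma_l$ and $\beta_\xi$, and the right-continuity in time of the viscous approximations at $s=0$.

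For $J_1=0$, note that at $s=0$ the factor $\varphi_{\delta,\delta_0}(t,x,0,y)=\rho_{\delta_0}(t)\varrho_\delta(x-y)\psi(0,y)$ vanishes on the integration domain $t\in(0,T]$ because $\supp\rho_{\delta_0}\subset[-\delta_0,0)$; thus $J_1\equiv 0$ identically, with no limit procedure required. For the limit of $I_1$, the same one-sided support makes $\rho_{\delta_0}(-s)\,ds$ a probability measure on $(0,\delta_0]$ concentrating at $s=0^+$ as $\delta_0\to 0$. Since the viscous solution $v_\eps$ of \eqref{eq:viscous} is right-continuous in $L^2(\R^d)$ at $s=0$ with $v_\eps(0^+,\cdot)=v_0$ a.s.\ (a standard consequence of the well-posedness of the viscous SPDE, the deterministic instant $0$ being $\mathbb{P}$-a.s.\ not a jump time of the L\'evy-driven flow), a dominated convergence argument---with majorants coming from the boundedness of $\beta_\xi$, $\varsigma_l$ and the compact support of $\psi$---yields
\begin{align*}
\lim_{\delta_0\to 0}I_1 = \E\Big[\int_{\R^d\times\R^d}\int_{\R}\beta_\xi(u_0(x)-k)\,\varrho_\delta(x-y)\,\psi(0,y)\,\varsigma_l\big((v_0\con\tau_\kappa)(y)-k\big)\,dk\,dx\,dy\Big].
\end{align*}
Passing $\kappa\to 0$ replaces $v_0\con\tau_\kappa$ by $v_0$ (by $L^1$-convergence of mollification), and the final limit $l\to 0$ collapses $\int_{\R}\beta_\xi(u_0(x)-k)\varsigma_l(v_0(y)-k)\,dk$ to $\beta_\xi(u_0(x)-v_0(y))$ by the standard approximate-identity argument, producing the claimed expression.

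For $I_2+J_2$, compute
\begin{align*}
\partial_t\varphi_{\delta,\delta_0}&=\rho'_{\delta_0}(t-s)\varrho_\delta(x-y)\psi(s,y),\\
\partial_s\varphi_{\delta,\delta_0}&=-\rho'_{\delta_0}(t-s)\varrho_\delta(x-y)\psi(s,y)+\rho_{\delta_0}(t-s)\varrho_\delta(x-y)\partial_s\psi(s,y),
\end{align*}
and split $I_2+J_2=R_1+R_2$ into the $\rho'_{\delta_0}$-piece and the $\rho_{\delta_0}\partial_s\psi$-piece. The inner $k$-integrand of $R_1$ is
\begin{align*}
\int_{\R}\Big[\beta_\xi(u_\theta(t,x)-k)\,\varsigma_l\big((v_\eps\con\tau_\kappa)(s,y)-k\big)-\beta_\xi\big((v_\eps\con\tau_\kappa)(s,y)-k\big)\varsigma_l(u_\theta(t,x)-k)\Big]dk,
\end{align*}
which vanishes identically for every $l>0$: using the evenness of both $\beta_\xi$ and $\varsigma_l$, the substitutions $k\mapsto b-k$ in the first term and $k\mapsto a-k$ in the second show that both reduce to the same convolution $(\beta_\xi\con\varsigma_l)(u_\theta(t,x)-(v_\eps\con\tau_\kappa)(s,y))$. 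Hence $R_1\equiv 0$ exactly, bypassing the otherwise singular behaviour of $\rho'_{\delta_0}$ as $\delta_0\to 0$. For $R_2$, the measure $\rho_{\delta_0}(t-s)\,dt$ concentrates on $\{t=s^-\}$; the cadlag-in-time process $u_\theta$ has only countably many jump times for each $\omega$, hence a Lebesgue-null set in $s$, so $u_\theta(s^-,x)$ may be replaced by $u_\theta(s,x)$ under the $ds$-integral. The subsequent limits $\kappa\to 0$ and $l\to 0$ then proceed exactly as for $I_1$, and the evenness of $\beta_\xi$ yields the claimed integrand $\beta_\xi(v_\eps(s,y)-u_\theta(s,x))\partial_s\psi(s,y)\varrho_\delta(x-y)$.

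The main obstacle is the $\rho'_{\delta_0}$ contribution, whose $\delta_0\to 0$ limit would be singular; the algebraic mollifier identity above dispatches it in a single stroke, without any limit passage. The remaining technical points (cadlag-time regularity of $u_\theta$ and $v_\eps$, right-continuity at $s=0$) are standard consequences of the well-posedness of the viscous SPDE \eqref{eq:viscous-Brown}, and the convolutions with $\tau_\kappa$ provide the spatial smoothness needed to make the dominated-convergence manipulations rigorous.
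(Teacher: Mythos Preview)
Your argument is correct and matches the approach taken in the references \cite{BaVaWit_2014,BisMajKarl_2014} that the paper defers to (the paper itself provides no proof of this lemma beyond that citation). In particular, the exact cancellation $R_1\equiv 0$ via the evenness identity
\[
\int_{\R}\beta_\xi(a-k)\,\varsigma_l(b-k)\,dk=(\beta_\xi\ast\varsigma_l)(a-b)=(\beta_\xi\ast\varsigma_l)(b-a)=\int_{\R}\beta_\xi(b-k)\,\varsigma_l(a-k)\,dk
\]
is precisely the mechanism used there to bypass the singular $\rho'_{\delta_0}$ contribution, and your treatment of $J_1=0$, of the $\delta_0\to 0$ limit in $I_1$ and $R_2$ through right-continuity of the viscous solutions at $s=0$ (respectively, cadlag regularity in $t$), and of the subsequent $\kappa\to 0$, $l\to 0$ limits is the standard route.
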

 
\begin{lem}\label{lem:stochastic-terms}
We have $J_3=0 =J_9$ and the following hold:
\begin{align*}
 	 \lim_{l\goto 0}\lim_{\kappa \goto 0}\lim_{\delta_0\goto 0}	\Big( \big(I_3 + J_3\big) + \big(J_4 + I_4 \big)\Big)
 	 &= \frac{1}{2} \E\Big[\int_{\Pi_T}\int_{\R^d} \beta_\xi^{\prime\prime}
 	 \big(u_\theta(t,x)-v_{\eps}(t,y)\big) \big(\sigma(u_\theta(t,x))-\widetilde{\sigma}(v_\eps(t,y))\big)^2 \notag \\
 	 &  \hspace{5cm} \times \psi(t,y)\varrho_{\delta}(x-y)\,dx\,dy\,dt \Big],
 	\end{align*}
 	and 
  	\begin{align*}
 	 &\lim_{l\goto 0}\lim_{\kappa \goto 0}\lim_{\delta_0\goto 0}\Big(I_8 + I_9 + J_9 + J_{10}\Big) \notag \\
 	 &= \E\Big[\int_{\Pi_T}\int_{\R^d} \int_{|z|>0} \int_0^1 (1-\lambda) \beta_\xi^{\prime\prime}
 	 \Big(u_\theta(t,x)-v_{\eps}(t,y) + \lambda \big( \eta(u_\theta(t,x);z)- \widetilde{\eta}(v_\eps(t,y);z)\big)\Big) \notag \\
 	&  \hspace{4cm} \times \big( \eta(u_\theta(t,x);z)-\widetilde{\eta}(v_\eps(t,y);z)\big)^2
 	  \psi(t,y)\varrho_{\delta}(x-y) \,d\lambda\,m(dz)\,dx\,dy\,dt \Big].
\end{align*}
\end{lem}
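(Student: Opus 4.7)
The plan has three parts: the easy martingale cancellations $J_3=J_9=0$, a non-trivial Brownian cross-term analysis combining $I_3$ with $I_4+J_4$, and an analogous L\'{e}vy argument for $I_8+I_9+J_9+J_{10}$. For $J_3$ and $J_9$: since $\supp\rho\subset[-1,0)$, the factor $\rho_{\delta_0}(t-s)$ is supported in $s\in(t,t+\delta_0]$, so $s>t$ in the integration region. Hence $u_\theta(t,\cdot)$ is $\mathcal{F}_t\subset\mathcal{F}_s$-measurable, and after Fubini over $t,x,k$ the integrand of $dW(s)$ in $J_3$ (respectively of $\widetilde{N}(dz,ds)$ in $J_9$) is $(\mathcal{F}_s)$-predictable. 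The martingale property then forces $J_3=J_9=0$.

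For $I_3$, the integrand contains the non-predictable factor $\varsigma_l(V_s-k)$, with $V_r:=v_\eps\con\tau_\kappa(r,y)$, whose evaluation at $s>t$ is \emph{not} $\mathcal{F}_t$-measurable. I would split
\[
\varsigma_l(V_s-k)=\varsigma_l(V_t-k)+\bigl(\varsigma_l(V_s-k)-\varsigma_l(V_t-k)\bigr).
\]
The first summand is $\mathcal{F}_t$-measurable, so its contribution to $I_3$ is a genuine It\^{o} martingale with zero expectation. For the increment I would apply It\^{o}'s formula to $r\mapsto\varsigma_l(V_r-k)$ on $[t,s]$: its continuous martingale part is $\int_t^s\varsigma_l'(V_r-k)(\widetilde{\sigma}(v_\eps)\con\tau_\kappa)(r,y)\,dW(r)$, while the drift and compensated-jump pieces carry a factor $(s-t)\le\delta_0$ and will vanish in the end. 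Pairing the Brownian piece with the outer $dW(t)$ via the It\^{o} isometry $\E[\int_0^T X\,dW\int_0^s Y\,dW]=\E\int_0^s XY\,dt$ (valid for adapted $X,Y$), together with stochastic Fubini over $s,y,k,x$, reduces the cross-term to
\[
\E\int\sigma(u_\theta(t,x))\beta_\xi'(u_\theta(t,x)-k)(\widetilde{\sigma}(v_\eps)\con\tau_\kappa)(t,y)\varsigma_l'(V_t-k)\varphi_{\delta,\delta_0}(t,x,t,y)\,dk\,dx\,dy\,dt.
\]
Sending $\delta_0\goto 0$ collapses $\rho_{\delta_0}(t-s)$ onto $\{s=t\}$; $\kappa\goto 0$ removes the convolution; and $l\goto 0$, combined with the integration-by-parts identity $\int\beta_\xi'(a-k)\varsigma_l'(b-k)\,dk=\int\beta_\xi''(a-k)\varsigma_l(b-k)\,dk\goto\beta_\xi''(a-b)$, yields $\lim I_3=-\E\int\sigma(u_\theta)\widetilde{\sigma}(v_\eps)\beta_\xi''(u_\theta-v_\eps)\varrho_\delta\psi$. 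Since $I_4$ and $J_4$ are ordinary (predictable) Lebesgue integrals, the same triple limit (simpler, no cross-variation needed) gives $\lim I_4=\tfrac{1}{2}\E\int\sigma(u_\theta)^2\beta_\xi''(u_\theta-v_\eps)\varrho_\delta\psi$ and, by the symmetry $\beta_\xi''(-r)=\beta_\xi''(r)$, $\lim J_4=\tfrac{1}{2}\E\int\widetilde{\sigma}(v_\eps)^2\beta_\xi''(u_\theta-v_\eps)\varrho_\delta\psi$. Adding the three contributions telescopes into the perfect square $\tfrac{1}{2}(\sigma-\widetilde{\sigma})^2\beta_\xi''$.

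The L\'{e}vy statement for $I_8+I_9+J_9+J_{10}$ follows a structurally identical plan, with the Brownian isometry replaced by the compensated-Poisson analogue
\[
\E\Bigl[\int f(z,t)\widetilde{N}(dz,dt)\int g(z',r)\widetilde{N}(dz',dr)\Bigr]=\E\int f(z,t)g(z,t)\,m(dz)\,dt,
\]
which pairs the jump component in the It\^{o} decomposition of $\varsigma_l(V_s-k)-\varsigma_l(V_t-k)$ with the $\widetilde{N}(dz,dt)$ factor in $I_8$; the $(1-\lambda)\,d\lambda$ weight in the target expression arises naturally from the second-order integral Taylor remainder already built into $I_9$ and $J_{10}$. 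After the triple limit, the four terms combine to form the square in $\eta-\widetilde{\eta}$. The principal obstacle throughout is the rigorous treatment of the genuinely anticipating integrand of $I_3$ and $I_8$ (caused by $\varsigma_l(V_s-k)$ with $s>t$), which dictates the decomposition around $V_t$, It\^{o}'s formula on the small interval $[t,s]$, and the careful stochastic Fubini and It\^{o}-isometry bookkeeping, all justified uniformly in $(\delta_0,\kappa,l)$ via Proposition~\ref{prop:vanishing viscosity-solution} and, for the L\'{e}vy piece, the integrability $\int_{|z|>0}(1\wedge|z|^2)\,m(dz)<\infty$ from \ref{A6}.
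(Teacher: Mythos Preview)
Your proposal is correct and follows precisely the approach of the papers \cite{BaVaWit_2014,BisMajKarl_2014} to which the present article defers for the proof: the vanishing of $J_3,J_9$ via predictability, the decomposition $\varsigma_l(V_s-k)=\varsigma_l(V_t-k)+\text{(increment)}$ combined with the It\^{o}--L\'{e}vy formula on $[t,s]$, and the extraction of the cross term through the quadratic covariation (equivalently, the It\^{o} isometry) are exactly the ingredients used there. The only cosmetic remark is that the pairing step is most cleanly justified via the It\^{o} product formula applied to $t\mapsto\varsigma_l(V_t-k)\int_{s-\delta_0}^t X\,dW$, which makes the covariation term appear directly and avoids any apparent anticipating integral.
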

 
\begin{lem}\label{lem:flux-terms}
The following hold
\begin{align*}
 	\lim_{l\goto 0}\lim_{\kappa \goto 0}\lim_{\delta_0\goto 0} \big(J_6 + J_7\big) &= - \E\Big[\int_{\Pi_T}\int_{\R^d} g^{\beta_\xi} \big(v_\eps(t,y),u_\theta(t,x)\big) \cdot \grad_y [\psi(t,y)\varrho_{\delta}(x-y)]\,dx\,dy\,dt \Big]\\
 	\lim_{l\goto 0}\lim_{\kappa \goto 0}\lim_{\delta_0\goto 0} I_5 &= - \E\Big[\int_{\Pi_T}\int_{\R^d} f^{\beta_\xi} \big(u_\theta(t,x),v_\eps(t,y)\big) \cdot \grad_x [\psi(t,y)\varrho_{\delta}(x-y)]\,dx\,dy\,dt \Big]\\
 \lim_{l\goto 0}\lim_{\kappa \goto 0}\lim_{\delta_0\goto 0} \big(I_7 + J_8\big) &
 \le \frac{C}{\delta} \Big\{  \eps\, \E\Big[|v_0|_{BV(\R^d)}\Big] +  \theta\, \E\Big[|u_0|_{BV(\R^d)}\Big]\Big\}.	
\end{align*}
\end{lem}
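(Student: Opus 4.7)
The plan is to pass successively to the limits $\delta_0\to 0$, $\kappa\to 0$, $l\to 0$ in each of the three expressions, using standard properties of mollifiers together with the a priori estimates from Proposition~\ref{prop:vanishing viscosity-solution} and Theorem~\ref{thm:bv-viscous}, and at one critical place a chain-rule identity for the entropy flux.

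For $I_5$, the $\delta_0\to 0$ limit turns $\rho_{\delta_0}(t-s)$ into a Dirac mass on the diagonal $t=s$, reducing the double time integral to a single one; this is justified by Lebesgue's differentiation theorem together with the $L^2(\Omega\times\Pi_T)$-integrability of $u_\theta$ and $v_\eps$. As $\kappa\to 0$ the convolution $v_\eps\con\tau_\kappa\to v_\eps$ strongly in $L^2$ (and almost everywhere along a subsequence), and this passes into the bounded continuous factor $\varsigma_l$. Finally, as $l\to 0$ the family $\varsigma_l(v_\eps-k)\,dk$ concentrates at $k=v_\eps$; since $k\mapsto f^{\beta_\xi}(u_\theta,k)$ is Lipschitz with constant $\|f^\prime\|_\infty$ and the integrand is pointwise dominated by $C(|u_\theta|+|v_\eps|)|\grad_x\varphi|$, dominated convergence delivers the stated identity with $f^{\beta_\xi}(u_\theta,v_\eps)$.

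For $J_6+J_7$ the same three limits are taken, but a chain-rule identity is the key ingredient. The algebraic identity
\begin{equation*}
\beta_\xi^\prime(a-b)\,g(a)-\int_b^a \beta_\xi^{\prime\prime}(r-b)\,g(r)\,dr = g^{\beta_\xi}(a,b),
\end{equation*}
obtained by integrating by parts in the definition of $g^{\beta_\xi}$, together with the Carrillo-type chain rule $\beta_\xi^\prime(v_\eps-k)\grad B(v_\eps)=\grad_y B^{\beta_\xi}(v_\eps,k)$ and an integration by parts in $y$, lets the two contributions be combined into a single $g^{\beta_\xi}(v_\eps,u_\theta)\cdot\grad_y[\psi\,\varrho_\delta]$ term (the second-order pieces generated by the integrations by parts are offset against the degenerate dissipation handled in Lemma~\ref{lem:additional degenerate-terms}). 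The $\kappa\to 0$ limit relies on the strong $L^2$-convergences $\grad(B(v_\eps)\con\tau_\kappa)\to\grad B(v_\eps)$ and $g(v_\eps)\con\tau_\kappa\to g(v_\eps)$, both consequences of $v_\eps\in L^2(\Omega\times(0,T);H^1(\R^d))$ and the Lipschitz continuity of $B$ and $g$, while the $l\to 0$ limit is again handled by the Lipschitz continuity of $g^{\beta_\xi}$ in its second slot.

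For the bound on $I_7+J_8$, one uses $|\beta_\xi^\prime|\le 1$, factors out the small viscous parameters $\theta$ and $\eps$, and exploits the estimate $\|\grad_x\varrho_\delta\|_{L^1(\R^d)}\le C/\delta$. The remaining spatial gradients $\E\!\int_{\Pi_T}|\grad u_\theta|\,dx\,dt$ and $\E\!\int_{\Pi_T}|\grad v_\eps|\,dx\,dt$ are bounded by $T\,\E[|u_0|_{BV(\R^d)}]$ and $T\,\E[|v_0|_{BV(\R^d)}]$ respectively via Theorem~\ref{thm:bv-viscous}, producing the stated $C\delta^{-1}\bigl\{\eps\,\E[|v_0|_{BV}]+\theta\,\E[|u_0|_{BV}]\bigr\}$ bound. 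The principal obstacle is the careful interchange of the $\kappa\to 0$ limit with the degenerate chain rule for $B$: only $\grad\widetilde G(v_\eps)$ is a priori controlled in $L^2$, so $\grad B(v_\eps)=B^\prime(v_\eps)\grad v_\eps$ has to be treated via the uniform bound $\|B^\prime\|_\infty<\infty$ rather than pointwise, and one must confirm the smoothed chain rule on $v_\eps\con\tau_\kappa$ passes to the limit in the divergence form used above.
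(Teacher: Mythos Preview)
The paper does not actually prove this lemma; it simply writes ``proofs of these lemmas follow from \cite{BaVaWit_2014,BisMajKarl_2014} modulo cosmetic changes.'' So your outline is being compared against the standard argument in those references.

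Your treatments of $I_5$ and of $I_7+J_8$ are correct and match the standard route: mollifier limits plus dominated convergence for $I_5$, and for $I_7+J_8$ the bound $|\beta_\xi^\prime|\le 1$, $\|\grad\varrho_\delta\|_{L^1}\le C/\delta$, and the uniform spatial $BV$ estimate of Theorem~\ref{thm:bv-viscous}.

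Your argument for $J_6+J_7$, however, is muddled. These two terms involve only the flux $g$; the nonlinearity $B$ does not appear in them at all (that is $J_5$, handled in Lemma~\ref{lem:degenerate-terms}). There is no ``Carrillo-type chain rule for $B$'' to invoke here, and there are no ``second-order pieces offset against the degenerate dissipation of Lemma~\ref{lem:additional degenerate-terms}'': $J_6+J_7$ closes on its own. The correct mechanism is exactly the algebraic identity you wrote,
\[
g^{\beta_\xi}(a,b)=\beta_\xi^\prime(a-b)\,g(a)-\int_b^a \beta_\xi^{\prime\prime}(r-b)\,g(r)\,dr,
\]
combined with the chain rule $\beta_\xi^{\prime\prime}(v_\eps-k)\,g(v_\eps)\cdot\grad_y v_\eps=\grad_y\!\int_k^{v_\eps}\beta_\xi^{\prime\prime}(r-k)g(r)\,dr$ (valid because $v_\eps\in H^1(\R^d)$ and $g$ is Lipschitz) and an integration by parts in $y$. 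After $\kappa\to 0$ (using $g(v_\eps)\con\tau_\kappa\to g(v_\eps)$ and $v_\eps\con\tau_\kappa\to v_\eps$ in $L^2$, together with $\grad(v_\eps\con\tau_\kappa)\to\grad v_\eps$ in $L^2$), the sum $J_6+J_7$ collapses exactly to $-\E\int g^{\beta_\xi}(v_\eps,k)\cdot\grad_y\varphi_{\delta,\delta_0}\,\varsigma_l(u_\theta-k)\,dk\ldots$, and then $\delta_0\to 0$, $l\to 0$ proceed as for $I_5$. Remove the references to $B$ and to Lemma~\ref{lem:additional degenerate-terms} from this part of your sketch; with that correction your outline is fine.
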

 
\begin{lem}\label{lem:degenerate-terms}
Let $\beta=\beta_{\xi}$ as prescribed in Section \ref{sec:tech}. Let $B^\beta(a,b)=\int_b^a \beta^\prime(r-b)B^\prime(r)\,dr$. Then, we have the following:
 \begin{align*}
\lim_{l\goto 0}\lim_{\kappa \goto 0}\lim_{\delta_0\goto 0} I_6 &= \E\Big[\int_{\Pi_T}\int_{\R^d} A^\beta \big(u_\theta(t,x),v_\eps(t,y)\big)\Delta_x [\psi(t,y)\varrho_{\delta}(x-y)]\,dx\,dy\,dt \Big]\\
\lim_{l\goto 0}\lim_{\kappa \goto 0}\lim_{\delta_0\goto 0} J_5 &= \E\Big[\int_{\Pi_T}\int_{\R^d} B^\beta \big(v_\eps(t,y),u_\theta(t,x)\big)\Delta_y [\psi(t,y)\varrho_{\delta}(x-y)]\,dx\,dy\,dt \Big].
\end{align*}
\end{lem}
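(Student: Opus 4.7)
The plan is to send the three regularization parameters to zero successively, in the prescribed order $\delta_0 \to 0$, $\kappa \to 0$, $l \to 0$. Neither $I_6$ nor $J_5$ contains a stochastic integral in the variables being sent to zero, so the limits are deterministic (pathwise) and are passed under $\E$ by dominated convergence, using the uniform bounds provided by Proposition~\ref{prop:vanishing viscosity-solution}, the Lipschitz continuity of $A$ and $B$, the boundedness of $\beta^\prime$ and $\varsigma_l$, and the compact support in $(x,y)$ coming from $\varrho_\delta$. The essential contrast between the two terms is that for $J_5$ a Sobolev chain rule plus integration by parts in $y$ is required, whereas for $I_6$ the Laplacian already sits on the mollifier $\varrho_\delta(x-y)$.

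For $I_6$, since $\Delta_x \varphi_{\delta,\delta_0}(t,x,s,y) = \rho_{\delta_0}(t-s)\,\psi(s,y)\,\Delta_x\varrho_\delta(x-y)$ carries no derivative on $v_\eps \con \tau_\kappa$, the limit $\delta_0 \to 0$ collapses the $s$-integral onto the diagonal $s=t$ by standard Lebesgue-point arguments applied to the map $s \mapsto \int_\R A^\beta(u_\theta(t,x),k)\varsigma_l(v_\eps\con\tau_\kappa(s,y)-k)\,dk$. Then $\kappa \to 0$ follows from $v_\eps\con\tau_\kappa \to v_\eps$ strongly in $L^2(\Omega\times(0,T)\times\R^d)$ combined with the continuity and boundedness of $\varsigma_l$. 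Finally $l \to 0$ turns $\varsigma_l(v_\eps(t,y)-k)\,dk$ into a Dirac mass at $k = v_\eps(t,y)$, using the continuity of $k \mapsto A^\beta(u_\theta(t,x),k)$, yielding the claimed identity.

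For $J_5$, after $\delta_0 \to 0$ and $\kappa \to 0$ the integrand becomes
\[
-\beta^\prime(v_\eps(t,y)-k)\,B^\prime(v_\eps(t,y))\,\grad_y v_\eps(t,y)\cdot\grad_y[\psi(t,y)\varrho_\delta(x-y)]\,\varsigma_l(u_\theta(t,x)-k).
\]
For every fixed $k$ the first three factors coincide with $\grad_y B^\beta(v_\eps(t,y),k)$, by the Sobolev chain rule applied to the Lipschitz function $r \mapsto B^\beta(r,k)$. Since $\psi(t,y)\varrho_\delta(x-y)$ is compactly supported in $y$ and $v_\eps(t,\cdot) \in H^1(\R^d)$, integration by parts in $y$ is legitimate and produces $B^\beta(v_\eps(t,y),k)\,\Delta_y[\psi(t,y)\varrho_\delta(x-y)]$. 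Sending $l \to 0$ then replaces $k$ by $u_\theta(t,x)$, by continuity of $B^\beta$ in its second argument, and delivers the claimed formula.

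The main technical obstacle is the passage $\kappa \to 0$ in the gradient term of $J_5$: one must identify $\grad\bigl(B(v_\eps)\con\tau_\kappa\bigr)(s,y) \longrightarrow B^\prime(v_\eps(s,y))\,\grad_y v_\eps(s,y)$ strongly in $L^2(\Omega\times(0,T)\times\R^d)$. This follows because $v_\eps \in L^2(\Omega\times(0,T); H^1(\R^d))$ (Proposition~\ref{prop:vanishing viscosity-solution}) and $B$ is Lipschitz with bounded $B^\prime$, so the standard chain rule yields $\grad B(v_\eps) = B^\prime(v_\eps)\grad v_\eps$ in $L^2$, and convolution with $\tau_\kappa$ converges strongly on $L^2$. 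Once this identification is in place, combining it with the uniform Lipschitz bounds on the integrands and the compact support of $\varrho_\delta$ allows one to invoke dominated convergence and commute $\E$ with each successive limit, completing the proof.
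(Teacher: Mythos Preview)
Your argument is correct and follows precisely the standard route the paper has in mind when it refers these limit passages to \cite{BaVaWit_2014,BisMajKarl_2014}: Lebesgue-point collapse in $\delta_0$, strong mollifier convergence in $\kappa$, the Sobolev chain rule $\grad_y B^\beta(v_\eps,k)=\beta'(v_\eps-k)B'(v_\eps)\grad_y v_\eps$ followed by integration by parts for $J_5$, and finally $\varsigma_l\to\delta$ in $k$. The only imprecision is cosmetic: the compact support in $(x,y)$ comes from the combination of $\psi$ (compact in $y$) and $\varrho_\delta$ (compact in $x-y$), not from $\varrho_\delta$ alone.
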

Finally, note that $I_{0,1}, J_{0,1}$ are non-negative quantities. Now we are in a position to
add the inequalities \eqref{stoc_entropy_1} and \eqref{stoc_entropy_2}, and pass to the
limits $\lim_{l\goto 0}\lim_{\kappa \goto 0}\lim_{\delta_0\goto 0}$. Thanks to the Lemmas
\ref{lem:additional degenerate-terms}-\ref{lem:flux-terms} and Lemma \ref{lem:degenerate-terms}, we arrive at 
 \begin{align}
 0 \le& - 
 \E\Big[\int_{\Pi_T}\int_{\R^d} \beta_\xi^{\prime\prime}\big( u_\theta(t,x)-v_{\eps}(t,y)\big) \Big( |\grad G(u_\theta(t,x))|^2 + |\grad \widetilde{G}(v_\eps(t,y))|^2 \Big) 
  \psi(t,y)\varrho_{\delta}(x-y) \,dx\,dy\,dt \Big] \notag
   \\& +  
   \E\Big[\int_{\Pi_T}\int_{\R^d} A^{\beta_\xi} \big(u_\theta(t,x),v_\eps(t,y)\big)\Delta_x [\psi(t,y)\varrho_{\delta}(x-y)]\,dx\,dy\,dt \Big] \notag 
   \\& +  
   \E\Big[\int_{\Pi_T}\int_{\R^d} B^{\beta_\xi} \big(v_\eps(t,y),u_\theta(t,x)\big)\Delta_y [\psi(t,y)\varrho_{\delta}(x-y)]\,dx\,dy\,dt \Big] \notag 
   \\& + 
   \E\Big[\int_{\Pi_T}\int_{\R^d} \grad_y\cdot \big\{ g^{\beta_\xi} \big(v_\eps(t,y),u_\theta(t,x)\big)- f^{\beta_\xi} \big(u_\theta(t,x),v_\eps(t,y)\big) \big\}  \psi(t,y)\varrho_{\delta}(x-y)\,dx\,dy\,dt \Big] \notag 
   \\&  - 
   \E\Big[\int_{\Pi_T}\int_{\R^d} f^{\beta_\xi} \big(u_\theta(t,x),v_\eps(t,y)\big) \cdot \nabla_{y} \psi(t,y)\varrho_{\delta}(x-y)\,dx\,dy\,dt \Big] \notag 
   \\& +  \frac{1}{2} 
   \E\Big[\int_{\Pi_T}\int_{\R^d} \beta_\xi^{\prime\prime} \big(u_\theta(t,x)-v_{\eps}(t,y)\big) \big(\sigma(u_\theta(t,x))-\widetilde{\sigma}(v_\eps(t,y))\big)^2 \psi(t,y)\varrho_{\delta}(x-y)\,dx\,dy\,dt \Big]\notag 
   \\& +  
   \E \Big[\int_{\Pi_T}\int_{\R^d} \beta_\xi(v_\eps(s,y)-u_\theta(s,x)) \partial_s\psi(s,y)\varrho_\delta(x-y)\,dy\,dx\,ds\Big]\notag 
   \\& +  
   \E \Big[\int_{\R^d \times \R^d} \beta_\xi(u_0(x)-v_0(y))\psi(0,y)\varrho_{\delta} (x-y)\,dx\,dy\Big] \notag \\
   &+  \E\Big[\int_{\Pi_T}\int_{\R^d} \int_{|z|>0} \int_0^1 (1-\lambda) \beta_\xi^{\prime\prime}
 	 \Big(u_\theta(t,x)-v_{\eps}(t,y) + \lambda \big( \eta(u_\theta(t,x);z)- \widetilde{\eta}(v_\eps(t,y);z)\big)\Big) \notag \\
 	&  \hspace{4cm} \times \big( \eta(u_\theta(t,x);z)-\widetilde{\eta}(v_\eps(t,y);z)\big)^2
 	  \psi(t,y)\varrho_{\delta}(x-y) \,d\lambda\,m(dz)\,dx\,dy\,dt \Big] \notag \\
   & \hspace{5cm} + \frac{C}{\delta} \Big\{ \eps \, \E \Big[|v_0|_{BV(\R^d)}\Big] + \theta \, \E \Big[|u_0|_{BV(\R^d)}\Big]\Big\} \notag 
  \\& := \mathcal{A}_1 + \mathcal{A}_2 + \mathcal{A}_3 + \mathcal{A}_4 + \mathcal{A}_5 + \mathcal{A}_6
  + \mathcal{A}_7 + \mathcal{A}_8 + \mathcal{A}_9 + \frac{C}{\delta} \Big\{ \eps \, \E \Big[|v_0|_{BV(\R^d)}\Big] + \theta \, \E \Big[|u_0|_{BV(\R^d)}\Big]\Big\}. \label{stoc_entropy_3}
 \end{align}

Let us first consider $\mathcal{A}_1$. Note that for any $a,b \in \R, -(a^2 + b^2) \le - 2 ab$. Thus, in view of 
Lipschitz property of $G$ and $\widetilde{G}$ and the fact that $u_\theta(t,\cdot), v_\eps(t,\cdot) \in H^1(\R^d)$, we see that
\begin{align*}
\mathcal{A}_1=& 
- \E\Big[\int_{\Pi_T}\int_{\R^d} \beta_\xi^{\prime\prime}\big( u_\theta(t,x)-v_{\eps}(t,y)\big) \Big( |\grad G(u_\theta(t,x))|^2 + |\grad \widetilde{G}(v_\eps(t,y))|^2 \Big) 
  \psi(t,y)\varrho_{\delta}(x-y) \,dx\,dy\,dt \Big]
\\ = &
- \E\Big[\int_{\Pi_T}\int_{\R^d} \beta_\xi^{\prime\prime}\big( u_\theta(t,x)-v_{\eps}(t,y)\big) \Big( A'(u_\theta(t,x))|\grad u_\theta(t,x)|^2
+ B'(v_\eps(t,y))|\grad v_\eps(t,y)|^2 \Big) \\
  & \hspace{6cm} \times \psi(t,y)\varrho_{\delta}(x-y) \,dx\,dy\,dt \Big]
\\ \leq &
-2 \, \E\Big[\int_{\Pi_T}\int_{\R^d} \beta_\xi^{\prime\prime}\big( u_\theta(t,x)-v_{\eps}(t,y)\big) \Big( \sqrt{A'(u_\theta(t,x))}\sqrt{B'(v_\eps(t,y))}
\grad_x u_\theta(t,x)\grad_y v_\eps(t,y) \Big) \\
  & \hspace{8cm} \times  \psi(t,y)\varrho_{\delta}(x-y) \,dx\,dy\,dt \Big].
\end{align*}
Regarding the term $\mathcal{A}_2$, we have
\begin{align*}
\mathcal{A}_2=&  
   \E \Big[\int_{\Pi_T}\int_{\R^d} A^{\beta_\xi} \big(u_\theta(t,x),v_\eps(t,y)\big)\Delta_x [\psi(t,y)\varrho_{\delta}(x-y)]\,dx\,dy\,dt \Big]
   \\ =&
-   \E\Big[\int_{\Pi_T}\int_{\R^d} \beta^\prime_\xi(u_\theta(t,x)-v_\eps(t,y))A'(u_\theta(t,x))\nabla_x u_\theta(t,x)\nabla_x [\psi(t,y)\varrho_{\delta}(x-y)]\,dx\,dy\,dt \Big]
   \\ =&
\E\Big[\int_{\Pi_T}\int_{\R^d} \beta^\prime_\xi(u_\theta(t,x)-v_\eps(t,y))A'(u_\theta(t,x))\nabla_x u_\theta(t,x)\nabla_y [\psi(t,y)\varrho_{\delta}(x-y)]\,dx\,dy\,dt \Big]
   \\ &
-\E\Big[\int_{\Pi_T}\int_{\R^d} \beta^\prime_\xi(u_\theta(t,x)-v_\eps(t,y))A'(u_\theta(t,x))\nabla_x u_\theta(t,x)\varrho_{\delta}(x-y) \nabla_y\psi(t,y)\,dx\,dy\,dt \Big]
   \\ =&
\E\Big[\int_{\Pi_T}\int_{\R^d} \beta^{\prime\prime}_\xi(u_\theta(t,x)-v_\eps(t,y))A'(u_\theta(t,x))\nabla_x u_\theta(t,x)\nabla_y v_\eps(t,y) [\psi(t,y)\varrho_{\delta}(x-y)]\,dx\,dy\,dt \Big]
   \\ &
-\E\Big[\int_{\Pi_T}\int_{\R^d} \beta^\prime_\xi(u_\theta(t,x)-v_\eps(t,y))A'(u_\theta(t,x))\nabla_x u_\theta(t,x)\varrho_{\delta}(x-y) \nabla_y\psi(t,y)\,dx\,dy\,dt \Big]
\\:=& \mathcal{A}_{2,1} + \mathcal{A}_{2,2}.
\end{align*}
Similarly, for the term $\mathcal{A}_3$, we have
\begin{align*}
\mathcal{A}_3=&  
   \E\Big[\int_{\Pi_T}\int_{\R^d} B^{\beta_\xi} \big(v_\eps(t,y),u_\theta(t,x)\big)\Delta_y [\psi(t,y)\varrho_{\delta}(x-y)]\,dx\,dy\,dt \Big]
\\ = &
-\E\Big[\int_{\Pi_T}\int_{\R^d} \beta^\prime_\xi(v_\eps(t,y)-u_\theta(t,x))B'(v_\eps(t,y))\nabla_y v_\eps(t,y)\nabla_y [\psi(t,y)\varrho_{\delta}(x-y)]\,dx\,dy\,dt \Big]   
\\ = &
\E\Big[\int_{\Pi_T}\int_{\R^d} \beta^\prime_\xi(v_\eps(t,y)-u_\theta(t,x))B'(v_\eps(t,y))\nabla_y v_\eps(t,y)\nabla_x [\psi(t,y)\varrho_{\delta}(x-y)]\,dx\,dy\,dt \Big]   
\\&
-\E\Big[\int_{\Pi_T}\int_{\R^d} \beta^\prime_\xi(v_\eps(t,y)-u_\theta(t,x))B'(v_\eps(t,y))\nabla_y v_\eps(t,y)\nabla_y \psi(t,y) \varrho_{\delta}(x-y) \,dx\,dy\,dt \Big]   
\\ = &
\E\Big[\int_{\Pi_T}\int_{\R^d} \beta^{\prime\prime}_\xi(v_\eps(t,y)-u_\theta(t,x))B'(v_\eps(t,y))\nabla_y v_\eps(t,y)\nabla_x u_\theta(t,x) [\psi(t,y)\varrho_{\delta}(x-y)]\,dx\,dy\,dt \Big]   
\\&
-\E\Big[\int_{\Pi_T}\int_{\R^d} \beta^\prime_\xi(v_\eps(t,y)-u_\theta(t,x))B'(v_\eps(t,y))\nabla_y v_\eps(t,y)\nabla_y \psi(t,y) \varrho_{\delta}(x-y) \,dx\,dy\,dt \Big]   
\\:=& \mathcal{A}_{3,1} + \mathcal{A}_{3,2}.
\end{align*}
Thus,
\begin{align*}
&\mathcal{A}_{1} +  \mathcal{A}_{2,1} + \mathcal{A}_{3,1} 
\\ \leq &
-2 \,\E\Big[\int_{\Pi_T}\int_{\R^d} \beta_\xi^{\prime\prime}\big( u_\theta(t,x)-v_{\eps}(t,y)\big) \Big( \sqrt{A'(u_\theta(t,x))}\sqrt{B'(v_\eps(t,y))}\grad_x u_\theta(t,x)\grad_y
v_\eps(t,y) \Big) \\
   & \hspace{9cm} \times \psi(t,y)\varrho_{\delta}(x-y) \,dx\,dy\,dt \Big]
\\ & +
\E\Big[\int_{\Pi_T}\int_{\R^d} \beta^{\prime\prime}_\xi(u_\theta(t,x)-v_\eps(t,y))A'(u_\theta(t,x))\nabla_x u_\theta(t,x)\nabla_y v_\eps(t,y) [\psi(t,y)\varrho_{\delta}(x-y)]\,dx\,dy\,dt \Big]
\\&+
\E\Big[\int_{\Pi_T}\int_{\R^d} \beta^{\prime\prime}_\xi(v_\eps(t,y)-u_\theta(t,x))B'(v_\eps(t,y))\nabla_y v_\eps(t,y)\nabla_x u_\theta(t,x) [\psi(t,y)\varrho_{\delta}(x-y)]\,dx\,dy\,dt \Big] 
 \\ =&
\E\Big[\int_{\Pi_T}\int_{\R^d} \beta_\xi^{\prime\prime}\big( u_\theta(t,x)-v_{\eps}(t,y)\big)  \big[\sqrt{A'(u_\theta(t,x))}- \sqrt{B'(v_\eps(t,y))}\big]^2\grad_x u_\theta(t,x)\grad_y v_\eps(t,y)  \\
  & \hspace{9cm} \times \psi(t,y)\varrho_{\delta}(x-y) \,dx\,dy\,dt \Big], 
\end{align*}
if $\beta_\xi$ is chosen even. This implies that 
\begin{align*}
&\mathcal{A}_{1} +  \mathcal{A}_{2,1} + \mathcal{A}_{3,1} 
\\ \leq &
\E\Big[\int_{\Pi_T}\int_{\R^d} \nabla_x\Bigg\{\int_{v_\eps(t,y)}^{u_\theta(t,x)}\beta_\xi^{\prime\prime}\big( \tau-v_{\eps}(t,y)\big)  \big[\sqrt{A'(\tau)}- \sqrt{B'(v_\eps(t,y))}\big]^2\,d\tau\Bigg\} \\
& \hspace{8cm} \times \grad_y v_\eps(t,y) \psi(t,y)\varrho_{\delta}(x-y) \,dx\,dy\,dt \Big] 
\\ = &  
-\E\Big[\int_{\Pi_T}\int_{\R^d} \Bigg\{\int_{v_\eps(t,y)}^{u_\theta(t,x)}\beta_\xi^{\prime\prime}\big( \tau-v_{\eps}(t,y)\big)  \big[\sqrt{A'(\tau)}- \sqrt{B'(v_\eps(t,y))}\big]^2\,d\tau\Bigg\} \\
& \hspace{8cm} \times
\grad_y v_\eps(t,y)  \nabla_x\varrho_{\delta}(x-y)
  \psi(t,y) \,dx\,dy\,dt \Big] 
\\ \leq &  \Bigg|
2\, \E\Big[\int_{\Pi_T}\int_{\R^d} \Bigg\{\int_{v_\eps(t,y)}^{u_\theta(t,x)}\beta_\xi^{\prime\prime}\big( \tau-v_{\eps}(t,y)\big)  \big[\sqrt{A'(\tau)}- \sqrt{B'(\tau)}\big]^2\,d\tau\Bigg\} \\
& \hspace{8cm} \times
\grad_y v_\eps(t,y)  \nabla_x\varrho_{\delta}(x-y)
  \psi(t,y) \,dx\,dy\,dt \Big] 
  \\+ &
  2\, \E\Big[\int_{\Pi_T}\int_{\R^d} \Bigg\{\int_{v_\eps(t,y)}^{u_\theta(t,x)}\beta_\xi^{\prime\prime}\big( \tau-v_{\eps}(t,y)\big)  \big[\sqrt{B'(\tau)}- \sqrt{B'(v_\eps(t,y))}\big]^2\,d\tau\Bigg\} \\
 & \hspace{8cm} \times \grad_y v_\eps(t,y)
  \nabla_x\varrho_{\delta}(x-y) \psi(t,y) \,dx\,dy\,dt \Big] \Bigg| 
\\ \leq &  
C(\|A'-B'\|_\infty+\xi) \E\Big[\int_{\Pi_T}\int_{\R^d}  |\beta_\xi^{\prime}\big( u_\theta(t,x)-v_{\eps}(t,y)| |\grad_y v_\eps(t,y)|  |\nabla_x\varrho_{\delta}(x-y)|
  \psi(t,y) \,dx\,dy\,dt \Big], 
\end{align*}
where we have used that $|\sqrt{x}-\sqrt{y}| \leq \sqrt{2|x-y|}$ and the Lipschitz continuity of $B'$. Thus, 
\begin{align*}
\mathcal{A}_{1} +  \mathcal{A}_{2,1} + \mathcal{A}_{3,1} 
 \leq &
C(\|A'-B'\|_\infty+\xi) \E \Big[\int_{\Pi_T}  \|\psi(t, \cdot)\|_\infty\, |\grad_y v_\eps(t,y)| \int_{\R^d} |\nabla_x\varrho_{\delta}(x-y)| \,dx\,dy\,dt \Big]
\\
\leq & C\frac{(\|A'-B'\|_\infty+\xi)}{\delta} \int_{0}^T ||\psi(t,\cdot)||_{L^\infty(\R^d)}\,dt,
\end{align*}
thanks to the uniform BV estimate of $v_\eps$.

Now, since
\begin{align*}
\mathcal{A}_{2,2}+\mathcal{A}_{3,2}=& -\E\Big[\int_{\Pi_T}\int_{\R^d} \beta^\prime_\xi(u_\theta(t,x)-v_\eps(t,y))A'(u_\theta(t,x))\nabla_x u_\theta(t,x)\varrho_{\delta}(x-y) \nabla_y\psi(t,y)\,dx\,dy\,dt \Big]
\\&
-\E\Big[\int_{\Pi_T}\int_{\R^d} \beta^\prime_\xi(v_\eps(t,y)-u_\theta(t,x))B'(v_\eps(t,y))\nabla_y v_\eps(t,y)\nabla_y \psi(t,y) \varrho_{\delta}(x-y) \,dx\,dy\,dt \Big], 
\end{align*}
we have proved the following lemma.
\begin{lem}\label{estim_A1_3}
\begin{align}\label{esti:a1+a2+a3-final}
&\mathcal{A}_{1}+\mathcal{A}_{2}+\mathcal{A}_{3}  \leq 
C\frac{(\|A'-B'\|_\infty+\xi)}{\delta}  \int_0^T \|\psi(t,\cdot)\|_{L^\infty(\R^d)} \,dt \notag
\\& -\E\Big[\int_{\Pi_T}\int_{\R^d} \beta^\prime_\xi(u_\theta(t,x)-v_\eps(t,y))[\nabla_x A(u_\theta(t,x)) -\nabla_y B(v_\eps(t,y))]  \varrho_{\delta}(x-y) \nabla_y\psi(t,y)\,dx\,dy\,dt \Big]. 
\end{align}
\end{lem}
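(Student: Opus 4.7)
The plan is to isolate, inside $\mathcal{A}_1+\mathcal{A}_2+\mathcal{A}_3$, a ``good'' quadratic combination of the Kirchhoff--type gradients that can be rewritten as a pure $x$-derivative, so that one extra integration by parts against $\varrho_\delta(x-y)$ converts the problematic second--order $H^1$-type integrals into a $BV$ controlled quantity that costs only one factor of $1/\delta$.

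First, for $\mathcal{A}_1$, I would use $|\grad G(u_\theta)|^2=A'(u_\theta)|\grad_x u_\theta|^2$, $|\grad\widetilde{G}(v_\eps)|^2=B'(v_\eps)|\grad_y v_\eps|^2$ and the elementary bound $-(a^2+b^2)\le -2ab$ with $a=\sqrt{A'(u_\theta)}\,\grad_x u_\theta$, $b=\sqrt{B'(v_\eps)}\,\grad_y v_\eps$ to obtain a cross gradient bound. For $\mathcal{A}_2$ and $\mathcal{A}_3$, the chain rule $\grad_x A^{\beta_\xi}(u_\theta,v_\eps)=\beta'_\xi(u_\theta-v_\eps)A'(u_\theta)\grad_x u_\theta$ and integration by parts lets me split each into two pieces by means of the identity
\begin{equation*}
\grad_x[\psi(t,y)\varrho_\delta(x-y)] = -\grad_y[\psi(t,y)\varrho_\delta(x-y)] + \varrho_\delta(x-y)\grad_y\psi(t,y),
\end{equation*}
and analogously with the roles of $x$ and $y$ exchanged. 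A second integration by parts in $y$ on the pieces containing $\grad_y[\psi\varrho_\delta]$ produces the ``symmetric'' parts $\mathcal{A}_{2,1}$ and $\mathcal{A}_{3,1}$, each carrying a factor $\beta''_\xi(u_\theta-v_\eps)\,\grad_x u_\theta\cdot\grad_y v_\eps\,\psi\varrho_\delta$ with coefficient $A'(u_\theta)$ and $B'(v_\eps)$ respectively. The remaining pieces $\mathcal{A}_{2,2}+\mathcal{A}_{3,2}$ combine, via $A'(u_\theta)\grad_x u_\theta=\grad_x A(u_\theta)$ and $B'(v_\eps)\grad_y v_\eps=\grad_y B(v_\eps)$, into the last line of the claimed inequality.

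The crux is the combined term $\mathcal{A}_1+\mathcal{A}_{2,1}+\mathcal{A}_{3,1}$, whose integrand assembles into
\begin{equation*}
\beta''_\xi(u_\theta-v_\eps)\bigl(\sqrt{A'(u_\theta)}-\sqrt{B'(v_\eps)}\bigr)^2 \grad_x u_\theta\cdot\grad_y v_\eps\,\psi\,\varrho_\delta(x-y).
\end{equation*}
The decisive trick is to recognize this as an exact $x$-gradient, namely
\begin{equation*}
\grad_x\!\int_{v_\eps}^{u_\theta}\beta''_\xi(\tau-v_\eps)\bigl(\sqrt{A'(\tau)}-\sqrt{B'(v_\eps)}\bigr)^2 d\tau \;\cdot\; \grad_y v_\eps\,\psi\,\varrho_\delta(x-y),
\end{equation*}
so that a further integration by parts transfers $\grad_x$ onto $\varrho_\delta(x-y)$. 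The primitive is estimated via the triangle inequality
$(\sqrt{A'(\tau)}-\sqrt{B'(v_\eps)})^2\le 2(\sqrt{A'(\tau)}-\sqrt{B'(\tau)})^2+2(\sqrt{B'(\tau)}-\sqrt{B'(v_\eps)})^2$, bounding the first summand by $2\|A'-B'\|_\infty$ through $|\sqrt{a}-\sqrt{b}|\le\sqrt{2|a-b|}$ and the second by the Lipschitz character of $B'$ (guaranteed by \ref{A2} since $B''\in L^\infty$); after integrating $\beta''_\xi$, this yields a factor $(\|A'-B'\|_\infty+\xi)|\beta'_\xi(u_\theta-v_\eps)|\le\|A'-B'\|_\infty+\xi$. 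The remaining $L^1$-type integral is controlled by $\int_{\R^d}|\grad_x\varrho_\delta|\,dx=C/\delta$ together with the uniform spatial $BV$ bound on $v_\eps$ from Theorem \ref{thm:bv-viscous}, producing the $C(\|A'-B'\|_\infty+\xi)/\delta$ factor.

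The main obstacle is precisely this primitive--function reformulation: a naive Cauchy--Schwarz on the cross term $\grad_x u_\theta\cdot\grad_y v_\eps$ would produce $|\grad u_\theta|^2$ and $|\grad v_\eps|^2$ contributions, whose $L^1(\Omega\times\Pi_T)$ bounds degenerate as $\theta,\eps\to 0$. By converting the quadratic gradient structure into a single derivative that is then absorbed by $\grad_x\varrho_\delta$, the error is paid only against the uniform $BV$ norm of $v_\eps$, which is stable in the vanishing--viscosity limit. Collecting the cross--gradient cancellation, the primitive estimate for $\mathcal{A}_1+\mathcal{A}_{2,1}+\mathcal{A}_{3,1}$, and the recombination of $\mathcal{A}_{2,2}+\mathcal{A}_{3,2}$ yields \eqref{esti:a1+a2+a3-final}.
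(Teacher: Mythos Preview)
Your proposal is correct and follows essentially the same approach as the paper's own proof: the same $-(a^2+b^2)\le -2ab$ bound on $\mathcal{A}_1$, the same splitting of $\mathcal{A}_2,\mathcal{A}_3$ into $\mathcal{A}_{i,1}+\mathcal{A}_{i,2}$ via the identity $\grad_x(\psi\varrho_\delta)=-\grad_y(\psi\varrho_\delta)+\varrho_\delta\grad_y\psi$, the same recombination of the cross terms into the perfect square $(\sqrt{A'(u_\theta)}-\sqrt{B'(v_\eps)})^2$, the same primitive-function trick to move one $x$-derivative onto $\varrho_\delta$, and the same triangle-inequality split with $|\sqrt a-\sqrt b|\le\sqrt{2|a-b|}$ and the Lipschitz bound on $B'$ to isolate the $\|A'-B'\|_\infty+\xi$ factor before invoking the uniform $BV$ bound on $v_\eps$. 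Your identification of the primitive reformulation as the key device to avoid the $\eps,\theta$-degenerate $H^1$ bounds is exactly the point of the argument.
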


Let us consider the term $\mathcal{A}_4$. We first rewrite $\mathcal{A}_4$ as 
\begin{align*}
 \mathcal{A}_4 =&\E\Big[\int_{\Pi_T}\int_{\R^d} \grad_y\cdot \bigg\{ g^{\beta_\xi} \big(v_\eps(t,y),u_\theta(t,x)\big)- f^{\beta_\xi} \big(u_\theta(t,x),v_\eps(t,y)\big) \bigg\}  \psi(t,y)\varrho_{\delta}(x-y)\,dx\,dy\,dt \Big] 
 \\=&
 - \E \Big[\int_{\Pi_T}\int_{\R_y^d} \nabla_y v_\eps(s,y) \cdot \partial_v \big(
 f^\beta(u,v)-g^\beta(v,u)\big)\Big|_{(u,v)=(u_\theta(s,x),v_\eps(s,y))}
 \psi(s,y)\varrho_\delta(x-y)\,dy\,dx\,ds\Big].
 \end{align*}
 Therefore, to estimate $\mathcal{A}_4$, it is required to estimate $ \partial_v \big(f^\beta(u,v)-g^\beta(v,u)\big)\Big|_{(u,v)=(u_\theta(s,x),v_\eps(s,y))}$. Note that, by our choice of $\beta=\beta_\xi$, one has
  \begin{align}
  \Big| \frac{\partial}{\partial v}\Big(f^{\beta_\xi}(u,v)-f^{\beta_\xi}(v,u)\Big)\Big|
  &=\Big|- f^{\prime}(v)\beta_\xi^{\prime}(v-u) - f^{\prime}(v)\beta_\xi^{\prime}(0) + 
  \int_{s=u}^v \beta_\xi^{\prime\prime}(s-v) f^\prime(s)\,ds\Big| \notag  \\
  &=\Big| \big(f^{\prime}(v)-f^{\prime}(u) \big)\beta_\xi^{\prime}(u-v) -
  \int_{s=u}^v \beta_\xi^{\prime}(s-v) f^{\prime\prime}(s)\,ds\Big| \notag \\
  & = \Big| \int_{u}^v \Big(\beta_\xi^{\prime}(u-v) -\beta_\xi^{\prime}(s-v) \Big) f^{\prime\prime}(s)\,ds \Big|
  \le M_2\,\xi\, ||f^{\prime\prime}||_{\infty}.\label{esti:deri-entropy-flux}
\end{align} 
Again, it is evident that, for any $u\in \R$
  \begin{align}
  \Big| \frac{\partial}{\partial v}\Big(f^\beta(v,u)-g^\beta(v,u)\Big)\Big| =  
   \Big| \beta_{\xi}^{\prime}(v-u) \big( f^\prime(v)-g^\prime(v)\big)\Big|
  \le |f^\prime(v)-g^\prime(v)|.  \label{esti:deri-different-entropy-flux}
  \end{align}
  Therefore, by \eqref{esti:deri-entropy-flux} and \eqref{esti:deri-different-entropy-flux}, we obtain
  \begin{align}
  \Big| \frac{\partial}{\partial v}\Big(f^\beta(u,v)-g^\beta(v,u)\Big)\Big|\le  M_2\,\xi\, ||f^{\prime\prime}||_{\infty}
  + |f^\prime(v)-g^\prime(v)| \label{esti:deri-different-entropy-flux-1}
  \end{align}
  Using uniform spatial BV bound and the estimate \eqref{esti:deri-different-entropy-flux-1}, we obtain 
\begin{lem}\label{estim_A4}
  \begin{align}
\mathcal{A}_4 \le  \E\Big[|v_0|_{BV(\R^d)}\Big] \Big( M_2\,\xi\, ||f^{\prime\prime}||_{\infty}
  + ||f^\prime-g^\prime||_{\infty}\Big) \int_{s=0}^T ||\psi(s,\cdot)||_{L^\infty(\R^d)}\,ds.\label{esti:a4-final}
  \end{align}
\end{lem}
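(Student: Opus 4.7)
The plan is to apply the pointwise estimate \eqref{esti:deri-different-entropy-flux-1} inside the rewriting of $\mathcal{A}_4$ displayed just before the lemma and then invoke the uniform spatial $BV$ bound for $v_\eps$. Precisely, starting from
\[
\mathcal{A}_4=-\,\E\Big[\int_{\Pi_T}\!\int_{\R^d}\nabla_y v_\eps(s,y)\cdot\partial_v\bigl(f^{\beta_\xi}(u,v)-g^{\beta_\xi}(v,u)\bigr)\Big|_{(u,v)=(u_\theta(s,x),v_\eps(s,y))}\psi(s,y)\varrho_\delta(x-y)\,dy\,dx\,ds\Big],
\]
I would first bound $\mathcal{A}_4\le|\mathcal{A}_4|$, move the absolute value inside the integral, and then use \eqref{esti:deri-different-entropy-flux-1} together with the trivial bound $|f'(v)-g'(v)|\le\|f'-g'\|_\infty$ to dominate the absolute value of the partial derivative by the deterministic constant $M_2\,\xi\,\|f''\|_\infty+\|f'-g'\|_\infty$, which depends neither on $(s,x,y)$ nor on $\omega$.

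After this, the three integrals decouple cleanly. Dominating $\psi(s,y)$ by $\|\psi(s,\cdot)\|_{L^\infty(\R^d)}$ and integrating $\varrho_\delta(x-y)$ over $x$ (with total mass one) reduces the task to estimating
\[
\int_0^T\|\psi(s,\cdot)\|_{L^\infty(\R^d)}\,\E\Big[\int_{\R^d}|\nabla_y v_\eps(s,y)|\,dy\Big]\,ds.
\]
Since $v_\eps(s,\cdot)\in H^1(\R^d)$ almost surely, the inner spatial integral equals $TV_x(v_\eps(s))$, and the uniform spatial $BV$ bound of Theorem~\ref{thm:bv-viscous} applied to the viscous approximation \eqref{eq:viscous} yields $\E[TV_x(v_\eps(s))]\le\E[|v_0|_{BV(\R^d)}]$ independently of $\eps$. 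Plugging this back produces exactly \eqref{esti:a4-final}.

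There is no genuine obstacle here: the delicate work has already been carried out in deriving \eqref{esti:deri-different-entropy-flux-1} (which exploits $\|f''\|_\infty<\infty$ and $|\beta_\xi'|\le 1$) and in establishing the uniform $BV$ bound in Theorem~\ref{thm:bv-viscous}. The only routine care required is the application of Fubini's theorem, legitimate because $\psi\in C_c^{1,2}$ has compact support, $\varrho_\delta$ is compactly supported, and $\nabla v_\eps\in L^1(\Omega\times\Pi_T)$ by the spatial $BV$ estimate.
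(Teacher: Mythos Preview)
Your proposal is correct and follows essentially the same approach as the paper: rewrite $\mathcal{A}_4$ via the chain rule, apply the pointwise bound \eqref{esti:deri-different-entropy-flux-1}, integrate out $\varrho_\delta$, and invoke the uniform spatial $BV$ bound for $v_\eps$ from Theorem~\ref{thm:bv-viscous}. The paper's proof is in fact just the single sentence ``Using uniform spatial BV bound and the estimate \eqref{esti:deri-different-entropy-flux-1}, we obtain'', so your write-up is, if anything, more detailed than the original.
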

\noindent Next, regarding the term $\mathcal{A}_5$, we have
\begin{lem}\label{estim_A5}
\begin{align}
\mathcal{A}_5  &= - 
   \E\Big[\int_{\Pi_T}\int_{\R^d} f^{\beta_\xi} \big(u_\theta(t,x),v_\eps(t,y)\big) \cdot \grad_y \psi(t,y)\varrho_{\delta}(x-y)\,dx\,dy\,dt \Big] 
.\label{esti:a5-final}
\end{align} 
\end{lem}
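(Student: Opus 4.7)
The claim of Lemma \ref{estim_A5} is an identification rather than a genuine estimate: the right-hand side is literally the quantity labeled $\mathcal{A}_5$ in the decomposition \eqref{stoc_entropy_3}. My plan is therefore simply to verify this bookkeeping by retracing the relevant portion of the limit procedure $\lim_{l \to 0}\lim_{\kappa \to 0}\lim_{\delta_0 \to 0}$ carried out in Lemma \ref{lem:flux-terms}.

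Concretely, starting from
$$\lim I_5 = -\,\E\Big[\int_{\Pi_T}\int_{\R^d} f^{\beta_\xi}\big(u_\theta(t,x),v_\eps(t,y)\big)\cdot \grad_x\big[\psi(t,y)\varrho_\delta(x-y)\big]\,dx\,dy\,dt\Big],$$
I would first exploit that $\psi$ is independent of $x$ to replace $\grad_x[\psi\,\varrho_\delta] = \psi\,\grad_x\varrho_\delta = -\psi\,\grad_y\varrho_\delta(x-y)$. Adding the limit expression for $J_6+J_7$ obtained in Lemma \ref{lem:flux-terms} and applying the product rule $\grad_y[\psi\,\varrho_\delta] = (\grad_y\psi)\varrho_\delta + \psi\,\grad_y\varrho_\delta$ splits the combined flux contribution into two pieces: one in which $\grad_y$ acts on the mollifier (which is collected together with the rewritten $\lim I_5$ to form $\mathcal{A}_4$), and one in which $\grad_y$ acts on the test function $\psi$, which is exactly the expression asserted for $\mathcal{A}_5$.

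The only mild subtlety is notational, namely keeping careful track of signs and of which variable each derivative acts on; no new analytic input beyond Lemma \ref{lem:flux-terms} is required, and in particular no singular $\delta^{-1}$ factor is generated since $\grad_y\psi$ remains bounded independently of $\delta$. I would not attempt any further simplification of $\mathcal{A}_5$ at this stage: its current form is already convenient for the subsequent estimation, where it will be controlled via the Lipschitz bound $|f^{\beta_\xi}(a,b)| \le \|f'\|_\infty |a-b|$ together with the uniform BV estimate of Theorem \ref{thm:bv-viscous}, yielding a contribution proportional to $\|\grad\psi\|_\infty$ and independent of $\eps, \theta, \delta$.
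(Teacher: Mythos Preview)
Your proposal is correct and matches the paper's treatment: the paper gives no proof for this lemma because, as you observe, \eqref{esti:a5-final} is nothing more than the definition of $\mathcal{A}_5$ in the decomposition \eqref{stoc_entropy_3}. Your retracing of how $\mathcal{A}_4$ and $\mathcal{A}_5$ arise from $\lim I_5$ and $\lim(J_6+J_7)$ via the identity $\grad_x\varrho_\delta(x-y)=-\grad_y\varrho_\delta(x-y)$ and the product rule is accurate and is precisely the (implicit) bookkeeping behind \eqref{stoc_entropy_3}.
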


\noindent Regarding $\mathcal{A}_6$, in view of the definition of the 
\begin{align*}
 \mathcal{E}(\sigma, \widetilde{\sigma}):= \sup_{\xi \neq0} \frac{|\sigma(\xi)-\widetilde{\sigma}(\xi)|}{|\xi|},
\end{align*}
assumption \ref{A4}, and Remark~\ref{beta}, we see that 
\begin{align*}
\mathcal{A}_6  =& \frac{1}{2} 
   \E\Big[\int_{\Pi_T}\int_{\R^d} \beta_\xi^{\prime\prime} \big(u_\theta(t,x)-v_{\eps}(t,y)\big) \big(\sigma(u_\theta(t,x))-\widetilde{\sigma}(v_\eps(t,y))\big)^2 \psi(t,y)\varrho_{\delta}(x-y)\,dx\,dy\,dt \Big]\notag 
\\\le& 
\E \Big[\int_{\Pi_T}\int_{\R^d} \Big(\sigma(u_\theta(t,x))-\widetilde{\sigma}(u_\theta(t,x))\Big)^2 \beta_\xi^{\prime\prime} \big(u_\theta(t,x)-v_\eps(t,y)\big) \psi(t,y)\varrho_{\delta}(x-y)\,dx\,dy\,dt \Big] \notag \\
& + \E \Big[\int_{\Pi_T}\int_{\R^d} \Big(\widetilde{\sigma}(u_\theta(t,x))-\widetilde{\sigma}(v_\eps(t,y))\Big)^2 \beta_\xi^{\prime\prime} \big(u_\theta(t,x)-v_\eps(t,y)\big) \psi(t,y)\varrho_{\delta}(x-y)\,dx\,dy\,dt \Big]
\\ \le& 
\big( \mathcal{E}(\sigma, \widetilde{\sigma}) \big)^2\, \E \Big[\int_{\Pi_T}\int_{\R^d} |u_\theta(t,x)|^2 \beta_\xi^{\prime\prime} \big(u_\theta(t,x)-v_\eps(t,y)\big) \psi(t,y)\varrho_{\delta}(x-y)\,dx\,dy\,dt \Big] \notag \\
& +2 \E \Big[\int_{\Pi_T}\int_{\R^d} \beta_\xi \big(u_\theta(t,x)-v_\eps(t,y)\big) \psi(t,y)\varrho_{\delta}(x-y)\,dx\,dy\,dt \Big],
\end{align*}
and, in view of the above inequality and $\beta_{\xi}^{\prime\prime}(r)\le \frac{C}{\xi}$, we obtain
\begin{lem}\label{estim_A6}
\begin{align}
\mathcal{A}_6 & \le C \frac{\big( \mathcal{E}(\sigma, \widetilde{\sigma}) \big)^2}{\xi}  \int_{s=0}^T ||\psi(s,\cdot)||_{L^\infty(\R^d)}\,ds 
+2 \E \Big[\int_{\Pi_T}\int_{\R^d} \beta_\xi \big(u_\theta(t,x)-v_\eps(t,y)\big) \psi(t,y)\varrho_{\delta}(x-y)\,dx\,dy\,dt \Big].
\label{esti:a6-final}
\end{align}
\end{lem}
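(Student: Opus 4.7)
The plan is to read off the bound directly from the displayed inequality that immediately precedes the lemma statement. That inequality is obtained by the standard splitting
\[
\sigma(u_\theta)-\widetilde{\sigma}(v_\eps) = \big[\sigma(u_\theta)-\widetilde{\sigma}(u_\theta)\big] + \big[\widetilde{\sigma}(u_\theta)-\widetilde{\sigma}(v_\eps)\big],
\]
the elementary inequality $(a+b)^2 \leq 2a^2 + 2b^2$, the definition of $\mathcal{E}(\sigma,\widetilde{\sigma})$, the Lipschitz hypothesis \ref{A4} on $\widetilde{\sigma}$, and the pointwise bound $r^2\beta_\xi''(r)\leq 2\beta_\xi(r)$ from Remark \ref{beta} (which converts the contribution from the Lipschitz term into a multiple of $\beta_\xi$). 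After these manipulations only the term containing $|u_\theta|^2 \beta_\xi''$ still needs to be handled; the other term already has the exact shape displayed on the right-hand side of the lemma.

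To deal with the remaining term I will use the universal estimate $\beta_\xi''(r)\leq M_2/\xi$, which is immediate from the scaling $\beta_\xi(r)=\xi\beta(r/\xi)$ and the boundedness of $\beta''$ recorded in \eqref{eq:approx to abosx}. Pulling the factor $1/\xi$ outside the expectation, applying Fubini (legitimate because all integrands are nonnegative) to integrate out $y$ via $\int_{\R^d}\varrho_\delta(x-y)\,dy = 1$, and bounding $\psi(t,y)\leq \|\psi(t,\cdot)\|_{L^\infty(\R^d)}$, the remaining term reduces to
\[
\frac{C\big(\mathcal{E}(\sigma,\widetilde{\sigma})\big)^2}{\xi}\int_0^T \|\psi(t,\cdot)\|_{L^\infty(\R^d)}\,\E\big[\|u_\theta(t,\cdot)\|_{L^2(\R^d)}^2\big]\,dt.
\]
The uniform moment bound $\sup_{t\in[0,T]} \E[\|u_\theta(t)\|_2^2] \leq C$ supplied by Proposition \ref{prop:vanishing viscosity-solution} then produces the first term of the stated inequality.

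I do not foresee a substantial obstacle here; the only point requiring care is that the constant on the right-hand side must be independent of the viscosity parameter $\theta$, and this is exactly guaranteed by the uniformity in $\theta$ built into the a priori estimate \eqref{bounds:a-priori-viscous-solution}. All remaining manipulations (Fubini, factoring out $1/\xi$, and the pointwise control of $\beta_\xi''$) are routine since the integrands are nonnegative and the test function $\psi$ is compactly supported.
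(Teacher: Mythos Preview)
Your proposal is correct and follows essentially the same route as the paper: the same splitting $\sigma(u_\theta)-\widetilde{\sigma}(v_\eps)=[\sigma(u_\theta)-\widetilde{\sigma}(u_\theta)]+[\widetilde{\sigma}(u_\theta)-\widetilde{\sigma}(v_\eps)]$, the same use of the definition of $\mathcal{E}(\sigma,\widetilde{\sigma})$ and the Lipschitz bound on $\widetilde{\sigma}$, the conversion of the Lipschitz term via $r^2\beta_\xi''(r)\le 2\beta_\xi(r)$ from Remark~\ref{beta}, and then the pointwise bound $\beta_\xi''\le C/\xi$ together with the uniform moment estimate \eqref{bounds:a-priori-viscous-solution} to control the remaining term. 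Your remark that the constant must be independent of $\theta$, and that this is precisely what \eqref{bounds:a-priori-viscous-solution} supplies, is the only subtle point and you have identified it correctly.
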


\noindent  Next we consider the term $\mathcal{A}_8$. Since $\beta_\xi(r)\le |r|$, we obtain 
\begin{align}
\mathcal{A}_8 &= \E \Big[\int_{\R^d \times \R^d} \beta_\xi(u_0(x)-v_0(y))\psi(0,y)\varrho_{\delta} (x-y)\,dx\,dy\Big]\notag
\\&\le  
\E \Big[\int_{\R^d \times \R^d} \big|u_0(x)-v_0(y)\big|\psi(0,y)\varrho_{\delta} (x-y)\,dx\,dy\Big].\label{esti:a8-final}
\end{align}

\noindent Let us focus on the term $\mathcal{A}_9$. For this, let us define
\begin{align*}
 a:= u_\theta(t,x)-v_\eps(t,y),\,\, \text{and}\,\,\, b:= \eta(u_\theta(t,x);z)-\widetilde{\eta}(v_\eps(t,y);z).
\end{align*}
We can now rewrite $\mathcal{A}_9$ in the following simplified form 
\begin{align}
 \mathcal{A}_9 & = \E\Big[\int_{\Pi_T}\int_{\R^d} \int_{|z|>0} \int_0^1 (1-\lambda) b^2 \beta_\xi^{\prime\prime}
 \big(a + \lambda b\big) \psi(t,y)\varrho_{\delta}(x-y) \,d\lambda\,m(dz)\,dx\,dy\,dt \Big] \notag \\
  & \le C  \E\Big[\int_{\Pi_T}\int_{\R^d} \int_{|z|>0} \int_0^1 (1-\lambda) \big| \eta(u_\theta(t,x);z)-\widetilde{\eta}(u_\theta(t,x);z)\big|^2
 \beta_\xi^{\prime\prime}\big(a + \lambda b\big) \psi(t,y) \notag \\
 & \hspace{6cm} \times \varrho_{\delta}(x-y) \,d\lambda\,m(dz)\,dx\,dy\,dt \Big] \notag \\
 &  + C \E\Big[\int_{\Pi_T}\int_{\R^d} \int_{|z|>0} \int_0^1 (1-\lambda) \big| \widetilde{\eta}(u_\theta(t,x);z)-\widetilde{\eta}(v_\eps(t,y);z)\big|^2
 \beta_\xi^{\prime\prime}\big(a + \lambda b\big) \psi(t,y) \notag \\
 & \hspace{7cm} \times \varrho_{\delta}(x-y) \,d\lambda\,m(dz)\,dx\,dy\,dt \Big] \notag \\
 & := \mathcal{A}_{9,1} + \mathcal{A}_{9,2}.\label{esti:a90}
\end{align}
Note that $\beta_{\xi}^{\prime\prime}(r)\le \frac{C}{\xi}$, for any $r\in \R$. 
Thus, in view of the definition of
\begin{align*}
\mathcal{D}(\eta,\widetilde{\eta})= \sup_{u\neq 0} \int_{|z|>0} \frac{\big|\eta(u,z)-\widetilde{\eta}(u,z)\big|^2}{|u|^2} \,m(dz),
\end{align*}
and the uniform moment estimate \eqref{bounds:a-priori-viscous-solution}, we see that 
\begin{align}
 \mathcal{A}_{9,1}  &  \le \frac{C}{\xi} \mathcal{D}(\eta,\widetilde{\eta}) \E\Big[\int_{\Pi_T}\int_{\R^d}  |u_\theta(t,x)|^2 
 \psi(t,y)\varrho_{\delta}(x-y)\,dx\,dy\,dt \Big]  \notag \\
 & \le  \frac{C}{\xi} \mathcal{D}(\eta,\widetilde{\eta}) \int_{0}^T ||\psi(s,\cdot)||_{L^{\infty}(\R^d)}\,ds.\label{esti:a91}
\end{align}
Next we move on to estimate the term $\mathcal{A}_{9,2}$. Notice that, thanks to the assumption \ref{A5}, 
\begin{align}
  \big| \widetilde{\eta}(u_\theta(t,x);z)-\widetilde{\eta}(v_\eps(t,y);z)\big|^2 \beta_\xi^{\prime\prime}\big(a + \lambda b\big) & \le 
   \big|u_\theta(t,x)-v_\eps(t,y)\big|^2  \beta_\xi^{\prime\prime}\big(a + \lambda b\big) \big(1 \wedge |z|^2\big) \notag  \\
   & \le a^2 \beta_\xi^{\prime\prime}\big(a + \lambda b\big) \big(1 \wedge |z|^2\big).\label{esti:a92-1}
\end{align}
Therefore, we need to find a suitable upper bound on $a^2 \beta_\xi^{\prime\prime}\big(a + \lambda b\big)$. Here we follow the similar argument 
as we have done in Section \ref{sec:apriori+existence} (estimation of the term $\mathcal{G}$). Since $\beta^{\prime\prime}$
is nonnegative and symmetric around zero, we can assume without loss of generality that $a \ge 0$. Thus, by assumption \ref{A5}, we have
\begin{align*}
 |b| & \le \big|\eta(u_\theta(t,x);z)-\widetilde{\eta}(u_\theta(t,x);z)\big| + \big|\widetilde{\eta}(u_\theta(t,x);z)-\widetilde{\eta}(v_\eps(t,y);z)\big| \notag \\
 &  \le \big|\eta(u_\theta(t,x);z)-\widetilde{\eta}(u_\theta(t,x);z)\big| + \lambda^* \big|u_\theta(t,x)-v_\eps(t,y)\big| \notag \\
 & = \big|\eta(u_\theta(t,x);z)-\widetilde{\eta}(u_\theta(t,x);z)\big| + \lambda^* a,
\end{align*}
and hence 
\begin{align*}
 -\lambda^* a - \big|\eta(u_\theta(t,x);z)-\widetilde{\eta}(u_\theta(t,x);z)\big|  \le -|b|. 
\end{align*}
Thus, for $\lambda \in [0,1]$, we see that 
  $(1-\lambda^*) a - \big|\eta(u_\theta(t,x);z)-\widetilde{\eta}(u_\theta(t,x);z)\big|  \le a+ \lambda b$, which gives 
  \begin{align}
   0\le a \le (1-\lambda^*)^{-1} \Big\{ (a+ \lambda b) +  \big|\eta(u_\theta(t,x);z)-\widetilde{\eta}(u_\theta(t,x);z)\big|  \Big\}. \label{esti:a92-2}
  \end{align}
Making use of \eqref{esti:a92-2} in \eqref{esti:a92-1} along with Remark \ref{beta} and the fact that  $\beta_{\xi}^{\prime\prime}(r)\le \frac{C}{\xi}$,  we get 
\begin{align*}
a^2 \beta_\xi^{\prime\prime}\big(a + \lambda b\big) \big(1 \wedge |z|^2\big) 
\leq &
\frac{2\big(1 \wedge |z|^2\big)}{(1-\lambda^*)^{2}}\Big\{ (a+ \lambda b)^2 +  \big|\eta(u_\theta(t,x);z)-\widetilde{\eta}(u_\theta(t,x);z)\big|^2  \Big\} \beta_\xi^{\prime\prime}\big(a + \lambda b\big) 
\\ \leq &
\frac{2\big(1 \wedge |z|^2\big)}{(1-\lambda^*)^{2}}\Big\{ 2\beta_\xi\big(a + \lambda b\big)  +  \frac{C}{\xi}\big|\eta(u_\theta(t,x);z)-\widetilde{\eta}(u_\theta(t,x);z)\big|^2  \Big\}. 
\end{align*}
Let us remark now that 
\begin{align*}
\beta_\xi\big(a + \lambda b\big) =& \beta_\xi\big(|a + \lambda b|\big) \leq \beta_\xi\big(2|a| + |\eta(u_\theta(t,x);z)-\widetilde{\eta}(u_\theta(t,x);z)|\big)
\\ \leq &
2\beta_\xi\big(a\big) + |\eta(u_\theta(t,x);z)-\widetilde{\eta}(u_\theta(t,x);z)|,
\end{align*}
to get:
 \begin{align*}
  \mathcal{A}_{9,2} 
   \le &
  C \E\Big[\int_{\Pi_T}\int_{\R^d} \int_{|z|>0}  
   \Bigg\{ \beta_\xi\big(u_\theta(t,x)-v_\eps(t,y)\big) + \big|\eta(u_\theta(t,x);z)-\widetilde{\eta}(u_\theta(t,x);z)\big| \notag \\
  & \hspace{2,5cm} + \frac{\big|\eta(u_\theta(t,x);z)-\widetilde{\eta}(u_\theta(t,x);z)\big|^2}{\xi}\Bigg\}  
    \times \big(1 \wedge |z|^2\big)  \psi(t,y) \varrho_{\delta}(x-y)\,m(dz)\,dx\,dy\,dt \Big] \notag \\
\\ \le&
  C \E\Big[\int_{\Pi_T}\int_{\R^d} 
 \beta_\xi\big(u_\theta(t,x)-v_\eps(t,y)\big)
   \psi(t,y) \varrho_{\delta}(x-y)\,dx\,dy\,dt \Big] \notag 
   \\
  &+ C \E\Big[\int_{\Pi_T}\int_{\R^d} 
   \Bigg\{  \int_{|z|>0}  \frac{\big|\eta(u_\theta(t,x);z)-\widetilde{\eta}(u_\theta(t,x);z)\big|}{|u_\theta(t,x)|}
   \big(1 \wedge |z|\big) \,m(dz)\Bigg\}  \notag \\
   & \hspace{8,5cm} \times |u_\theta(t,x)|  \psi(t,y) \varrho_{\delta}(x-y)\,dx\,dy\,dt \Big] \notag \\
   \\
  &+ \frac{C}{\xi} \E\Big[\int_{\Pi_T}\int_{\R^d} \mathcal{D}(\eta,\widetilde{\eta}) |u_\theta(t,x)|^2
  \psi(t,y) \varrho_{\delta}(x-y)\,dx\,dy\,dt \Big] \notag
\\  \le&
  C \E\Big[\int_{\Pi_T}\int_{\R^d} 
 \beta_\xi\big(u_\theta(t,x)-v_\eps(t,y)\big)
   \psi(t,y) \varrho_{\delta}(x-y)\,dx\,dy\,dt \Big] \notag 
   \\
  &+ C\sqrt{\mathcal{D}(\eta, \widetilde{\eta})} \,\E\Big[\int_{\Pi_T}\int_{\R^d}    |u_\theta(t,x)|  \psi(t,y) \varrho_{\delta}(x-y)\,dx\,dy\,dt \Big] 
  + \frac{C}{\xi} \mathcal{D}(\eta,\widetilde{\eta})  \int_0^T
\|\psi(t,\cdot)\|_{L^\infty(\R^d)}\,dt.
   \end{align*}
 Thus, 
 \begin{align}
\mathcal{A}_{9,2} 
  \le& 
C \E\Big[\int_{\Pi_T}\int_{\R^d} 
 \beta_\xi\big(u_\theta(t,x)-v_\eps(t,y)\big)
   \psi(t,y) \varrho_{\delta}(x-y)\,dx\,dy\,dt \Big] \notag
   \\&
   + C\Big(\sqrt{\mathcal{D}(\eta, \widetilde{\eta})} 
  + \frac{\mathcal{D}(\eta,\widetilde{\eta}) }{\xi}\Big)  \int_0^T
\|\psi(t,\cdot)\|_{L^\infty(\R^d)}\,dt.
\label{esti:a92}
 \end{align}
 Thus, combining \eqref{esti:a91} and \eqref{esti:a92} in \eqref{esti:a90}, we obtain the following bound:
 \begin{align}
  \mathcal{A}_9 
  \le& 
C \E\Big[\int_{\Pi_T}\int_{\R^d} 
 \beta_\xi\big(u_\theta(t,x)-v_\eps(t,y)\big)
   \psi(t,y) \varrho_{\delta}(x-y)\,dx\,dy\,dt \Big] \notag
   \\&
   + C\Big(\sqrt{\mathcal{D}(\eta, \widetilde{\eta})} 
  + \frac{\mathcal{D}(\eta,\widetilde{\eta}) }{\xi}\Big)  \int_0^T
\|\psi(t,\cdot)\|_{L^\infty(\R^d)}\,dt.
\label{esti:a9-final}
 \end{align}
Finally, invoking the estimates \eqref{esti:a1+a2+a3-final}, \eqref{esti:a4-final}, \eqref{esti:a5-final}, \eqref{esti:a6-final}, \eqref{esti:a8-final} and 
\eqref{esti:a9-final} in \eqref{stoc_entropy_3} we have 
\begin{align}
 0 \le &  \E \Big[\int_{\R^d} \int_{\R^d} \big|u_0(x)-v_0(y)\big|\psi(0,y)\varrho_{\delta} (x-y)\,dx\,dy\Big] \label{esti:final-01}  
 \\& + 
 \E \Big[\int_{\Pi_T}\int_{\R^d} \beta_\xi \big(u_\theta(t,x)-v_\eps(t,y)\big) \partial_t\psi(t,y)\varrho_\delta(x-y)\,dy\,dx\,dt\Big]\notag 
 \\& + 
 C\E \Big[\int_{\Pi_T}\int_{\R^d} \beta_\xi \big(u_\theta(t,x)-v_\eps(t,y)\big) \psi(t,y)\varrho_\delta(x-y)\,dy\,dx\,dt\Big]\notag 
 \\&
 + C \Bigg(\frac{\mathcal{E}(\sigma, \widetilde{\sigma})^2}{\xi} + \xi + ||f^\prime-g^\prime||_{\infty} + \sqrt{\mathcal{D}(\eta, \widetilde{\eta})}+ \frac{\mathcal{D}(\eta, \widetilde{\eta})}{\xi} \Bigg) \int_{0}^T ||\psi(t,\cdot)||_{L^\infty(\R^d)}\,dt  \notag \\
 &+  C\frac{(\|A'-B'\|_\infty+\xi)}{\delta} \int_{0}^T ||\psi(t,\cdot)||_{L^\infty(\R^d)}\,dt 
 + \frac{C}{\delta} \big( \eps +\theta\big) \notag \\
&- \E \Big[\int_{\Pi_T}\int_{\R^d} f^{\beta_\xi} \big(u_\theta(t,x),v_\eps(t,y)\big) \cdot \grad_y \psi(t,y)\varrho_{\delta}(x-y)\,dx\,dy\,dt \Big]   \notag
\\ &-\E\Big[\int_{\Pi_T}\int_{\R^d} \beta^\prime_\xi(u_\theta(t,x)-v_\eps(t,y))[\nabla_x A(u_\theta(t,x)) -  \nabla_y B(v_\eps(t,y))]  \varrho_{\delta}(x-y) \nabla_y\psi(t,y)\,dx\,dy\,dt \Big].\notag
\end{align}
Note that $\{u_\theta\}_{\theta>0}$ converges  in $L^p_{loc}(\R^d, L^p((0,T)\times\Omega))$ for any $p\in[1,2)$ to the unique BV entropy solution $u$, $\{v_\eps\}_{\eps>0}$ converges in the same way to the unique BV entropy solution $v$ of \eqref{eq:stoc_con_brown-1} with initial data $v_0$ and $A(u_\theta)$ and $B(v_\eps)$ converge weakly in $L^2(\Omega\times(0,T),H^1(\R^d))$. Thus, by
passing to the limit as $\eps, \theta \goto 0$ in \eqref{esti:final-01}, we obtain 
\begin{align}
 0 & \le   \E \Big[\int_{\R^d} \int_{\R^d} \big|u_0(x)-v_0(y)\big|\psi(0,y)\varrho_{\delta} (x-y)\,dx\,dy\Big]  \label{stoc_entropy_4}  
 \\& + 
 \E \Big[\int_{\Pi_T}\int_{\R^d} \beta_\xi \big(u(t,x)-v(t,y)\big) \partial_t\psi(t,y)\varrho_\delta(x-y)\,dy\,dx\,dt\Big]\notag 
 \\& + 
 C\E \Big[\int_{\Pi_T}\int_{\R^d} \beta_\xi \big(u(t,x)-v(t,y)\big) \psi(t,y)\varrho_\delta(x-y)\,dy\,dx\,dt\Big]\notag 
 \\&
 + C \Bigg(\frac{\mathcal{E}(\sigma, \widetilde{\sigma})^2}{\xi} + \xi + ||f^\prime-g^\prime||_{\infty} + \sqrt{\mathcal{D}(\eta, \widetilde{\eta})}+ \frac{\mathcal{D}(\eta, \widetilde{\eta})}{\xi} + \frac{(\|A'-B'\|_\infty+\xi)}{\delta} \Bigg) \int_{0}^T ||\psi(t,\cdot)||_{L^\infty(\R^d)}\,dt  \notag
\\&
-  \E\Big[\int_{\Pi_T}\int_{\R^d} f^{\beta_\xi} \big(u(t,x),v(t,y)\big) \cdot \grad_y \psi(t,y)\varrho_{\delta}(x-y)\,dx\,dy\,dt \Big]   \notag
\\ &-\E\Big[\int_{\Pi_T}\int_{\R^d} \beta^\prime_\xi(u(t,x)-v(t,y))[\nabla_x A(u(t,x)) - \nabla_y B(v(t,y))]  \varrho_{\delta}(x-y) \nabla_y\psi(t,y)\,dx\,dy\,dt \Big]
.\notag
\end{align}
To proceed further, we make a special choice for the function $\psi(t,x)$. To this end, for each $h>0$ and fixed $t\ge 0$, we define
 \begin{align}
 \psi_h^t(s)=\begin{cases} 1, &\quad \text{if}~ s\le t, \notag \\
 1-\frac{s-t}{h}, &\quad \text{if}~~t\le s\le t+h,\notag \\
 0, & \quad \text{if} ~ s \ge t+h.
 \end{cases}
 \end{align}
Furthermore, let $\zeta \in C_c^2(\R^d)$ be any nonnegative test function. 
%
\noindent Clearly, \eqref{stoc_entropy_4} holds with
 $\psi(s,x)=\psi_h^t(s)\zeta(x)$. Let $\mathbb{T}$ be the set all points $t$ in $[0, \infty)$ such that $t$ is a right
 Lebesgue point of 
 \begin{align*}
\mathcal{B}(t)= \E \Big[\int_{\R^d}\int_{\R^d} \beta_\xi\big(u(t,x)-v(t,y)\big)\zeta(y)
\varrho_\delta(x-y)\,dx\,dy\Big].
\end{align*}

\noindent Clearly, $\mathbb{T}^{\complement}$ has zero Lebesgue measure. Fix  $t\in \mathbb{T}$. Thus, we have, from \eqref{stoc_entropy_4}
\begin{align*}
& \frac{1}{h}\int_{t}^{t+h}  \E \Big[\int_{\R^{2d}} \beta_\xi\big( u(s,x)-v(s,y)\big)\zeta(y) \varrho_\delta(x-y)\,dx\,dy \Big]\,ds \notag
\\ &\le   \E \Big[\int_{\R^{2d}}  \big|u_0(x)-v_0(y)\big|\zeta(y)\varrho_{\delta} (x-y)\,dx\,dy\Big]  
 \\&
+ C \E\Big[\int_0^{t+h}\int_{\R^{2d}} \beta_\xi\big( u(s,x)-v(s,y)\big) \zeta(y) \psi_h^t(s) \varrho_{\delta}(x-y)\,dx\,dy\,ds \Big]   \notag
\\&
 + C \Bigg(\frac{\mathcal{E}(\sigma, \widetilde{\sigma})^2}{\xi} + \xi + ||f^\prime-g^\prime||_{\infty} 
 + \sqrt{\mathcal{D}(\eta, \widetilde{\eta})}+ \frac{\mathcal{D}(\eta, \widetilde{\eta})}{\xi} 
 + \frac{(\|A'-B'\|_\infty+\xi)}{\delta}\Bigg) ||\zeta(\cdot)||_{L^\infty} \int_{s=0}^{t+h} \psi_h^t(s)  \,ds \notag
\\&
-  \E\Big[\int_0^{t+h}\int_{\R^{2d}} f^{\beta_\xi} \big(u(s,x),v(s,y)\big) \cdot \grad \zeta(y) \psi_h^t(s) \varrho_{\delta}(x-y)\,dx\,dy\,ds \Big]   \notag
\\ &-\E\Big[\int_0^{t+h}\int_{\R^{2d}} \beta^\prime_\xi(u(s,x)-v(s,y))[\nabla_x A(u(s,x)) - \nabla_y B(v(s,y))]  \varrho_{\delta}(x-y) \nabla\zeta(y)\psi_h^t(s)\,dx\,dy\,ds \Big].\notag
\end{align*}
Passing to the limit as $h\goto 0$, we obtain
 \begin{align*}
&\E \Big[\int_{\R^{2d}} \beta_\xi\big( u(t,x)-v(t,y)\big)\zeta(y) \varrho_\delta(x-y)\,dx\,dy \Big] \notag
\\ \le &  \E \Big[\int_{\R^{2d}}  \big|u_0(x)-v_0(y)\big|\zeta(y)\varrho_{\delta} (x-y)\,dx\,dy\Big]  
\\&
+ C \E\Big[\int_0^{t}\int_{\R^{2d}}  \beta_\xi\big( u(t,x)-v(t,y)\big) \zeta(y)  \varrho_{\delta}(x-y)\,dx\,dy\,ds \Big]   \notag
 \\&
 + C \Bigg(\frac{\mathcal{E}(\sigma, \widetilde{\sigma})^2}{\xi} + \xi + ||f^\prime-g^\prime||_{\infty} + \sqrt{\mathcal{D}(\eta, \widetilde{\eta})} + \frac{\mathcal{D}(\eta, \widetilde{\eta})}{\xi} + \frac{(\|A'-B'\|_\infty+\xi)}{\delta} \Bigg) ||\zeta(\cdot)||_{L^\infty(\R^d)} t \notag
\\&
-  \E\Big[\int_0^{t}\int_{\R^{2d}} f^{\beta_\xi} \big(u(s,x),v(s,y)\big) \cdot \grad \zeta(y)  \varrho_{\delta}(x-y)\,dx\,dy\,ds \Big]   \notag
\\ &-\E\Big[\int_0^{t}\int_{\R^{2d}} \beta^\prime_\xi(u(s,x)-v(s,y))[\nabla_x A(u(s,x)) - \nabla_y B(v(s,y))]  \varrho_{\delta}(x-y) \nabla\zeta(y)\,dx\,dy\,ds \Big].\notag
\end{align*}
By then sending  $\zeta$  to ${\bf 1}_{\R^d}$ (thanks to Remark \ref{estimGradAu} for the last term), we have
\begin{align*}
\E \Big[\int_{\R^{2d}} & \beta_\xi\big( u(t,x)-v(t,y)\big) \varrho_\delta(x-y)\,dx\,dy \Big] \notag
\\ \le &  \E \Big[\int_{\R^{2d}}  \big|u_0(x)-v_0(y)\big|\varrho_{\delta} (x-y)\,dx\,dy\Big]  
 \\&
 + C \Bigg(\frac{\mathcal{E}(\sigma, \widetilde{\sigma})^2}{\xi} + \xi + ||f^\prime-g^\prime||_{\infty} + \sqrt{\mathcal{D}(\eta, \widetilde{\eta})}+ \frac{\mathcal{D}(\eta, \widetilde{\eta})}{\xi} 
+  \frac{\|A'-B'\|_\infty+\xi}{\delta} \Bigg) t
\\&
+ C \E\Big[\int_0^{t}\int_{\R^{2d}}  \beta_\xi\big( u(t,x)-v(t,y)\big)  \varrho_{\delta}(x-y)\,dx\,dy\,ds \Big],   \notag 
\end{align*}
and
\begin{align*}
\E \Big[\int_{\R^{2d}} & \beta_\xi\big( u(t,x)-v(t,y)\big) \varrho_\delta(x-y)\,dx\,dy \Big] \notag
\\ \le &  e^{Ct}\E \Big[\int_{\R^{2d}}  \big|u_0(x)-v_0(y)\big|\varrho_{\delta} (x-y)\,dx\,dy\Big]  
 \\&
 + Ce^{Ct} \Bigg(\frac{\mathcal{E}(\sigma, \widetilde{\sigma})^2}{\xi} + \xi + ||f^\prime-g^\prime||_{\infty} + \sqrt{\mathcal{D}(\eta, \widetilde{\eta})}+ \frac{\mathcal{D}(\eta, \widetilde{\eta})}{\xi} 
+  \frac{\|A'-B'\|_\infty+\xi}{\delta} \Bigg) t
\end{align*}
by Gronwall argument.
Let us consider now a bounded by $1$ weight-function $\Phi\in L^1(\R^d)$, non negative (for example negative exponentials of $|x|$). Then, by using $|r|\le M_1 \xi + \beta_\xi(r)$, we have
\begin{align}
  \E \Big[\int_{\R^d}\int_{\R^d} & \big| u(t,x)-v(t,y)\big|\varrho_\delta(x-y)\Phi(x)\,dx\,dy \Big] - C\xi \|\Phi\|_{L^1(\R^d} 
 \label{inq:pre-final}
 \\
 \le& e^{Ct} \E\Big[\int_{\R^d}\int_{ \R^d}\big| u_0(x) -v_0(y)\big|\varrho_\delta(x-y) \,dx\, dy\Big] 
+  Ce^{Ct}\frac{(\|A'-B'\|_\infty+\xi)}{\delta} t \notag \\
& \qquad \quad  + Ce^{Ct} \Bigg(\frac{\mathcal{E}(\sigma, \widetilde{\sigma})^2}{\xi} + \xi + ||f^\prime-g^\prime||_{\infty} + \sqrt{\mathcal{D}(\eta, \widetilde{\eta})} + \frac{\mathcal{D}(\eta, \widetilde{\eta})}{\xi} \Bigg) t.\notag
 \end{align}
Again, in view of BV bound of the entropy solutions $u(t,x)$ and $v(t,y)$, we have 
 \begin{align}
\E \Big[\int_{\R^d} & \big|v(t,y)-u(t,y)\big|\,dy\Big]  \notag \\
  \le &   \E \Big[\int_{\R^d}\int_{\R^d}  \big| v(t,y)-u(t,x)\big|
\varrho_\delta(x-y)\,dx\,dy \Big]
 + \E \Big[\int_{\R^d}\int_{\R^d} \big| u(t,x) -u(t,y)\big|\varrho_\delta(x-y)\,dx\,dy \Big]\notag \\
\le & \E \Big[ \int_{\R^d}\int_{\R^d} \big|v(t,y) -u(t,x)\big|\varrho_\delta(x-y)\,dx\,dy \Big]
  + \delta\, \E\Big[|u_0|_{BV(\R^d)}\Big], \label{estimate:solu}
 \end{align} 
and 
 \begin{align}
   \E \Big[\int_{\R^d}\int_{ \R^d} \big| u_0(x) -v_0(y)\big|\varrho_\delta(x-y) \,dx\, dy\Big]
   \le  \E \Big[\int_{\R^d} \big|u_0(x)-v_0(x)\big| \,dx \Big]+ \delta\,  \E\Big[|v_0|_{BV(\R^d)}\Big]. \label{estimate:ini}
 \end{align}
Thus, thanks to \eqref{estimate:solu} and \eqref{estimate:ini}, we obtain from \eqref{inq:pre-final} 
\begin{align}
   \E \Big[\int_{\R^d}\big| u(t,x)-v(t,x)\big|\Phi(x)\,dx \Big]
  & \le e^{Ct} \E\Big[\int_{\R^d}\big| u_0(x) -v_0(x)\big| \,dx\Big] + C \big(\xi +\delta\big) +  Ce^{Ct}\frac{(\|A'-B'\|_\infty+\xi)}{\delta}t \notag \\
  & \quad \quad  + Ce^{Ct} \Bigg(\frac{\mathcal{E}(\sigma, \widetilde{\sigma})^2}{\xi} + \xi 
  + ||f^\prime-g^\prime||_{\infty} + \sqrt{\mathcal{D}(\eta, \widetilde{\eta})} 
  + \frac{\mathcal{D}(\eta, \widetilde{\eta})}{\xi} \Bigg) t. \label{esti:pre-final-1}
  \end{align}
By choosing $\xi= \max \bigg\{\mathcal{E}(\sigma, \widetilde{\sigma}), \sqrt{\mathcal{D}(\eta,\widetilde{\eta})}\bigg\}\sqrt{t}$ 
in \eqref{esti:pre-final-1}, we arrive at 
 \begin{align}
   & \E \Big[\int_{\R^d}\big| u(t,x)-v(t,x)\big|\Phi(x)\,dx \Big] \notag \\
  & \le e^{Ct} \,\E\Big[\int_{\R^d}\big| u_0(x) -v_0(x)\big| \,dx\Big] + Ce^{Ct} 
  \Bigg( \max \bigg\{ \mathcal{E}(\sigma, \widetilde{\sigma}), \sqrt{\mathcal{D}(\eta,\widetilde{\eta})}\bigg\} 
  \sqrt{t}(1+t) +\delta +  ||f^\prime-g^\prime||_{\infty} t \Bigg) \notag \\
 &  \quad  +  \frac{Ce^{Ct}}{\delta}\, \Bigg(\|A'-B'\|_\infty t+ \max \bigg\{ \mathcal{E}(\sigma, \widetilde{\sigma}), 
 \sqrt{\mathcal{D}(\eta,\widetilde{\eta})}\bigg\}t\sqrt{t} \Bigg)
 \\  & \le e^{Ct} \,\E\Big[\int_{\R^d}\big| u_0(x) -v_0(x)\big| \,dx\Big] + C_Te^{Ct}
 \Bigg( \max \bigg\{ \mathcal{E}(\sigma, \widetilde{\sigma}), \sqrt{\mathcal{D}(\eta,\widetilde{\eta})}\bigg\} \sqrt{t} +\delta +  ||f^\prime-g^\prime||_{\infty} t \Bigg) \notag \\
 &  \quad  +  \frac{C_Te^{Ct}}{\delta}\, \Bigg(\|A'-B'\|_\infty t+ \max \bigg\{ \mathcal{E}(\sigma, \widetilde{\sigma}), \sqrt{\mathcal{D}(\eta,\widetilde{\eta})}\bigg\}t \Bigg).
\label{esti:pre-final-2}
  \end{align}
Now we simply choose $\delta^2 = \max\bigg\{ \|A'-B'\|_\infty,  \mathcal{E}(\sigma, \widetilde{\sigma}), \sqrt{\mathcal{D}(\eta,\widetilde{\eta})}\bigg\}t$ in \eqref{esti:pre-final-2} and conclude that for a.e. $t>0$, 
\begin{align*}
     & \E \Big[\int_{\R^d}\big| u(t,x)-v(t,x)\big|\Phi(x)\,dx \Big] \notag \\
     & \quad \le C_Te^{Ct} \Bigg\{ \E\Big[\int_{\R^d}\big| u_0(x) -v_0(x)\big| \,dx\Big] 
     + \max \bigg\{ \mathcal{E}(\sigma, \widetilde{\sigma}),\sqrt{\mathcal{D}(\eta,\widetilde{\eta})}\bigg\} \sqrt{t} + ||f^\prime-g^\prime||_{\infty} t  \\
      & \hspace{3cm}+ \max\bigg\{ \sqrt{\|A'-B'\|_\infty}, \, \sqrt{\mathcal{E}(\sigma, \widetilde{\sigma})}, \sqrt[4]{\mathcal{D}(\eta,\widetilde{\eta})}  \bigg\}\sqrt{t} \Bigg\},
\end{align*}
 for some constant $C_T$ depending on $T$, $|u_0|_{BV(\R^d)}, |v_0|_{BV(\R^d)}, \|f^{\prime\prime}\|_{\infty}, \|f^\prime\|_{\infty}$, $\|\Phi\|_{L^1}$ and $\|B^\prime\|_{\infty}$.
 This completes the proof.

\section{Proof of the Main Corollary}
\label{sec:cor}
It is already known (cf. \cite{BaVaWit_2014,BisMajVal}) that the vanishing viscosity solutions $u_\eps(t,x)$ of the problem \eqref{eq:viscous-Brown} converge (in an appropriate sense) to the unique entropy solution $u(t,x)$ of the stochastic conservation law \eqref{eq:stoc_con_brown}. However, the nature of such convergence described by a rate of convergence is not available. As a by product of the Main Theorem, we explicitly obtain the rate of convergence of vanishing viscosity solutions to the unique
BV entropy solution of the underlying problem \eqref{eq:stoc_con_brown}. 

For $\eps>0$, let $u_\eps$ be a weak solution to the problem \eqref{eq:viscous} with data $(A,f,\sigma, \eta,u_0)$ and $u_\theta$ be a weak solution to the viscous problem 
\eqref{eq:viscous-Brown} with small positive parameter $\theta$ which is different from $\eps$.
A similar arguments as in the proof of the Main Theorem yields, thanks to \eqref{esti:final-01},  
\begin{align}
 0 \le &  \E \Big[\int_{\R^d} \int_{\R^d} \big|u_0(x)-u_0(y)\big|\psi(0,y)\varrho_{\delta} (x-y)\,dx\,dy\Big] \label{esti:final-02}  
 \\& + 
 \E \Big[\int_{\Pi_T}\int_{\R^d} \beta_\xi \big(u_\theta(t,x)-u_\eps(t,y)\big) 
[ \partial_t\psi(t,y) + C \psi(t,y)]
 \varrho_\delta(x-y)\,dy\,dx\,dt\Big]\notag 
 \\&
 + C  \xi  \int_{t=0}^T ||\psi(t,\cdot)||_{L^\infty(\R^d)}\,dt \notag
+  C\frac{\xi}{\delta} \int_{t=0}^T \|\psi(t,\cdot)\|_{L^\infty(\R^d)} \,dt+ \frac{C}{\delta} \big( \eps +\theta\big) \notag
\\&
- \E \Big[\int_{\Pi_T}\int_{\R^d} f^{\beta_\xi} \big(u_\theta(t,x),u_\eps(t,y)\big) \cdot \grad_y \psi(t,y)\varrho_{\delta}(x-y)\,dx\,dy\,dt \Big]   \notag
\\ &-\E \Big[\int_{\Pi_T}\int_{\R^d} \beta^\prime_\xi(u_\theta(t,x)-u_\eps(t,y))[\nabla_x A(u_\theta(t,x)) -\nabla_y A(u_\eps(t,y))]  \varrho_{\delta}(x-y) \nabla_y\psi(t,y)\,dx\,dy\,dt \Big].\notag
\end{align}
 Let the family  $\{u_\theta(s,x)\}_{\theta>0}$ converges to the unique entropy solution $u(s,x)$ as $\theta \goto 0$. Thus, passing to the limit as $\theta \goto 0$ in 
 \eqref{esti:final-02}, we have
 \begin{align}
 0 \le &  \E \Big[\int_{\R^d} \int_{\R^d} \big|u_0(x)-u_0(y)\big|\psi(0,y)\varrho_{\delta} (x-y)\,dx\,dy\Big] \label{esti:final-02bis}  
 \\& + 
 \E \Big[\int_{\Pi_T}\int_{\R^d} \beta_\xi \big(u(t,x)-u_\eps(t,y)\big)  
[ \partial_t\psi(t,y) + C \psi(t,y)]
\varrho_\delta(x-y)\,dy\,dx\,dt\Big]\notag 
 \\&
 + C  \xi  \int_{t=0}^T ||\psi(t,\cdot)||_{L^\infty(\R^d)}\,dt \notag
+  C\frac{\xi}{\delta} \int_{t=0}^T \|\psi(t,\cdot)\|_{L^\infty(\R^d)} \,dt + \frac{C\eps}{\delta}  \notag
\\&
- \E\Big[\int_{\Pi_T}\int_{\R^d} f^{\beta_\xi} \big(u(t,x),u_\eps(t,y)\big) \cdot \grad_y \psi(t,y)\varrho_{\delta}(x-y)\,dx\,dy\,dt \Big]   \notag
\\ &-\E\Big[\int_{\Pi_T}\int_{\R^d} \beta^\prime_\xi(u_\theta(t,x)-u_\eps(t,y))[\nabla_x A(u(t,x)) - \nabla_y A(u_\eps(t,y))]  \varrho_{\delta}(x-y) \nabla_y\psi(t,y)\,dx\,dy\,dt \Big].\notag
\end{align}
As before, we use $\psi(s,x)=\psi_h^t(s)\zeta(x)$ where $\psi_h^t(s)$ and $\zeta(x)$ are described previously and then pass to the limit as $h\goto 0$. Again, sending $\zeta \goto {\bf 1}_{\R^d}$, the resulting expression reads as
\begin{align*}
\E \Big[\int_{\R^{2d}} & \beta_\xi \big(u(t,x)-u_\eps(t,y)\big) \varrho_\delta(x-y)\,dy\,dx\Big]
 \\ \le &  
\int_0^t\E \Big[\int_{\R^{2d}} \beta_\xi \big(u(s,x)-u_\eps(s,y)\big) \varrho_\delta(x-y)\,dy\,dx\Big]\,ds \notag
\\&+ 
 \E \Big[\int_{\R^d} \int_{\R^d} \big|u_0(x)-u_0(y)\big|\varrho_{\delta} (x-y)\,dx\,dy\Big] 
+  C  \xi  t 
+  C\frac{\xi}{\delta}  + \frac{C\eps}{\delta} .\notag
\end{align*}
and
\begin{align}
\E \Big[\int_{\R^{2d}} & \beta_\xi \big(u(t,x)-u_\eps(t,y)\big) \varrho_\delta(x-y)\,dy\,dx\Big] \label{esti:final-02ter}  
 \\ \le &  
e^{Ct}\Big( 
 \E \Big[\int_{\R^d} \int_{\R^d} \big|u_0(x)-u_0(y)\big|\varrho_{\delta} (x-y)\,dx\,dy\Big] 
+  C  \xi  t 
+  C\frac{\xi}{\delta}  + \frac{C\eps}{\delta} \Big).\notag
\end{align}
And finally, passing to the limit with respect to $\xi$ yields 
  \begin{align}
 \E \Big[\int_{\R^{2d}}  \big|u(t,x)-u_\eps(t,y)\big| \varrho_\delta(x-y)\,dy\,dx\Big]  
\le 
e^{Ct}\Big( \E \Big[\int_{\R^d} \int_{\R^d} \big|u_0(x)-u_0(y)\big|\varrho_{\delta} (x-y)\,dx\,dy\Big] 
  + \frac{C\eps}{\delta}\Big).
  \label{esti:final-02qua} 
\end{align}
Again, since $u_\eps(t,y)$ and $u(t,x)$ satisfy spatial BV bound, bounded by the BV norm of $u_0(\cdot)$, we obtain 
  \begin{align}
  & \E \Big[\int_{\R^d}\big| u_\eps(t,x)-u(t,x)\big|\,dx\Big]\le Ce^{Ct}(\delta +  \frac{\eps}{\delta}) .\label{esti:final}
  \end{align}
Finally, choosing the optimal value of $\delta$ in \eqref{esti:final} yields:  for a.e. $t>0$, 
  \begin{align*}
  \E\Big[\|u_\eps(t,\cdot)-u(t,\cdot)\|_{L^1(\R^d)}\Big] \le C e^{Ct}\eps^\frac{1}{2},
  \end{align*}
where $C>0$ is a constant depending only on  
$|u_0|_{BV(\R^d)}, \|f^{\prime\prime}\|_{\infty}, \|f^\prime\|_{\infty}$, and $\|A^\prime\|_{\infty}$. 
This completes the proof.
  

\section{Fractional BV Estimates}
\label{sec:frac}
In this section, we consider a more general class of stochastic balance laws driven by L\'{evy} noise of the type
 \begin{equation}
 \label{eq:levy_stochconservation_laws_spatial}
 \begin{cases} 
 du(t,x)- \mbox{div}_x f(u(t,x)) \,dt-\Delta_x A(u(t,x))\,dt  \\
\qquad \qquad \qquad \qquad \qquad \qquad \qquad  =\sigma(x,u(t,x))\,dW(t) + \int_{|z|> 0} \eta( x,u(t,x);z) \, \widetilde{N}(dz,dt), & x \in \Pi_T, \\
 u(0,x) = u_0(x), & x\in \R^d,
\end{cases}
\end{equation}
Observe that, noise coefficients $\sigma(x,u)$ and $\eta(x,u;z)$ depend explicitly on the spatial position $x$. Moreover, we assume that $\sigma(x,u)$, and 
$\eta(x,u;z)$ satisfy the following assumptions:
\begin{Assumptions2} 
\item \label{B31} There exists a positive constant $K_1 > 0$  such that 
\begin{align*} 
\big| \sigma(x,u)-\sigma(y,v)\big|  \leq K_1 \Big(|x-y| + |u-v|\Big), ~\text{for all} \,\,u,v \in \R; ~~ x,y \in \R^d.
\end{align*}  
Moreover, we assume that $\sigma(x,0)=0$, for all $x \in \R^d$. As a consequence,
$|\sigma(x,u)| \le K_1 |u|$.
 \item \label{B3}  There exist positive constants $K_2>0$  and  $\lambda^* \in (0,1)$  such that 
 \begin{align*}
 | \eta(x,u;z)-\eta(y,v;z)|  \leq  (\lambda^* |u-v| + K_2|x-y|)( |z|\wedge 1), ~\text{for all}~  u,v \in \R;~~z\in \R 
 ;~~ x,y \in \R^d.
 \end{align*}
Moreover, we assume that $\eta(x,0;z)=0$, for all $x \in \R^d$, and $z\in \R$. In particular, this implies that
\begin{align*}
|\eta(x,u;z)|  \leq  \lambda^* |u|( |z|\wedge 1),\quad \mbox{and}
\quad |u| \leq \frac{1}{1-\alpha\lambda^*} |u + \alpha \eta(x,u;z) |,\ \forall \alpha\in[0,1].
\end{align*}
\item \label{B4} There exists a non-negative function  $ g(x)\in L^\infty(\R^d)\cap L^2(\R^d)$  such that 
\begin{align*}
|\eta(x,u;z)| \le g(x)(1+|u|)(|z|\wedge 1), ~\text{for all}~ (x,u,z)\in \R^d \times  \R\times \R.  
\end{align*} 
\item \label{B5} $ A:\R\rightarrow \R$ is a non-decreasing  Lipschitz continuous function with $A(0)=0$. 
Moreover, $t\longmapsto \sqrt{A^\prime(t)}$ has a modulus of continuity $\omega_A$
such that $\frac{\omega_A(r)}{r^{\frac{2}{3}}} \longrightarrow 0$ as $r \goto 0$.
\end{Assumptions2}
Clearly, our continuous dependence estimate is not applicable for problems of type  \eqref{eq:levy_stochconservation_laws_spatial} due to the non-availability of BV estimate for the solution of \eqref{eq:levy_stochconservation_laws_spatial}. We refer to \cite[Section $2$]{Chen-karlsen_2012} for a discussion on this point in case of diffusion driven balance laws. However, it is possible to obtain a fractional BV estimate. 
To that context, drawing primary motivation from the discussions in \cite{Chen-karlsen_2012}, we intend to show that a uniform fractional BV estimate can be obtained for the solution of the regularized stochastic parabolic problem given by
\begin{align} 
\label{eq:stability-3}
&du_{\eps}(t,x)- \mbox{div}_x f(u_{\eps}(t,x)) \,dt-\Delta_x A(u_{\eps}(t,x))\,dt  \\
&\qquad \qquad \qquad \qquad \qquad =\sigma(x,u_{\eps}(t,x))\,dW(t) +  
  \int_{|z|> 0} \eta(x,u_\eps(t,x);z)\widetilde{N}(dz, dt)+ \eps \Delta_{xx} u_\eps(t,x) \,dt \notag
\end{align} 
Regarding equation \eqref{eq:stability-3}, we mention that existence and regularity 
of the solution to the problem \eqref{eq:stability-3} has been studied in \cite{BaVaWit_2014,BisMajVal}.
We start with a deterministic lemma, related to the estimation of the modulus of continuity
of a given integrable function, and also an useful link between Sobolev and Besov spaces. In fact, we have the following lemma, a proof of which can be found in \cite[Lemma $2$]{Chen-karlsen_2012}.
  \begin{lem}\label{lem: deterministic-modulus-continuity}
    Let $h:\R^d\goto \R$ be a given integrable function, $ 0\le \zeta \in C_c^\infty(\R^d)$ and \{$J_\delta\}_{\delta>0}$ be
     a sequence of symmetric mollifiers, i.e., $J_\delta(x)=\frac{1}{\delta^d} J(\frac{|x|}{\delta}),\, 0\le J \in C_c^\infty(\R)$,
    $\mbox{supp}(J) \subset [-1,1],\, J(-\cdot)= J(\cdot)$ and $\int J=1$. Then
    \begin{itemize}
     \item [(a)] For $r,s \in (0,1)$ with $r<s$, there exists a finite
    constant $C_1=C_1(J,d,r,s)$ such that
    \begin{align}
    \int_{\R^d}\int_{\R^d} | h(x+z)-h(x-z)| & J_\delta(z)\zeta(x)\,dx\,dz \notag \\
    \le & C_1\,\delta^r \sup_{|z|\le \delta} |z|^{-s}\int_{\R^d} |h(x+z)-h(x-z)|\zeta(x)\,dx.\label{lem:modulus-continuity-part-1}
    \end{align}
    \item[(b)] For $r,s \in (0,1)$ with $r<s$, there exists a finite
    constant $C_2=C_2(J,d,r,s)$ such that
    \begin{align}
   \sup_{|z|\le \delta}\int_{\R^d} & |h(x+z)-h(x)|\zeta(x)\,dx \notag \\
     & \quad \le C_2 \delta^r \sup_{0<\delta\le 1} \delta^{-s}  \int_{\R^d}\int_{\R^d} | h(x+z)-h(x-z)| J_\delta(z)\zeta(x)\,dx\,dz 
     + C_2 \delta^r ||h||_{L^1(\R^d)}.\label{lem:modulus-continuity-part-2}
    \end{align} 
 \end{itemize}
  \end{lem}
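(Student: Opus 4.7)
The proof of part (a) is direct and exploits the compact support of the mollifier, namely $\mathrm{supp}(J_\delta) \subset \{|z|\le\delta\}$. The plan is to insert the factor $|z|^s/|z|^s$ into the integrand so as to isolate the target supremum:
\[
\iint |h(x+z)-h(x-z)|J_\delta(z)\zeta(x)\,dx\,dz \le \Bigl(\sup_{|z|\le\delta}|z|^{-s}\int |h(x+z)-h(x-z)|\zeta(x)\,dx\Bigr)\int_{|z|\le\delta}|z|^{s}J_\delta(z)\,dz.
\]
The residual moment reduces to $\delta^s\int|w|^sJ(w)\,dw$ under the substitution $z=\delta w$, and since $\delta\le 1$ together with $r<s$ gives $\delta^s\le\delta^r$, part (a) follows.

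For part (b) the strategy is an interpolation argument via mollification. I introduce an auxiliary scale $\tau$ (to be chosen later), set $h_\tau:=h\ast J_\tau$, and for any fixed $|z_0|\le\delta$ decompose
\[
h(x+z_0)-h(x) = \bigl[h(x+z_0)-h_\tau(x+z_0)\bigr] + \bigl[h_\tau(x+z_0)-h_\tau(x)\bigr] + \bigl[h_\tau(x)-h(x)\bigr].
\]
For the outer two terms I would start from the representation $h_\tau(y)-h(y)=\int J_\tau(w)[h(y-w)-h(y)]\,dw$, then apply the shift $y\mapsto y+w/2$ and use the symmetry of $J_\tau$ to rewrite $\int|h_\tau(y)-h(y)|\zeta(y)\,dy$ as an integral of the genuine symmetric difference $|h(y+\tilde w)-h(y-\tilde w)|$ against a rescaled mollifier $\tilde J_{\tau/2}(\tilde w)=2^d J_\tau(2\tilde w)$. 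Invoking the definition of the supremum on the right-hand side of (b), this contributes a term of order $\tau^{s}\,\Psi$, where $\Psi$ denotes that supremum. For the middle term I would use the identity $h_\tau(x+z_0)-h_\tau(x)=\int[J_\tau(y+z_0)-J_\tau(y)]h(x-y)\,dy$ together with the $L^1$ bound $\|J_\tau(\cdot+z_0)-J_\tau(\cdot)\|_{L^1}\le C|z_0|\tau^{-1}$ coming from the smoothness of $J$, which after Fubini gives a contribution of order $(\delta/\tau)\|h\|_{L^1(\R^d)}$.

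The key step, and the principal technical obstacle, is the balancing: I would pick $\tau=\tau(\delta)$ as an appropriate power of $\delta$ so that both $\tau^s$ and $\delta/\tau$ are dominated by $\delta^r$. The hypothesis $r<s$ is precisely what supplies the exponent slack needed for this optimization. Once the balance is fixed, one passes from the pointwise decomposition to $\sup_{|z_0|\le\delta}$ using the uniformity of the bounds in $z_0$, and the small translations of $\zeta$ introduced by the shifts are absorbed by replacing $\zeta$ with a slightly enlarged compactly supported test function whose $L^\infty$ and $L^1$ norms are incorporated into the final constant $C_2(J,d,r,s)$. The detailed computation along these lines is carried out in \cite[Lemma 2]{Chen-karlsen_2012}.
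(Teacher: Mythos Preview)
The paper does not supply its own proof of this lemma; like you, it simply cites \cite[Lemma~2]{Chen-karlsen_2012}. Your argument for part~(a) is correct (under the tacit restriction $\delta\le 1$, which is the only regime in which the lemma is actually applied).

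For part~(b), however, your balancing step contains a genuine gap. With the two estimates you derive --- outer terms $\le C\tau^{s}\Psi$ and middle term $\le C(\delta/\tau)\|h\|_{L^1}$ --- requiring simultaneously $\tau^{s}\le \delta^{r}$ and $\delta/\tau\le \delta^{r}$ forces (writing $\tau=\delta^{\alpha}$) the chain $r/s\le\alpha\le 1-r$, which is consistent only when $r\le s/(1+s)$. This is strictly stronger than $r<s$, so the claim that ``the hypothesis $r<s$ is precisely what supplies the exponent slack'' is incorrect with the crude middle-term bound $\|J_\tau(\cdot+z_0)-J_\tau\|_{L^1}\le C|z_0|/\tau$. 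To reach the full range $r<s$ one must estimate the middle term more carefully, feeding the symmetric-difference structure back into it --- for instance by exploiting the oddness of $\nabla J_\tau$ to write $\nabla h_\tau(x)=\tfrac12\int\nabla J_\tau(v)\bigl[h(x-v)-h(x+v)\bigr]\,dv$ and bounding this through $\Psi$ rather than through $\|h\|_{L^1}$ alone. Some such refinement is what the cited reference supplies.
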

  
Now we are in a position to state and prove a theorem regarding fractional BV estimation of solutions of \eqref{eq:stability-3}.
\begin{thm}[Fractional BV estimate]
  \label{thm:fractional BV estimate}
Let the assumptions ~\ref{A1}, ~\ref{A3}, ~\ref{B31}, ~\ref{B3}, ~\ref{B4} and ~\ref{B5} hold. Let $u_\eps$ be a solution 
of \eqref{eq:stability-3} with the initial data $u_0(x)$ belongs to the Besov space $B^{\mu}_{1, \infty} (\R^d)$ for 
some $\mu \in (\frac27,1)$. 
Moreover, we assume that $f^{\prime\prime} \in L^\infty$. Then, for fixed $T>0$ and $R>0$, there exits a constant $C(T,R)$, 
independent of $\eps$, such that for any $0< t <T$,
 \begin{align*}
  \sup_{|y| \le \delta} \E\Big[\int_{K_R} \big|u_\eps(t,x+y)-u_\eps(t,x)\big|\,dx\Big] \le C(T,R)\,\delta^r,
 \end{align*} 
for some $r \in (0,\frac{2}{7})$ and $K_{R}:=\{x: |x| \le R\}$.
\end{thm}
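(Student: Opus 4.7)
The proof would follow the doubling-of-variables/modulus-of-continuity strategy of Chen--Karlsen \cite{Chen-karlsen_2012}, adapted to the present L\'evy setting. Fix a nonnegative cutoff $\zeta \in C_c^\infty(\R^d)$ with $\zeta \equiv 1$ on $K_R$, and a sequence of symmetric mollifiers $\{J_\delta\}$ on $\R^d$. The goal is to control
\begin{equation*}
Q_\eps(t) := \E\Big[\int_{\R^{2d}} \beta_\xi(u_\eps(t,x) - u_\eps(t,y))\,J_\delta(x-y)\,\zeta(x)\,dx\,dy\Big].
\end{equation*}
I would apply the It\^o--L\'evy formula twice (once to $u_\eps(t,x)$ and once to $u_\eps(t,y)$, sharing the same driving $W$ and $N$) to $\beta_\xi(u_\eps(t,x)-u_\eps(t,y))\,J_\delta(x-y)\,\zeta(x)$, first mollifying $u_\eps$ in space by $\tau_\kappa$ exactly as in Section~\ref{sec:apriori+existence} to legitimize the calculation, and then passing $\kappa\to0$.

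Next I would estimate term by term the resulting inequality. The flux contribution is controlled, thanks to $f''\in L^\infty$ and the identity \eqref{esti:deri-entropy-flux}, by $C\,\xi\,t\,\|f''\|_\infty$. The viscous regularization term is non-positive and is discarded. The Brownian quadratic-variation term is bounded using \ref{B31} and the support property of $J_\delta$: writing $(\sigma(x,u_\eps(t,x))-\sigma(y,u_\eps(t,y)))^2 \le 2K_1^2\big(|x-y|^2+|u_\eps(t,x)-u_\eps(t,y)|^2\big)$ and combining with $\beta_\xi''\le C\xi^{-1}$ and $r^2\beta_\xi''(r)\le C\xi$ gives a contribution of size $C(\delta^2/\xi+\xi)\,t$. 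The L\'evy compensator term is treated analogously using \ref{B3}--\ref{B4} together with the argument used to estimate the term $\mathcal{G}$ in the proof of Theorem~\ref{thm:bv-viscous}, in particular the absorption inequality $|u|\le(1-\lambda^*)^{-1}|u+\lambda\,\eta|$ coming from \ref{B3}.

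The principal obstacle is the degenerate-diffusion term. After integration by parts in $x$ and $y$ and using the symmetric-kernel identity $\Delta_yJ_\delta(x-y)=\Delta_xJ_\delta(x-y)$, one gets a non-positive entropy-dissipation contribution (dropped) and a \textbf{cross term} of the form
\begin{equation*}
\int\!\!\int\beta_\xi''(u_\eps(t,x)-u_\eps(t,y))\big(\sqrt{A'(u_\eps(t,x))}-\sqrt{A'(u_\eps(t,y))}\big)^2\nabla_xu_\eps(t,x)\!\cdot\!\nabla_yu_\eps(t,y)\,J_\delta(x-y)\,\zeta(x)\,dxdy,
\end{equation*}
together with $\nabla J_\delta$ boundary pieces. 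On the support of $\beta_\xi''$ one has $|u_\eps(t,x)-u_\eps(t,y)|\le\xi$, so by \ref{B5} the factor $\big(\sqrt{A'(u_\eps(t,x))}-\sqrt{A'(u_\eps(t,y))}\big)^2\le\omega_A(\xi)^2$. Combining with Cauchy--Schwarz on $\nabla u_\eps$ and the Kirchhoff bound \eqref{bounds:a-priori-viscous-solution}, and using the hypothesis $\omega_A(r)=o(r^{2/3})$, this cross term is controlled by $C\,\omega_A(\xi)^2\xi^{-1}t$; the $\nabla J_\delta$ pieces contribute $C\eps/\delta$ and vanish with $\eps$. This is the step where the precise rate in \ref{B5} is essential, and it is the main technical obstacle.

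Collecting everything, using $\beta_\xi(r)\ge|r|-M_1\xi$, the Besov bound $\E\int\!\int|u_0(x)-u_0(y)|J_\delta(x-y)\zeta(x)\,dxdy\le C\delta^\mu$, and a Gr\"onwall argument, one obtains
\begin{equation*}
\E\!\int_{\R^{2d}}|u_\eps(t,x)-u_\eps(t,y)|J_\delta(x-y)\zeta(x)\,dxdy \le C(T,R)\left(\delta^\mu+\xi+\frac{\delta^2}{\xi}+\frac{\omega_A(\xi)^2}{\xi}\right).
\end{equation*}
Optimizing $\xi$ against $\delta$ (taking $\xi\sim\delta^\alpha$ with $\alpha$ balancing $\delta^2/\xi$ against $\omega_A(\xi)^2/\xi$, which is legitimate since $\omega_A(\xi)^2/\xi=o(\xi^{1/3})$) yields a bound of order $\delta^s$ for some $s>2/7$ (the constraint $\mu>2/7$ guarantees the initial-data contribution does not dominate unfavorably). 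Finally I would invoke part~(b) of Lemma~\ref{lem: deterministic-modulus-continuity} to convert this double-integral bound into the required sup-translation estimate, losing an arbitrarily small exponent $s-r>0$ so that any $r\in(0,2/7)$ is admissible, which yields the claimed uniform-in-$\eps$ fractional BV estimate on $K_R$.
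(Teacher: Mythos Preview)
Your overall strategy (double variables, It\^o--L\'evy formula, then Lemma~\ref{lem: deterministic-modulus-continuity}) matches the paper, but your treatment of the degenerate-diffusion cross term has a genuine gap. You propose to bound
\[
\int\!\!\int \beta_\xi''(u_\eps(t,x)-u_\eps(t,y))\big(\sqrt{A'(u_\eps(t,x))}-\sqrt{A'(u_\eps(t,y))}\big)^2\,\nabla_x u_\eps(t,x)\cdot\nabla_y u_\eps(t,y)\,J_\delta\,\zeta
\]
by ``Cauchy--Schwarz on $\nabla u_\eps$ and the Kirchhoff bound \eqref{bounds:a-priori-viscous-solution}''. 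This does not work uniformly in $\eps$: the Kirchhoff bound controls $\nabla G(u_\eps)=\sqrt{A'(u_\eps)}\,\nabla u_\eps$ in $L^2$, not the bare gradient $\nabla u_\eps$. The only uniform bound on $\nabla u_\eps$ is $\eps\,\|\nabla u_\eps\|_2^2\le C$, which would leave an $\eps^{-1}$ in your estimate. Consequently your claimed contribution $C\,\omega_A(\xi)^2\xi^{-1}t$ (with no $\delta$-dependence) is not justified, and the ``better than $2/7$'' exponent you arrive at is an artifact of this error.

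The paper avoids gradient bounds altogether by a \emph{second} integration by parts: writing the cross term as $\nabla_x\{\cdots\}$ and then as $\nabla_y\{\cdots\}$, both derivatives are transferred to the kernel $J_\delta$. This produces $\mathrm{div}_y[\nabla_x J_\delta]\sim \delta^{-2}$, while the remaining double integral $\int_\sigma^{u_\eps(x)}\!\int_{u_\eps(y)}^{u_\eps(x)}\beta_\xi''(\tau-\sigma)(\sqrt{A'(\tau)}-\sqrt{A'(\sigma)})^2\,d\tau\,d\sigma$ is bounded by $C\,\omega_A(\xi)^2\,|u_\eps(x)-u_\eps(y)|\le C\,\xi^{4/3}|u_\eps(x)-u_\eps(y)|$ via~\ref{B5}. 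The resulting term $C\,\xi^{4/3}\delta^{-2}t$ is what forces the choice $\xi\sim\delta^{12/7}$ and the exponent $2/7$. You should also note that the paper uses the symmetric test function $J_\delta(\tfrac{x-y}{2})\zeta(\tfrac{x+y}{2})$ with $\zeta\in\mathcal{K}$ (so that $|\nabla\zeta|,|\Delta\zeta|\le C\zeta$), which is what makes the $\eps$-viscous contribution reduce to a harmless $\eps\,\beta_\xi(\cdots)J_\delta\,\Delta\zeta$ term that feeds into Gr\"onwall; and that both parts (a) and (b) of Lemma~\ref{lem: deterministic-modulus-continuity} are needed to pass between the mollified double integral and the translation supremum.
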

  
\begin{proof}
Let $\zeta\in \mathcal{K}:= \{\zeta  \in C^2(\R^d)\cap L^1(\R^d)\cap L^{\infty}(\R^d): |\grad \zeta| \le C \zeta, |\Delta \zeta| \le C \zeta\}$ be any function. Then by Lemma~\ref{imp}, there exists a sequence of functions 
$\lbrace \zeta_R \rbrace \subset C_c^{\infty}(\R^d)$ such that, in particular, $\zeta_R \mapsto \zeta$ pointwise.
Let $J_\delta$ be a sequence of mollifier in $\R^d$ as mentioned in Lemma \ref{lem: deterministic-modulus-continuity}.
Consider the test function 
$$\psi_{\delta}^R(x,y): =J_\delta \left(\frac{x-y}{2} \right)\, \zeta_R \left(\frac{x+y}{2} \right).$$ 
In the sequel, with a slight abuse of notations, we denote $\psi_\delta=\psi_\delta^R$ and $\zeta=\zeta_R$.
Subtracting two solutions $u_\eps(t,x)$, $u_\eps(t,y)$ of \eqref{eq:stability-3}, 
and applying It\^{o}-L\'{e}vy formula to that resulting equations, we obtain (cf. \cite{Chen-karlsen_2012})
\begin{align}
 &\E \Big[\int_{\R^d}\int_{\R^d} \beta_\xi \big( u_\eps(t,x) -u_\eps(t,y)\big) \psi_\delta(x,y) \,dx \,dy \Big]
- \E \Big[\int_{\R^d}\int_{\R^d} \beta_\xi\big( u_\eps(0,x) -u_\eps(0,y)\big) \psi_\delta(x,y) \,dx\, dy \Big] \notag \\
& \le \E \Big[\int_{0}^t \int_{\R^d}\int_{\R^d} f^{\beta} \big(u_\eps(s,x),u_\eps(s,y)\big)\cdot \grad \zeta(\frac{x+y}{2}) J_\delta(\frac{x-y}{2})\,dx\,dy\,ds \Big] \notag \\
  & + \E \Big[\int_{0}^t \int_{\R^d}\int_{\R^d} \Big(f^{\beta}\big(u_\eps(s,y),u_\eps(s,x)\big)- f^{\beta} \big(u_\eps(s,x),u_\eps(s,y)\big)\Big)\cdot \grad_y \psi_\delta(x,y)\,dx\,dy\,ds \Big]\notag \\
   &
- \E \Big[ \int_{0}^t \int_{\R^d}\int_{\R^d} \beta_\xi^{\prime\prime}\big(u_\eps(s,x)-u_\eps(s,y)\big)
  \Big(|\grad_y G(u_\eps(s,y))|^2 + |\grad_x G(u_\eps(s,x)) |^2\Big)\,\psi_\delta(x,y)\,dx\,dy\,ds  \Big] \notag
 \\ &+ \E \Big[\int_{0}^t \int_{\R^d}\int_{\R^d} \big(A^{\beta} (u_\eps(s,y), u_\eps(s,x)) \Delta_{y} \psi_\delta(x,y) + A^{\beta} (u_\eps(s,x), u_\eps(s,y)) \Delta_{x} \psi_\delta(x,y) \big) \,dx\,dy\,ds  \Big] \notag \\
  &+ \E \Big[ \int_{0}^t \int_{\R^d}\int_{\R^d} 
  \eps\,\beta_\xi\big( u_\eps(r,x) -u_\eps(r,y)\big) J_\delta(\frac{x-y}{2}) \Delta \zeta(\frac{x+y}{2})\, dx\, dy\, dr \Big]\notag \\
  & + \frac12 \E \Big[ \int_{0}^t \int_{\R^d}\int_{\R^d} 
 \beta_\xi^{\prime \prime} \big( u_\eps(r,x) -u_\eps(r,y)\big) \Big( \sigma(x, u_\eps(r,x) - \sigma(y, u_\eps(r,y) \Big)^2 
\psi_\delta(x,y) \,dx \,dy \, dr \Big] \notag \\
 & + \E \Big[\int_{0}^t\int_{|z| > 0} \int_{\R^d}\int_{\R^d} \int_{\rho=0}^1 
 \beta_\xi^{\prime \prime}\Big( u_\eps(r,x) -u_\eps(r,y)+\rho\big(\eta(x,u_\eps(r,x);z)-\eta(y,u_\eps(r,y);z)\big)\Big)\notag \\
&\hspace{3cm}\times \big| \eta(x,u_\eps(r,x);z)
-\eta(y,u_\eps(r,y);z)\big|^2   \psi_\delta(x,y)\,d\rho\,dx \,dy\,m(dz) \,dr \Big]. \label{test-1}
\end{align} 
To this end, we see that
\begin{align*}
\grad_{x} \psi_\delta(x,y) + \grad_{y} \psi_\delta(x,y) &= 2\, \grad \zeta \left(\frac{x+y}{2}\right) J_\delta\left(\frac{x-y}{2}\right), \\
\Big| A^\beta(u,v)-A^\beta(v,u)\Big| &\le C ||A^{\prime\prime}||_{\infty}\,\xi |u-v|.
\end{align*}
Moreover, a similar analysis as in 
Lemma~\ref{estim_A1_3}-\eqref{esti:a1+a2+a3-final} reveals that
\begin{align*}
&\int_{0}^t \int_{\R^d}\int_{\R^d} \big(A^{\beta} (u_\eps(s,y), u_\eps(s,x)) \Delta_{y} \psi_\delta(x,y)
+ A^{\beta} (u_\eps(s,x), u_\eps(s,y)) \Delta_{x} \psi_\delta(x,y) \big) \,dx\,dy\,ds \\
& -  \int_{0}^t \int_{\R^d}\int_{\R^d} \beta_\xi^{\prime\prime}\big(u_\eps(s,x)-u_\eps(s,y)\big)
  \Big(|\grad_y G(u_\eps(s,y))|^2 + |\grad_x G(u_\eps(s,x)) |^2\Big)\,\psi_\delta(x,y)\,dx\,dy\,ds 
  \\
& \le C\frac{\xi^{4/3}}{\delta^2}\| \zeta\|_{L^\infty(\R^d)} t + C \frac{\xi^{4/3}}{\delta}\| \grad\zeta\|_{L^\infty(\R^d)} t\\
&\quad -  \int_0^t \int_{\R^{2d}} \beta^\prime_\xi(u_\eps(s,x)-u_\eps(s,y))[\nabla_x A(u_\eps(s,x))
- \nabla_y A(u_\eps(s,y))]  J_\delta(\frac{x-y}{2}) \nabla \zeta(\frac{x+y}{2})\,dx\,dy\,ds \\
& \le C\frac{\xi^{4/3}}{\delta^2}\| \zeta\|_{L^\infty(\R^d)} t  +C \frac{\xi^{4/3}}{\delta}\| \grad\zeta\|_{L^\infty(\R^d)} t \\
& + \int_{0}^t \int_{\R^d}\int_{\R^d} A^\beta \big(u_\eps(s,x),u_\eps(s,y)\big)\cdot 
\Delta \zeta(\frac{x+y}{2}) J_\delta(\frac{x-y}{2})\,dx\,dy\,ds \\
& + \int_{0}^t \int_{\R^d}\int_{\R^d}\Big(A^\beta\big(u_\eps(s,y),u_\eps(s,x)\big)- A^\beta \big(u_\eps(s,x),u_\eps(s,y)\big)\Big)
\cdot \grad_y \bigg(\grad \zeta(\frac{x+y}{2}) J_\delta(\frac{x-y}{2})\bigg)\,dx\,dy\,ds,
\end{align*} 
where, in view of \ref{B5}, we have used the fact that (cf. Lemma~\ref{estim_A1_3})
\begin{align*}
&\mathcal{A}_{1} +  \mathcal{A}_{2,1} + \mathcal{A}_{3,1} 
\\ \leq &
 \Bigg| \E\Big[\int_{\Pi_t}\int_{\R^d} \Bigg\{\int_{v_\eps(t,y)}^{u_\theta(t,x)}\beta_\xi^{\prime\prime}\big( \tau-v_{\eps}(t,y)\big) 
 \big[\sqrt{A'(\tau)}- \sqrt{A'(v_\eps(t,y))}\big]^2\,d\tau\Bigg\} \\
 & \hspace{8cm} \times \grad_y v_\eps(t,y)
  \nabla_x J_{\delta}(\frac{x-y}{2}) \zeta(\frac{x+y}{2}) \,dx\,dy\,dt \Big] \Bigg| 
\\ = &
 \Bigg| \E\Big[\int_{\Pi_t}\int_{\R^d} \Bigg\{\nabla_y \int^{v_\eps(t,y)}_{u_\theta(t,x)} \int_{\sigma}^{u_\theta(t,x)}\beta_\xi^{\prime\prime}\big( \tau-\sigma\big)  \big[\sqrt{A'(\tau)}- \sqrt{A'(\sigma)}\big]^2\,d\tau\,d\sigma\Bigg\} \\
 & \hspace{8cm} 
  \times \nabla_x J_{\delta}(\frac{x-y}{2}) \zeta(\frac{x+y}{2}) \,dx\,dy\,dt \Big] \Bigg| 
\\ = &
 \Bigg| \E\Big[\int_{\Pi_t}\int_{\R^d} \Bigg\{\int^{v_\eps(t,y)}_{u_\theta(t,x)} \int_{\sigma}^{u_\theta(t,x)}\beta_\xi^{\prime\prime}\big( \tau-\sigma\big)  \big[\sqrt{A'(\tau)}- \sqrt{A'(\sigma)}\big]^2\,d\tau\,d\sigma\Bigg\} \\
 & \hspace{8cm} 
 \mathrm{div}_y\bigg[ \nabla_x J_{\delta}(\frac{x-y}{2}) \zeta(\frac{x+y}{2})\bigg] \,dx\,dy\,dt \Big] \Bigg| 
\\ \leq &
C \xi^{4/3} \E\Big[\int_{\Pi_t}\int_{\R^d} |v_\eps(t,y)-u_\theta(t,x)| |
 \mathrm{div}_y\bigg[ \nabla_x J_{\delta}(\frac{x-y}{2}) \zeta(\frac{x+y}{2})\bigg]| \,dx\,dy\,dt \Big] 
\\ \leq &
C \frac{\xi^{4/3}}{\delta^2}\| \zeta\|_{L^\infty(\R^d)} t + C \frac{\xi^{4/3}}{\delta}\| \grad\zeta\|_{L^\infty(\R^d)} t
\end{align*}
At this point we let $R\mapsto \infty$ in the test function $\zeta=\zeta_R$. Moreover, keeping in mind that for any function $\zeta \in \mathcal{K}$
satisfies $|\nabla \zeta(x)| \le C \zeta(x)$, and $|\Delta \zeta(x)| \le C \zeta(x)$, we have from \eqref{test-1} 
\begin{align}
& \E \Big[\int_{\R^d}\int_{\R^d} \beta_\xi\big( u_\eps(t,x) -u_\eps(t,y)\big) \psi_\delta(x,y) \,dx \,dy\Big]
- \E \Big[\int_{\R^d}\int_{\R^d} \beta_{\xi}\big( u_\eps(0,x) -u_\eps(0,y)\big) \psi_\delta(x,y) \,dx\, dy\Big] \notag \\
& \le  C \frac{\xi^{4/3}}{\delta^2}\| \zeta\|_{L^\infty(\R^d)} t + C \frac{\xi^{4/3}}{\delta}\|\zeta\|_{L^\infty(\R^d)} t \notag \\
& + C (||f^\prime||_{\infty} + ||A^{\prime}||_{\infty})\int_{0}^t\,\E \Big[\int_{\R^d}\int_{\R^d}  \big|u_\eps(s,x)-u_\eps(s,y)\big| 
\zeta(\frac{x+y}{2}) J_\delta(\frac{x-y}{2})\,dx\,dy\Big]\,ds  \notag \\
& +C ||f^{\prime\prime}||_{\infty} \xi\,\E\Big[\int_{0}^t \int_{\R^d}\int_{\R^d} \big|u_\eps(s,x)-u_\eps(s,y)\big| \zeta(\frac{x+y}{2})
 J_\delta(\frac{x-y}{2})\,dx\,dy\,ds\Big] \notag \\
 & + C ||f^{\prime\prime}||_{\infty} \xi\,\E \Big[\int_{0}^t \int_{\R^d}\int_{\R^d} \big|u_\eps(s,x)-u_\eps(s,y)\big| \zeta(\frac{x+y}{2})
|\grad_y J_\delta(\frac{x-y}{2})|\,dx\,dy\,ds\Big] \notag \\
& +C ||A^{\prime\prime}||_{\infty} \xi\,\E\Big[\int_{0}^t \int_{\R^d}\int_{\R^d} \big|u_\eps(s,x)-u_\eps(s,y)\big| \zeta(\frac{x+y}{2})
 J_\delta(\frac{x-y}{2})\,dx\,dy\,ds\Big] \notag \\
 & + C ||A^{\prime\prime}||_{\infty} \xi\,\E \Big[\int_{0}^t \int_{\R^d}\int_{\R^d} \big|u_\eps(s,x)-u_\eps(s,y)\big| \zeta(\frac{x+y}{2})
|\grad_y J_\delta(\frac{x-y}{2})|\,dx\,dy\,ds\Big] \notag \\
&+   C\,\eps\, \int_{0}^t \E \Big[ \int_{\R^d}\int_{\R^d} 
  \big| u_\eps(r,x) -u_\eps(r,y)\big| J_\delta(\frac{x-y}{2}) \zeta(\frac{x+y}{2})\, dx\, dy\Big]\, dr \notag \\
  & + \frac12 \E \Big[ \int_{0}^t \int_{\R^d}\int_{\R^d} 
 \beta_\xi^{\prime \prime} \big( u_\eps(r,x) -u_\eps(r,y)\big) \Big( \sigma(x, u_\eps(r,x) - \sigma(y, u_\eps(r,y) \Big)^2 
\psi_\delta(x,y) \,dx \,dy \, dr \Big] \notag \\
 & + \E \Big[\int_{0}^t\int_{|z| > 0} \int_{\R^d}\int_{\R^d} \int_{\rho=0}^1
 \beta_\xi^{\prime \prime}\Big( u_\eps(r,x) -u_\eps(r,y)+\rho\big(\eta(x,u_\eps(r,x);z)-\eta(y,u_\eps(r,y);z)\big)\Big)\notag
\\&\hspace{4.5cm}\times \big| \eta(x,u_\eps(r,x);z)
-\eta(y,u_\eps(r,y);z)\big|^2  \psi_\delta(x,y)\,d\rho\,dx \,dy\,m(dz) \,dr\Big]. \label{eq:bv-frac}
\end{align}

As before, with the help of the uniform $L^1$ estimate \eqref{L1-bound}, we can conclude 
 \begin{align}
  &C  ||f^{\prime\prime}||_{\infty} \xi\,\E \Big[\int_{0}^t \int_{\R^d}\int_{\R^d} \big|u_\eps(s,x)-u_\eps(s,y)\big| \zeta(\frac{x+y}{2})
 J_\delta(\frac{x-y}{2})\,dx\,dy\,ds\Big] \notag \\
 & + C ||f^{\prime\prime}||_{\infty} \xi\,\E \Big[\int_{0}^t \int_{\R^d}\int_{\R^d} \big|u_\eps(s,x)-u_\eps(s,y)\big| \zeta(\frac{x+y}{2})
|\grad_y J_\delta(\frac{x-y}{2})|\,dx\,dy\,ds\Big] \notag \\
& +C ||A^{\prime\prime}||_{\infty} \xi\,\E\Big[\int_{0}^t \int_{\R^d}\int_{\R^d} \big|u_\eps(s,x)-u_\eps(s,y)\big| \zeta(\frac{x+y}{2})
 J_\delta(\frac{x-y}{2})\,dx\,dy\,ds\Big] \notag \\
 & + C ||A^{\prime\prime}||_{\infty} \xi\,\E \Big[\int_{0}^t \int_{\R^d}\int_{\R^d} \big|u_\eps(s,x)-u_\eps(s,y)\big| \zeta(\frac{x+y}{2})
|\grad_y J_\delta(\frac{x-y}{2})|\,dx\,dy\,ds\Big] \notag \\
&  \le  C (||f^{\prime\prime}||_\infty + ||A^{\prime\prime}||_{\infty}) \big( \xi + \frac{\xi}{\delta} \big) ||\zeta||_{L^{\infty}(\R^d)} t. \label{esti:middle term}
 \end{align}
Next, for the last two terms of \eqref{eq:bv-frac}, we follow the estimates given in \cite{BaVaWit_2014,BisMajVal}, to conclude 
\begin{align}
 &  \frac12 \E \Big[ \int_{0}^t \int_{\R^d}\int_{\R^d} 
 \beta_\xi^{\prime \prime} \big( u_\eps(r,x) -u_\eps(r,y)\big) \Big( \sigma(x, u_\eps(r,x) - \sigma(y, u_\eps(r,y) \Big)^2 
\psi_\delta(x,y) \,dx \,dy \, dr \Big] \notag 
\\ \leq &
\E \Big[ \int_{0}^t \int_{\R^d}\int_{\R^d} 
\beta_\xi^{\prime \prime} \big( u_\eps(r,x) -u_\eps(r,y)\big) \Big( \sigma(x, u_\eps(r,x) - \sigma(x, u_\eps(r,y) \Big)^2 
\psi_\delta(x,y) \,dx \,dy \, dr \Big] \notag 
\\&+
 \E \Big[ \int_{0}^t \int_{\R^d}\int_{\R^d} 
 \beta_\xi^{\prime \prime} \big( u_\eps(r,x) -u_\eps(r,y)\big) \Big( \sigma(x, u_\eps(r,y) - \sigma(y, u_\eps(r,y) \Big)^2 
\psi_\delta(x,y) \,dx \,dy \, dr \Big] \notag
\\ \leq &
C\E \Big[ \int_{0}^t \int_{\R^d}\int_{\R^d} 
\beta_\xi^{\prime \prime} \big( u_\eps(r,x) -u_\eps(r,y)\big) |u_\eps(r,x) - u_\eps(r,y) |^2 
\psi_\delta(x,y) \,dx \,dy \, dr \Big] \notag 
\\&+
C \E \Big[ \int_{0}^t \int_{\R^d}\int_{\R^d} 
 \beta_\xi^{\prime \prime} \big( u_\eps(r,x) -u_\eps(r,y)\big) |y - x|^2 
\psi_\delta(x,y) \,dx \,dy \, dr \Big] \notag
\\ \leq &
C\E \Big[ \int_{0}^t \int_{\R^d}\int_{\R^d} 
\beta_\xi \big( u_\eps(r,x) -u_\eps(r,y)\big) 
\psi_\delta(x,y) \,dx \,dy \, dr \Big] \notag 
+
C\frac{\delta^2}{\xi} \E \Big[ \int_{0}^t \int_{\R^d}\int_{\R^d}  
\psi_\delta(x,y) \,dx \,dy \, dr \Big] \notag
\\ \leq &
C\E \Big[ \int_{0}^t \int_{\R^d}\int_{\R^d} 
\beta_\xi \big( u_\eps(r,x) -u_\eps(r,y)\big) 
\psi_\delta(x,y) \,dx \,dy \, dr \Big] 
+
C\frac{\delta^2}{\xi} t \|\zeta\|_{L^1(\R^d)}
\end{align}
and a similar estimate reveals that 
\begin{align}
 &\E \Big[\int_{0}^t\int_{|z| > 0} \int_{\R^d}\int_{\R^d} \int_{\rho=0}^1
 \beta_\xi^{\prime \prime}\Big( u_\eps(r,x) -u_\eps(r,y)+\rho\big(\eta(x,u_\eps(r,x);z)-\eta(y,u_\eps(r,y);z)\big)\Big)\notag
\\&\hspace{4.5cm}\times \big| \eta(x,u_\eps(r,x);z)
-\eta(y,u_\eps(r,y);z)\big|^2  \psi_\delta(x,y)\,d\rho\,dx \,dy\,m(dz) \,dr\Big] \notag
\\ \leq &
C\E \Big[ \int_{0}^t \int_{\R^d}\int_{\R^d} 
\beta_\xi \big( u_\eps(r,x) -u_\eps(r,y)\big) 
\psi_\delta(x,y) \,dx \,dy \, dr \Big] 
+
C\frac{\delta^2}{\xi} t \|\zeta\|_{L^1(\R^d)}
.\label{esti:last term}
\end{align} 
Now we make use of \eqref{eq:approx to abosx}, \eqref{esti:middle term}, and \eqref{esti:last term} 
 to \eqref{eq:bv-frac} and conclude
\begin{align}
 & \E \Bigg[\int_{\R^d}\int_{\R^d}  \big| u_\eps(t,x) -u_\eps(t,y)\big|J_\delta(\frac{x-y}{2}) \zeta(\frac{x+y}{2}) \,dx \,dy\Bigg] \notag \\
  &\le   \E \Bigg[\int_{\R^d}\int_{\R^d} \big| u_\eps(0,x) -u_\eps(0,y)\big|J_\delta(\frac{x-y}{2}) \zeta(\frac{x+y}{2}) \,dx\, dy\Bigg] 
 + C \frac{\xi^{4/3}}{\delta^2}\| \zeta\|_{L^\infty(\R^d)} t \notag \\
 &
 + C\big(1+||f^\prime||_{\infty}+ ||A^\prime||_{\infty}\big)\int_{0}^t \E \Big[\int_{\R^d}\int_{\R^d} \big|u_\eps(s,x)-u_\eps(s,y)\big| 
\zeta(\frac{x+y}{2}) J_\delta(\frac{x-y}{2})\,dx\,dy\Big]\,ds \notag\\
&+ C (||f^{\prime\prime}||_\infty + ||A^{\prime\prime}||_{\infty}) \big( \xi + \frac{\xi}{\delta} \big) ||\zeta||_{L^{\infty}(\R^d)} t 
 + C( \xi+\frac{\delta^2}{\xi}t)\,||\zeta||_{L^1(\R^d)}.
\end{align} 
A simple application of Gronwall's inequality reveals that
\begin{align}
  & \E \Bigg[\int_{\R^d}\int_{\R^d}  \big| u_\eps(t,x) -u_\eps(t,y)\big|J_\delta(\frac{x-y}{2}) \zeta(\frac{x+y}{2}) \,dx \,dy \Bigg]\notag 
  \\& \le  
\exp^{ t\,C\big(1+ ||f^\prime||_{\infty}+ ||A^\prime||_{\infty}\big)} \E \Big[\int_{\R^d}\int_{\R^d} \big| u_\eps(0,x) -u_\eps(0,y)\big|
J_\delta(\frac{x-y}{2}) \zeta(\frac{x+y}{2}) \,dx\, dy\Big] \notag 
\\& +  
C\exp^{ t\,C\big(1+ ||f^\prime||_{\infty}+ ||A^\prime||_{\infty}\big)}
\Big( \frac{\xi^{4/3}}{\delta^2}\| \zeta\|_{L^\infty(\R^d)} t
+
 (||f^{\prime\prime}||_\infty + ||A^{\prime\prime}||_{\infty}) \big( \xi + \frac{\xi}{\delta} \big) ||\zeta||_{L^{\infty}(\R^d)} t \notag \\
 & \hspace{6.5cm}+ ( \xi+\frac{\delta^2}{\xi}t)\,||\zeta||_{L^1(\R^d)}\Big).
 \label{eq:stochastic-final-8}
\end{align}  
Choosing $\xi = C \delta^\frac{12}{7}$ in \eqref{eq:stochastic-final-8}, we obtain
\begin{align*}
\E \Big[\int_{\R^d}\int_{\R^d}  \big| u_\eps(t,x) & -u_\eps(t,y)\big|J_\delta(\frac{x-y}{2}) \zeta(\frac{x+y}{2})\,dx\,dy\Big] \notag \\
 & \le   C(T) \E \Big[\int_{\R^d}\int_{\R^d}  \big| u_\eps(0,x) -u_\eps(0,y)\big|
J_\delta(\frac{x-y}{2}) \zeta(\frac{x+y}{2}) \,dx\, dy\Big] \notag \\
&   \hspace{4cm} + C(T) \Big( \big(\delta^\frac{2}{7}  +\delta^{5/7}\big) ||\zeta||_{L^\infty(\R^d)} +  \delta^\frac{2}{7}||\zeta||_{L^1(\R^d)} \Big),
\end{align*}  for some constant $C(T)>0$, independent of $\eps$.

Now we make use of the following change of variables 
$$\bar{x}= \frac{x-y}{2}, \,\, \text{and} \,\, \, \bar{y}=\frac{x+y}{2},$$
to rewrite the above inequality (dropping  the bar). The result is
\begin{align}
  \E \Big[\int_{\R^d}\int_{\R^d}  \big| u_\eps(t,x+y) & -u_\eps(t,x-y)\big|J_\delta(y) \zeta(x) \,dx \,dy\Big] \notag \\
& \le   C(T) \E \Big[\int_{\R^d}\int_{\R^d}  \big| u_\eps(0,x+y) -u_\eps(0,x-y)\big|
J_{\delta}(y) \zeta(x) \,dx\, dy\Big]\notag \\
& \hspace{3cm} + C(T) \Big( \big(\delta^\frac{2}{7}  +\delta^{5/7}\big) ||\zeta||_{L^\infty(\R^d)} +  \delta^\frac{2}{7}||\zeta||_{L^1(\R^d)} \Big)
  \label{eq:stochastic-final-9}
\end{align}
In view of \eqref{lem:modulus-continuity-part-2} of the Lemma \ref{lem: deterministic-modulus-continuity}, we obtain
  for $0<r < s<1$
 \begin{align}
 \sup_{|y| \le \delta} \E \Big[\int_{\R^d} & \big|u_\eps(t,x+y)  -u_\eps(t,x)\big|\zeta(x)\,dx\Big] \notag \\
   & \le  C_2 \,\delta^r \sup_{0<\delta \le 1} \delta^{-s}  \E \Big[\int_{\R^d}\int_{\R^d} \big|u_\eps(t,x+y)-u_\eps(t,x-y)\big|
  J_{\delta}(y)\zeta(x)\,dx\,dy\Big]  \notag \\
  &\hspace{7.5cm}+ C_2 \delta^r \E \Big[||u_\eps(t,\cdot)||_{L^1(\R^d)}\Big]. \label{eq:stochastic-final-10}
  \end{align} 
Again, by \eqref{lem:modulus-continuity-part-1} of the Lemma \ref{lem: deterministic-modulus-continuity} and
by \eqref{eq:stochastic-final-9}, we see that for $0<r'<s'<1$
\begin{align}
   \sup_{0<\delta \le 1} \delta^{-s}  \E \Big[\int_{\R^d}\int_{\R^d} & \big|u_\eps(t,x+y)  -u_\eps(t,x-y)\big|
  J_\delta(y)\zeta(x)\,dx\,dy\Big] \notag 
  \\  & \le  
  C(T) \sup_{0<\delta \le 1} \delta^{-s} \E \Big[\int_{\R^d}\int_{\R^d} \big|u_\eps(0,x+y)-u_\eps(0,x-y)\big| J_\delta(y)
  \zeta(x)\,dx\,dy\Big] \notag 
  \\& \hspace{6.5cm} +
   C(T)\delta^{\frac27-s}\Big( ||\zeta||_{L^\infty(\R^d)} + ||\zeta||_{L^1(\R^d)} \Big) \notag \\
    & \le  C(T) \, C_1\, \delta^{-s+r'}\sup_{|y| \le \delta}\Bigg( |y|^{-s'} \, \E\Big[\int_{\R^d}  \big|u_\eps(0,x+y)-u_\eps(0,x)\big|
  \zeta(x)\,dx\Big] \Bigg)  \notag \\
  & \hspace{6.5cm}+ C(T)\delta^{\frac27-s}\Big( ||\zeta||_{L^\infty(\R^d)} + ||\zeta||_{L^1(\R^d)} \Big)  \notag \\
   & \le  C(T)\, \delta^{-s+r'}\E\Big[||u_0||_{B_{1, \infty}^{s'}(\R^d)}\Big] \,||\zeta||_{L^\infty(\R^d)} +
  C(T)\delta^{\frac27-s}\Big( ||\zeta||_{L^\infty(\R^d)} + ||\zeta||_{L^1(\R^d)} \Big).\label{eq:stochastic-final-11}
 \end{align} 
Now we combine \eqref{eq:stochastic-final-10} and \eqref{eq:stochastic-final-11} to obtain
 \begin{align*}
    \sup_{|y| \le \delta}& \E \Big[\int_{\R^d}  \big|u_\eps(t,x+y)-u_\eps(t,x)\big|\zeta(x)\,dx\Big] \notag \\
   &  \le C(T)\,\Bigg[ \delta^{r-s+r'}\E\Big[||u_0||_{B_{1, \infty}^{s'}(\R^d)}\Big] + \delta^{r+\frac27-s}\big( ||\zeta||_{L^\infty(\R^d)} + ||\zeta||_{L^1(\R^d)}\big)\Bigg] 
   + C_2\, \delta^r \E \bigg[||u_\eps(t,\cdot)||_{L^1(\R^d)}\bigg].
 \end{align*}
Setting $r'=s=\frac27$, one gets, for any $r<\frac27$ and $s'>\frac27$,  
  \begin{align*}
    \sup_{|y| \le \delta} \E \Big[\int_{\R^d} & \big|u_\eps(t,x+y)-u_\eps(t,x)\big|\zeta(x)\,dx\Big] \notag \\
   &  \le C(T)\delta^{r}\,\Bigg[ \E\Big[||u_0||_{B_{1, \infty}^{s'}(\R^d)}\Big] + ||\zeta||_{L^\infty(\R^d)} + ||\zeta||_{L^1(\R^d)}\Bigg] 
   + C_2\, \delta^r \E \bigg[||u_\eps(t,\cdot)||_{L^1(\R^d)}\bigg].
 \end{align*}
Let $K_{R}=\{x: |x| \le R\}$. Choose $\zeta \in \mathcal{K}$ such that $\zeta(x)=1$ on $K_R$. Then,
  for $r < \frac{2}{7}$, we have
 \begin{align*}
   \sup_{|y| \le \delta} \E\Bigg[\int_{K_R} \big|u_\eps(t,x+y)-u_\eps(t,x)\big|\,dx\Bigg] \le C(T,R)\,\delta^r,
 \end{align*} 
which completes the proof.
\end{proof}

In view of the well-posedness results from \cite{BaVaWit_2014,BisMajVal}, 
we can finally claim  the existence of entropy solutions for \eqref{eq:levy_stochconservation_laws_spatial} 
that satisfies the fractional BV estimate in Theorem \ref{thm:fractional BV estimate}. In other words, we have the following theorem.

\begin{thm}
Suppose that the assumptions ~\ref{A2}, ~\ref{A3}, ~\ref{A4}, ~\ref{B3}, and ~\ref{B4} hold and the initial data $u_0$ belong to the Besov space $B^{\mu}_{1, \infty} (\R^d)$ for some $\mu \in (\frac27,1)$ and
$ \E \Big[\norm{u_0}^2_{L^2(\R^d)} + \norm{u_0}_{L^1(\R^d)}\Big] < +\infty$.
Then given such initial data $u_0$, there exists an entropy solution of \eqref{eq:levy_stochconservation_laws_spatial} such that for any $t\ge0$,
\begin{align*}
 \E \Big[\norm{u(t,\cdot)}^2_{L^2(\R^d)} \Big] < +\infty.
\end{align*}
Moreover, there exists a constant $C_T^R$ such that, for any $0< t < T$,
\begin{align*}
\sup_{|y| \le \delta} \E\Bigg[\int_{K_R} \big|u(t,x+y)-u(t,x)\big|\,dx\Bigg] \le C_T^R \,\delta^r,
\end{align*}
for some $r \in (0, \frac27)$ and $K_{R}:=\{x: |x| \le R\}$.
\end{thm}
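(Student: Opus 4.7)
The plan is to combine the vanishing-viscosity existence machinery already developed in \cite{BaVaWit_2014,BisMajVal} with the uniform fractional BV bound established in Theorem \ref{thm:fractional BV estimate}, and then pass the bound to the limit by lower semicontinuity. First I would fix $u_0\in L^1(\R^d)\cap L^2(\R^d)\cap B^{\mu}_{1,\infty}(\R^d)$ with $\mu\in(\tfrac{2}{7},1)$, satisfying the integrability hypothesis, and consider the viscous regularization \eqref{eq:stability-3}. Under the assumptions \ref{A2}, \ref{A3}, \ref{B31}, \ref{B3}, \ref{B4}, and \ref{B5} (and by strengthening \ref{A4}, \ref{A5} only insofar as they are implied by \ref{B31}, \ref{B3}), the arguments in \cite{BaVaWit_2014,BisMajVal} produce a unique predictable weak solution $u_\eps\in N_w^2(0,T;H^1(\R^d))$ with the standard a priori energy estimate $\sup_{t\in[0,T]}\E[\|u_\eps(t)\|_2^2]+\eps\int_0^T\E[\|\nabla u_\eps\|_2^2]\,ds+\int_0^T\E[\|\nabla G(u_\eps)\|_2^2]\,ds\le C$, as well as a uniform-in-$\eps$ bound on $\E[\|u_\eps(t)\|_1]$ (the $x$-dependent extension of the first part of Theorem \ref{thm:bv-viscous}, cf.\ Appendix).

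Next I would invoke Theorem \ref{thm:fractional BV estimate} to obtain, for every $R>0$ and some $r\in(0,\tfrac{2}{7})$,
\begin{equation*}
\sup_{\eps>0}\sup_{|y|\le\delta}\E\Big[\int_{K_R}|u_\eps(t,x+y)-u_\eps(t,x)|\,dx\Big]\le C(T,R)\,\delta^{r},\qquad 0<t<T,
\end{equation*}
uniformly in $\eps$. Combined with the uniform $L^2$ bound this gives compactness in space, while the equation \eqref{eq:stability-3} itself, together with the jump and Brownian noise terms, yields (weak) time-regularity as in \cite{BaVaWit_2014,BisMajVal}. By the Young-measure/narrow compactness argument of those papers, one extracts a subsequence converging in the Young-measure sense to a generalized entropy solution $u(t,x,\varsigma)$ of \eqref{eq:levy_stochconservation_laws_spatial} satisfying the entropy inequality of Definition \ref{defi: young_stochentropsol}. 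The uniqueness/reduction theorem of \cite{BaVaWit_2014,BisMajVal} then forces the Young measure to be a Dirac mass, so that $u_\eps\to u$ in $L^p_{\mathrm{loc}}$ for $p\in[1,2)$, and $u$ is the unique entropy solution of \eqref{eq:levy_stochconservation_laws_spatial}; passing to the limit in the energy estimate gives $\E[\|u(t,\cdot)\|_2^2]<+\infty$ for a.e.\ $t$.

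Finally, the fractional BV inequality is preserved by this limit: the map
\begin{equation*}
v\longmapsto \E\Big[\int_{K_R}|v(x+y)-v(x)|\,dx\Big]
\end{equation*}
is convex and lower semicontinuous on $L^1(\Omega\times K_R')$ for every $R'>R+|y|$, so by Fatou's lemma applied to the a.s.\ convergent subsequence, together with the uniform bound above,
\begin{equation*}
\sup_{|y|\le\delta}\E\Big[\int_{K_R}|u(t,x+y)-u(t,x)|\,dx\Big]\le\liminf_{\eps\to 0}\sup_{|y|\le\delta}\E\Big[\int_{K_R}|u_\eps(t,x+y)-u_\eps(t,x)|\,dx\Big]\le C_T^R\,\delta^{r}.
\end{equation*}
The main obstacle, as usual in this setting, is the passage to the limit in the nonlinear and nonlocal terms—precisely the noise-noise interaction handled via the generalized entropy formulation in \cite{BaVaWit_2014,BisMajVal}. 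Once that reduction is in hand, the fractional BV bound transfers to $u$ essentially for free through Fatou.
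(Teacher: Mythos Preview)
Your proposal is correct and follows exactly the approach the paper takes: the paper does not give a separate proof of this theorem but simply states it as a consequence of the well-posedness results from \cite{BaVaWit_2014,BisMajVal} together with the uniform-in-$\eps$ fractional BV bound of Theorem~\ref{thm:fractional BV estimate}, with the bound passing to the vanishing-viscosity limit by lower semicontinuity. Your write-up supplies the natural details (energy bounds, Young-measure compactness, reduction to a Dirac, and Fatou/lower semicontinuity for the translate functional) that the paper leaves implicit.
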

  

\section{Appendix}
\label{appendix}

For the convenience of the reader, we include the proof of the first part of the Theorem~\ref{thm:bv-viscous}, that are 
frequently used in this paper. In what follows, we give a proof of such estimate for a slightly general equation 
\eqref{eq:levy_stochconservation_laws_spatial}, where
noise coefficients depend explicitly on the spatial position $x$.

As we have seen from \cite{BaVaWit_2014,BisMajVal} that under natural assumptions on initial data, flux 
functions, noise coefficients, and the fact that $A: \R \goto \R$ is a nondecreasing Lipschitz continuous function, 
viscous equation \eqref{eq:levy_stochconservation_laws_spatial} has a weak solution $u_\eps$ 
and moreover \eqref{bounds:a-priori-viscous-solution} holds. To that context, under additional assumption on the   
initial data, $u_0 \in L^1(\Omega\times \R^d)$, we show that for fixed $\eps>0$, $u_\eps \in L^1(\Omega\times \Pi_T)$. 
To do this, we proceed as follows: let  us consider a convex, even, approximation of the absolute-value function 
$\beta_\xi$ defined as in Section \ref{sec:tech}: Remark \ref{beta}, \eqref{beta_1} and \eqref{beta_2}.
Then, by applying 
It\^{o}-L\'{e}vy formula to  $\int_{\R^d}\beta_\xi(u_\eps(t,x))\,dx$, we conclude 
\begin{align}
 & \E\Big[ \int_{\R^d} \beta_{\xi}(u_\eps(t,x))\,dx\Big] + \E\Big[ \int_0^t \int_{\R^d} \beta_\xi^{\prime\prime}(u_\eps(s,x)) 
 \big( |\grad G(u_\eps(s,x))|^2 + \eps |\grad u_\eps(s,x)|^2 \big) \,dx \Big] \notag \\
 & \le  \E\Big[ \int_{\R^d} \beta_\xi(u_0(x))\,dx\Big]
 - \E \Big[  \int_0^t \int_{\R^d}  \beta_\xi^{\prime\prime}(u_\eps(s,x))  f(u_\eps(s,x))\cdot \grad u_\eps(s,x) \,dx\,ds\Big] \notag \\
  & \quad  + \E\Big[ \int_0^t \int_{\R^d} \int_{|z|>0}\int_{0}^1 (1-\lambda) \eta^2(x,u_\eps(s,x);z)
  \beta_\xi^{\prime\prime}\big(u_\eps(s,x)+ \lambda \eta(x,u_\eps(s,x);z)\big)\,d\lambda\,m(dz)\,dx\,ds\Big] \notag \\
    &  \hspace{2cm}+ \frac{1}{2} \E\Big[ \int_0^t \int_{\R^d} \sigma^2(x,u_\eps(s,x)) \beta_\xi^{\prime\prime}(u_\eps(s,x))
    \,dx\,ds\Big]. \label{esti:l1-ness-0}
\end{align}
Since $\beta_\xi$  is a convex function, we have from \eqref{esti:l1-ness-0}
\begin{align}
 & \E\Big[ \int_{\R^d} \beta_{\xi}(u_\eps(t,x))\,dx\Big]- \E\Big[ \int_{\R^d} \beta_\xi(u_0(x))\,dx\Big] \notag \\
 & \quad \le - \E \Big[  \int_0^t \int_{\R^d}  \beta_\xi^{\prime\prime}(u_\eps(s,x))  f(u_\eps(s,x))\cdot \grad u_\eps(s,x) \,dx\,ds\Big] \notag \\
  & \quad  + \E\Big[ \int_0^t \int_{\R^d} \int_{|z|>0}\int_{0}^1 (1-\lambda) \eta^2(x,u_\eps(s,x);z)
  \beta_\xi^{\prime\prime}\big(u_\eps(s,x)+ \lambda \eta(x,u_\eps(s,x);z)\big)\,d\lambda\,m(dz)\,dx\,ds\Big] \notag \\
    &  \hspace{2cm}+ \frac{1}{2} \E\Big[ \int_0^t \int_{\R^d} \sigma^2(x,u_\eps(s,x)) \beta_\xi^{\prime\prime}(u_\eps(s,x))
    \,dx\,ds\Big] \notag \\
    & := \mathcal{A}_1(\eps,\xi) + \mathcal{A}_2(\eps,\xi) + \mathcal{A}_3(\eps,\xi).\label{esti:l1-ness-1}
\end{align}
Next, we estimate each of the above terms separately. 
Let us first remark that a simple application of chain-rule implies that $\mathcal{A}_1(\eps,\xi)=0$.
We now move on to the  term $\mathcal{A}_2(\eps,\xi)$. 
In view of assumptions \ref{B3}, and \ref{B4} along with \eqref{eq:approx to abosx}, similar to the estimation 
 $\mathcal{G}$ in Section \ref{sec:apriori+existence} yields
 \begin{align*}
  0 \leq &\eta^2(x,u_\eps(s,x);z)
  \beta_\xi^{\prime\prime}\big(u_\eps(s,x)+ \lambda \eta(x,u_\eps(s,x);z)\big)
  \\
  \leq & (\lambda^*)^2(1 \wedge |z|^2) | u_\eps(s,x)|^2
  \beta_\xi^{\prime\prime}\big(u_\eps(s,x)+ \lambda \eta(x,u_\eps(s,x);z)\big)
  \\
  \leq & \frac{(\lambda^*)^2(1 \wedge |z|^2)}{(1-\lambda\lambda^*)^2} | u_\eps(s,x)+ \lambda \eta(x,u_\eps(s,x);z)|^2
  \beta_\xi^{\prime\prime}\big(u_\eps(s,x)+ \lambda \eta(x,u_\eps(s,x);z)\big)
  \\
  \leq & 4\frac{(\lambda^*)^2(1 \wedge |z|^2)}{(1-\lambda\lambda^*)^2}  \beta_\xi\big(u_\eps(s,x)+ \lambda \eta(x,u_\eps(s,x);z)\big)
  \leq  4\frac{(\lambda^*)^2(1 \wedge |z|^2)}{(1-\lambda\lambda^*)^2}  \beta_\xi\big(|u_\eps(s,x)|+ |\eta(x,u_\eps(s,x);z)|\big)
\\
  \leq & 4\frac{(\lambda^*)^2(1 \wedge |z|^2)}{(1-\lambda\lambda^*)^2}  \beta_\xi\big((1+\lambda^*)|u_\eps(s,x)|\big)
    \leq  4(1 \wedge |z|^2)\frac{(\lambda^*)^2(1+\lambda^*)^2}{(1-\lambda\lambda^*)^2}  \beta_\xi\big(u_\eps(s,x)\big),
\end{align*}
and this implies that 
\begin{align}
\big|\mathcal{A}_2(\eps,\xi)\big|  
  \le&
C\E\Big[ \int_0^t \int_{\R^d} \int_{|z|>0}(1 \wedge |z|^2)\,m(dz)  \beta_\xi\big(u_\eps(s,x)\big)\,dx\,ds\Big]
\leq 
C\E\Big[ \int_0^t \int_{\R^d}  \beta_\xi\big(u_\eps(s,x)\big)\,dx\,ds\Big]
 . \label{esti:a3-l1-ness} 
\end{align}
Again, we use assumption \ref{B31} to conclude 
\begin{align}
  \big|\mathcal{A}_3(\eps,\xi)\big|  \leq 
C\E\Big[ \int_0^t \int_{\R^d}  \beta_\xi\big(u_\eps(s,x)\big)\,dx\,ds\Big]. \label{esti:a4-l1-ness} 
\end{align}
Thus, combining all the above estimates \eqref{esti:a3-l1-ness}-\eqref{esti:a4-l1-ness} in \eqref{esti:l1-ness-1}, we arrive at
\begin{align*}
 \E\Big[ \int_{\R^d} \beta_{\xi}(u_\eps(t,x))\,dx\Big] 
 \leq C\int_0^t\E\Big[  \int_{\R^d}  \beta_\xi\big(u_\eps(s,x)\big)\,dx\Big]\,ds + \E\Big[ \int_{\R^d} \beta_\xi(u_0(x))\,dx\Big], 
\end{align*}
and this implies 
\begin{align}
\E\Big[  \int_{\R^d}  \beta_\xi\big(u_\eps(t,x)\big)\,dx\Big] \leq C
\E\Big[ \int_{\R^d} \beta_\xi(u_0(x))\,dx\Big].
 \label{eq:final-Bv-estimate}
\end{align}
Passing to the limit with respect to $\xi$ yields
\begin{align}
\label{L1-bound}
\E \Big[\int_{\R^d} \big|u_\eps(t,x)\big|\,dx\Big] \le C \E \Big[\int_{\R^d} \big|u_0(x)\big|\,dx \Big].
 \end{align}
 This implies that, $u_\eps \in L^1(\Omega \times \Pi_T)$, for every fixed $\eps>0$.

\medskip

Finally, we finish this section by introducing a special class of functions, which plays a pivotal 
role in our analysis. To that context, let us define the set $\mathcal{K}$ consisting of non-zero 
$\zeta \in C^2(\R^d)\cap L^1(\R^d)\cap L^{\infty}(\R^d)$ for which there is a constant $C$ such that 
$|\grad \zeta(x)| \le C \zeta(x)$, and $|\Delta \zeta(x)| \le C \zeta(x)$. Then we have the following Lemma:

\begin{lem}
\label{imp}
Let $\zeta \in \mathcal{K}$ be any element. Then there exists ${\lbrace \zeta_R \rbrace}_{R>1} \subset C_c^{\infty}(\R^d)$ such that
\begin{align*}
\zeta_R \mapsto \zeta, \grad \zeta_R \mapsto \grad \zeta, \,\,\text{and}\,\,  \Delta \zeta_R \mapsto \Delta \zeta\,\,\text{pointwise in}\,\, \R^d, \,\, \text{as}\,\, R \mapsto \infty
\end{align*}
\end{lem}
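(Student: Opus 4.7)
The plan is to construct $\zeta_R$ by combining a smooth spatial cutoff with a mollification whose scale tends to zero as $R\to\infty$. Fix once and for all a function $\chi\in C_c^\infty(\R^d)$ with $0\le\chi\le 1$, $\chi\equiv 1$ on the ball $\{|x|\le 1\}$ and $\chi\equiv 0$ outside $\{|x|\le 2\}$, and set $\chi_R(x):=\chi(x/R)$. Let $\{\rho_{\varepsilon}\}_{\varepsilon>0}$ be a standard non-negative mollifier family on $\R^d$ with $\mathrm{supp}(\rho_\varepsilon)\subset\{|y|\le\varepsilon\}$ and $\int\rho_\varepsilon=1$. I would then define
\[
\zeta_R \;:=\; \rho_{1/R}\con(\chi_R\,\zeta).
\]
Since $\chi_R\zeta$ is compactly supported in $\{|x|\le 2R\}$ and $\rho_{1/R}\in C_c^\infty(\R^d)$, the convolution $\zeta_R$ belongs to $C_c^\infty(\R^d)$ with support contained in $\{|x|\le 2R+1\}$.

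For the pointwise convergence, fix an arbitrary $x\in\R^d$. For every $R$ with $R\ge |x|+2$, the ball $\{y:|x-y|\le 1/R\}$ is contained in $\{y:|y|\le R\}$, so $\chi_R\equiv 1$ on this ball and, in particular, both $\grad\chi_R$ and $\Delta\chi_R$ vanish there. Since $\zeta\in C^2(\R^d)$, one may commute the mollifier with derivatives ($\partial^\alpha(\rho_{1/R}\con g)=\rho_{1/R}\con\partial^\alpha g$ for $g\in C^2_c$), and the vanishing of $\grad\chi_R$ and $\Delta\chi_R$ on the support of $\rho_{1/R}(x-\cdot)$ leaves only the term in which the derivative falls on $\zeta$. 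Hence for all such $R$,
\[
\zeta_R(x)=(\rho_{1/R}\con\zeta)(x),\qquad \grad\zeta_R(x)=(\rho_{1/R}\con\grad\zeta)(x),\qquad \Delta\zeta_R(x)=(\rho_{1/R}\con\Delta\zeta)(x).
\]
The continuity of $\zeta,\grad\zeta,\Delta\zeta$ at $x$, together with the standard property that mollification of a continuous function converges pointwise to that function at each point of continuity, then yields $\zeta_R(x)\to\zeta(x)$, $\grad\zeta_R(x)\to\grad\zeta(x)$ and $\Delta\zeta_R(x)\to\Delta\zeta(x)$ as $R\to\infty$.

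The argument is essentially routine; the only care required is the coordination of the two small parameters (the cutoff scale $R$ and the mollifier scale $1/R$) so that for each fixed $x$ the mollifier support is eventually contained in the region where $\chi_R\equiv 1$. I do not anticipate a substantive obstacle. Note that the structural bounds $|\grad\zeta|\le C\zeta$ and $|\Delta\zeta|\le C\zeta$ built into the class $\mathcal{K}$ are not needed for the pointwise convergence itself; they are used later (in the passage $R\to\infty$ in Section~\ref{sec:frac}) to produce the domination that justifies removing the cutoff inside the integrals, and indeed one easily verifies from the same construction that $|\grad\zeta_R|$ and $|\Delta\zeta_R|$ inherit envelopes of the form $C(\rho_{1/R}\con\zeta)$ plus terms that are uniformly controlled by $R^{-1}\|\grad\chi\|_\infty\,\rho_{1/R}\con\zeta$ and similar, so that the class behaviour is preserved under the approximation.
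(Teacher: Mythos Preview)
Your proof is correct and follows essentially the same idea as the paper's: a scaled spatial cutoff combined with mollification. The paper dispenses with the mollification in one line (``modulo a mollification step, we can assume $\zeta\in C^\infty$'') and then simply sets $\zeta_R(x)=\zeta(x)\,\eta(x/R)$ with $\eta\in C_c^\infty$, $0\le\eta\le1$, $\eta(0)=1$, whereas you fold both operations into a single formula $\zeta_R=\rho_{1/R}\con(\chi_R\zeta)$ and work out the interaction of the two scales explicitly; the underlying mechanism is the same.
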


\begin{proof}
Note that, modulo a mollification step, we can assume that $\zeta \in C^{\infty}(\R^d)$. Let $\eta \in C_c^{\infty}(\R^d)$ be such that $0\le \eta \le 1$, and $\eta(0)=1$. Let us define $\zeta_R(x)= \zeta(x)\eta(x/R)$. Then a straightforward computation yields
\begin{align*}
\grad \zeta_R(x) &= \grad \zeta(x) \eta(x/R) + \frac1R \zeta(x) \grad \eta(x/R), \\
\Delta \zeta_R(x) &= \Delta \zeta(x) \eta(x/R) + \frac{1}{R^2} \zeta(x) \Delta \eta(x/R)
+  \frac2R \grad \zeta(x) \grad \eta(x/R).
\end{align*}
Taking limit as $R \mapsto \infty$ concludes the proof.
\end{proof}

\end{document}